\newtheorem{theorem}{Theorem}[section]
\newtheorem{proposition}[theorem]{Proposition}
\newtheorem{lemma}[theorem]{Lemma}
\newtheorem{corollary}[theorem]{Corollary}
\theoremstyle{definition}
\renewcommand{\leq}{\leqslant}
\renewcommand{\geq}{\geqslant}
\newcommand{\F}{\mathbb F}
\newcommand{\E}{\mathbb E}
\newcommand{\K}{\mathbb K}
\newcommand{\bft}{\mathbf t}
\newcommand{\bfx}{\mathbf x}
\newcommand{\ad}{{\rm ad}}
\newcommand{\g}{\mathfrak g}
\newcommand{\fracfield}[1]{\mbox{\rm Frac}(#1)}
\newcommand{\height}[1]{\mbox{ht}(#1)}
\newcommand{\spec}[1]{\mbox{Spec}(#1)}
\newcommand{\der}[1]{\mbox{\rm Der}(#1)}
\begin{document}

\title[The center of universal enveloping algebras]{The center of the universal enveloping algebras of small-dimensional nilpotent Lie algebras in prime characteristic}
\author{Vanderlei Lopes de Jesus}
\address[Lopes de Jesus]{Departamento de Matem\'atica\\
Instituto de Ci\^encias Exatas\\
Universidade Federal de Minas Gerais\\
Av.\ Ant\^onio Carlos 6627\\
Belo Horizonte, MG, Brazil. Email:
{\rm\texttt{vanderleilopesbh@gmail.com}}}

\author{Csaba Schneider}
\address[Schneider]{Departamento de Matem\'atica\\
Instituto de Ci\^encias Exatas\\
Universidade Federal de Minas Gerais\\
Av.\ Ant\^onio Carlos 6627\\
Belo Horizonte, MG, Brazil. Email:
{\rm\texttt{csaba@mat.ufmg.br}}, URL:
{\tt
  {https://schcs.github.io/WP/}}}

\begin{abstract}
    We describe the centers of the universal enveloping algebras of nilpotent Lie algebras of dimension at most six over fields of prime characteristic.  If the characteristic is not smaller than the nilpontency class, then the center is the integral closure of the algebra generated over the $p$-center by the same generators that also occur in characteristic zero. 
    Except for three examples (two of which are standard filiform), this algebra is already integrally closed and hence it coincides with the center. 
    In the case of these three exceptional algebras, the center has further generators. Then we show that the center of the universal enveloping algebra 
    of the algebras investigated in this paper is isomorphic to the Poisson center (the algebra of invariants under the adjoint representation). This shows that 
    Braun's conjecture is valid for this class of Lie algebras.
\end{abstract}

\date{5 November 2021}
\keywords{Lie algebras, nilpotent Lie algebras, 
  universal enveloping algebras, center, Poisson center, invariant ring,
  Poincar\'e--Birkhoff--Witt Theorem}

\subjclass[2020]{17B35, 17B30, 16U70}

\maketitle

\section{Introduction}\label{section1}

The aim of this paper is to calculate, as far as possible, explicit generators for the 
centers of the universal enveloping algebras of small-dimensional nilpotent Lie algebras
over fields of prime characteristic. Our objective is to determine such generators for
nilpotent Lie algebras of dimension less than or equal to 6 in characteristic $p$ where $p$ is not smaller than the nilpotency class.

This work is motivated partly by Braun's conjecture which states that if 
$\g$ is a finite-dimensional nilpotent Lie algebra, then the center $Z(\g)$ of 
the universal enveloping algebra is isomorphic, as a commutative algebra, to the 
algebra of polynomial invariants associated to the adjoint representation of $\g$ (see~\cite{ben-shimol}). 
This isomorphism, often referred to as the Duflo isomorphism, is well-known in characteristic zero over algebraically closed fields; 
see~\cite{Duflo,Kontsevich,dixmier96}. For finite dimensional nilpotent Lie algebras, 
the isomorphism between 
the two commutative algebras is given by the symmetrization map; see~\cite[Proposition~4.8.12]{dixmier96}. Ben-Shimol~\cite{ben-shimol}
verified Braun's conjecture for the nilpotent radicals of the Borel subalgebras of 
the classical simple Lie algebras of type $A$, $B$, $C$ and $D$.

In this paper we consider nilpotent Lie algebras of dimension at most $6$ which were classified by several authors
(see for example~\cite{deg,cgs12} and the references therein). Our methodology relies on the known structure theorems describing  the center $Z(\g)$ and the division algebra $D(\g)$ of quotients of the universal enveloping algebra $U(\g)$
(see Section~\ref{sec:center}) as well as on commutative algebra, more specifically, the theory of Cohen--Macaulay rings and 
integrally closed domains (see Section~\ref{sec:ca}). We were heavily influenced by earlier work of Ooms, 
Braun and Ben-Shimol~\cite{Ooms2,braun,braun2,braun-vernik,ben-shimol}.

Let $\g$ be a nilpotent Lie algebra of dimension at most six defined over a field of characteristic $p$ such that $p$ is not 
smaller than the nilpotency class of $\g$. 
Our first main result (Theorem~\ref{th1} in Section~\ref{sec:th1}) states that $Z(\g)$ is the integral closure of $Z_p(\g)[z_1,\ldots,z_k]$ where $Z_p(\g)$ 
is the the $p$-center of $U(\g)$ (see Section~\ref{sec:center}) and the $z_i$
are explicitly determined generators. The generators $z_i$ in this theorem are the same  
as the ones in characteristic zero (see~\cite{Ooms1,Ooms2}). 
Except for three algebras (namely, $\g_{5,5}$, $\g_{6,18}$ e $\g_{6,25}$ (two of which are standard filiform), see Section~\ref{sec:th1}), the center $Z(\g)$ 
actually coincides with $Z_p(\g)[z_1,\ldots,z_k]$. However, in the case of these three exceptional algebras, the algebra 
$Z_p(\g)[z_1,\ldots,z_k]$ is not integrally closed in its field of fractions, and hence it cannot be the whole center by Zassehaus' 
well-known theorem~\cite[Lemma~6]{zassenhaus53}. If $\g$ is one these three exceptional algebras, computations with Macaulay2 show
that the generating set of $Z(\g)$ depends on the characteristic $p$ and gets more and more complicated as $p$ grows. 

Having determined explicit generators for $Z(\g)$ (modulo integral closure), we consider polynomial invariants of $\g$ corresponding to 
the adjoint representations. The algebra $P(\g)$ of such invariants is often referred to as the Poisson center of $\g$. 
We review some properties of $P(\g)$ in Section~\ref{sec:pol_inv} and then in Section~\ref{sec:th2} we prove that for the algebras 
considered in the paper, the center $Z(\g)$ is isomorphic to the Poisson center $P(\g)$ (Theorem~\ref{th2}). Hence we verify Braun's conjecture for 
this class of algebras. 

For the reader's convenience, in the final Section~\ref{sec:tables} we summarize some properties of the nilpotent Lie algebras of dimension at most six in the form of tables.

\subsection*{Acknowledgment} The first author was financially 
supported by a PhD scholarship awarded by CNPq (Brazil). The second author acknowledges the financial support of  the CNPq projects 
\textit{Produtividade em Pesquisa} (project no.: 308212/2019-3)  
and \textit{Universal} (project no.: 421624/2018-3).  We thank Lucas Calixto, Letterio Gatto, Tiago Macedo, and Renato Vidal Martins  
for their  valuable comments.

\section{Some commutative algebra}\label{sec:ca}

In this section we review some facts, most of which are well-known, of commutative algebra that will be used in the rest of the paper.

\subsection{Inseparable extensions}
If $\mathbb{F}\subseteq \mathbb{E}$ is an extension of fields, then an $\F$-algebraic element $\alpha\in \mathbb{E}$
is {\em separable} over $\mathbb{F}$ if its minimal polynomial $\min_{\alpha}(x)\in\F[x]$ 
has no multiple roots in its splitting field over $\F$;  otherwise $\alpha$ is {\em inseparable}. An algebraic extension $\mathbb{F}\subseteq \mathbb{E}$ is {\em separable}, if all $\alpha\in \mathbb{E}$ is separable over $\mathbb{F}$. If 
no element of  $\mathbb{E} \setminus \mathbb{F}$ is separable over $\mathbb{F}$, then  $\mathbb{E}$ is {\em purely inseparable} over  $\mathbb{F}$.  
Note that a trivial extension $\mathbb{F}\subseteq\mathbb{F}$ is viewed as purely inseparable. 

\begin{theorem}[\cite{Isaacs94}, Theorem~19.10]\label{Isaa1}
 The following statements are equivalent for an algebraic extension 
 $\mathbb{F}\subseteq \mathbb{E}$ of fields 
 of  characteristic $p>0$.
 \begin{enumerate}
 \item $\mathbb{E}$ is purely inseparable over $\mathbb{F}$.
 \item For all $\alpha\in \mathbb{E}$, there exists $n\geq 0$ such that 
 $\alpha^{p^{n}}\in \mathbb{F}$.
 \item The minimal polynomial of all elements of $\mathbb{E}$ has the form $x^{p^{n}}-a$ for some integer $n\geq0$ and $a\in \mathbb{F}$.
 \end{enumerate}
\end{theorem}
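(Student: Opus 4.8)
The plan is to establish the three equivalences by proving the cyclic chain $(3)\Rightarrow(2)\Rightarrow(1)\Rightarrow(3)$, since two of the three links are short and isolate essentially all of the content in a single implication.

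The implication $(3)\Rightarrow(2)$ is immediate: if $\alpha\in\mathbb{E}$ has minimal polynomial $x^{p^n}-a$ with $a\in\mathbb{F}$, then $\alpha^{p^n}=a\in\mathbb{F}$, so the required $n$ is this very exponent. For $(2)\Rightarrow(1)$ I would exploit the ``freshman's dream'' identity $x^{p^n}-c^{p^n}=(x-c)^{p^n}$, valid in any commutative ring of characteristic $p$. Given $\alpha\in\mathbb{E}\setminus\mathbb{F}$, hypothesis $(2)$ yields $\alpha^{p^n}=a\in\mathbb{F}$ for some $n$ (with $n\geq 1$, since $n=0$ would give $\alpha\in\mathbb{F}$). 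The minimal polynomial $\min_{\alpha}(x)$ then divides $x^{p^n}-a=(x-\alpha)^{p^n}$ in a splitting field, hence equals $(x-\alpha)^{k}$ for some $1\leq k\leq p^n$. Because $\alpha\notin\mathbb{F}$ we cannot have $k=1$ (that would force $-\alpha\in\mathbb{F}$), so $k\geq 2$ and $\alpha$ is a repeated root of its own minimal polynomial, i.e.\ inseparable. Thus no element of $\mathbb{E}\setminus\mathbb{F}$ is separable, which is exactly $(1)$.

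The implication $(1)\Rightarrow(3)$ carries the real weight, and here I would first record the standard structural lemma for irreducible polynomials in characteristic $p$: every irreducible $f\in\mathbb{F}[x]$ can be written as $f(x)=g(x^{p^m})$ for a unique $m\geq 0$ and an irreducible $g$ with $g'(y)\neq 0$, and any such $g$ is separable. This follows by repeatedly observing that an irreducible polynomial with vanishing derivative must be a polynomial in $x^{p}$ (all its exponents being divisible by $p$), that the procedure of peeling off powers of $p$ terminates for degree reasons, and that an irreducible polynomial is separable exactly when its derivative is nonzero. Granting the lemma, take $\alpha\in\mathbb{E}$ with $f=\min_{\alpha}$ and write $f(x)=g(x^{p^m})$. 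Then $\alpha^{p^m}$ is a root of the monic irreducible separable polynomial $g$, so $g$ is its minimal polynomial and $\alpha^{p^m}$ is separable over $\mathbb{F}$. By $(1)$ the only separable elements of $\mathbb{E}$ are those of $\mathbb{F}$, so $\alpha^{p^m}\in\mathbb{F}$, forcing $\deg g=1$, say $g(y)=y-a$. Hence $f(x)=x^{p^m}-a$, which is $(3)$.

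The main obstacle is precisely the structural lemma underpinning $(1)\Rightarrow(3)$: one must correctly transfer irreducibility of $f$ to irreducibility of the ``de-Frobeniused'' factor $g$, and confirm that $g$ really is the minimal polynomial of $\alpha^{p^m}$ rather than a proper divisor, so that separability of $g$ is genuinely inherited by $\alpha^{p^m}$. Some care is also needed with the degenerate cases $\alpha\in\mathbb{F}$, where $m=0$ and $\min_{\alpha}(x)=x-\alpha=x^{p^0}-\alpha$, so that the conclusion holds uniformly and in particular covers the trivial extension $\mathbb{F}\subseteq\mathbb{F}$ noted just before the theorem.
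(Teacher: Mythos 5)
Your argument is correct. Note that the paper does not prove this statement at all --- it is quoted verbatim from Isaacs (Theorem~19.10) as a known result --- so there is no in-paper proof to compare against; your cyclic chain $(3)\Rightarrow(2)\Rightarrow(1)\Rightarrow(3)$, with the ``freshman's dream'' factorization $x^{p^n}-\alpha^{p^n}=(x-\alpha)^{p^n}$ handling $(2)\Rightarrow(1)$ and the decomposition $f(x)=g(x^{p^m})$ of an irreducible polynomial into a separable part handling $(1)\Rightarrow(3)$, is exactly the standard textbook proof, and the two points you flag as requiring care (irreducibility and monicity pass from $f$ to $g$, so $g$ genuinely is the minimal polynomial of $\alpha^{p^m}$) are resolved just as you indicate.
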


\begin{corollary}[\cite{Isaacs94}, Corollaries~19.11-19.13]\label{cor:insep}
The following are valid for a field $\F$ of characteristic $p$.
\begin{enumerate}
\item Suppose that $\mathbb{E}=\mathbb{F}(\alpha)$ such that $\alpha^{p^{n}}\in \mathbb{F}$ for some $n\geq1$. Then $\mathbb{E}$ is purely inseparable over $\mathbb{F}$.
\item Suppose that $\mathbb{F}\subseteq \mathbb{E}$ is a purely inseparable extension. Then the following are true.\label{cw}
\begin{enumerate}
\item If $\mathbb{F}\subseteq \mathbb{K}\subseteq \mathbb{E}$ is a chain of field extensions, then $\mathbb{K}$ is purely inseparable over  $\mathbb{F}$ and $\mathbb{E}$ 
is purely inseparable over $\mathbb{K}$.
\item If $\dim_\F\mathbb{E}<\infty$, then $\dim_\F\mathbb{E}$ is a power of $p$. 
\end{enumerate}
\item Suppose that $\mathbb{F}\subseteq \mathbb{K}\subseteq \mathbb{E}$ 
is a chain of field extensions such that $\mathbb{F}\subseteq \mathbb{K}$ and $\mathbb{K}\subseteq \mathbb{E}$ are both purely inseparable. Then $\mathbb{E}$ is purely inseparable over $\mathbb{F}$. 
\end{enumerate}
\end{corollary}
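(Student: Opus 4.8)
The plan is to derive every part from Theorem~\ref{Isaa1}, relying chiefly on the equivalence of its conditions (1) and (2); throughout I keep in mind that \emph{purely inseparable} presupposes \emph{algebraic}, so in each part I must confirm algebraicity of the extension at hand, which follows from the transitivity of algebraicity (a tower of algebraic extensions is algebraic).

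For part (1) I would use the implication (2)$\,\Rightarrow\,$(1). Since $\alpha^{p^n}\in\F$, the element $\alpha$ is a root of $x^{p^n}-\alpha^{p^n}\in\F[x]$, hence algebraic, so $\E=\F(\alpha)=\F[\alpha]$ is finite over $\F$ and every $\beta\in\E$ is a polynomial $\beta=\sum_i c_i\alpha^i$ with $c_i\in\F$. Raising to the $p^n$-th power and invoking the Frobenius (freshman's dream), $\beta^{p^n}=\sum_i c_i^{p^n}(\alpha^{p^n})^i\in\F$, because $c_i^{p^n}$ and $\alpha^{p^n}$ already lie in $\F$. Thus condition (2) holds for every $\beta\in\E$, and $\E$ is purely inseparable over $\F$.

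Parts (2)(a) and (3) I would settle by the same passage through condition (2), with the arithmetic of exponents doing all the work. In (2)(a) any $\alpha$ in the intermediate field $\K$ already lies in $\E$, so some $\alpha^{p^n}\in\F$, giving pure inseparability of $\K/\F$; and any $\alpha\in\E$ satisfies $\alpha^{p^n}\in\F\subseteq\K$, giving pure inseparability of $\E/\K$. In (3), for $\alpha\in\E$ I would first use pure inseparability of $\E/\K$ to obtain some $\alpha^{p^n}\in\K$, then pure inseparability of $\K/\F$ to obtain $(\alpha^{p^n})^{p^m}=\alpha^{p^{n+m}}\in\F$; condition (2) then yields that $\E/\F$ is purely inseparable.

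Part (2)(b) is the one requiring a genuine structural input rather than exponent bookkeeping, and I expect it to be the only mild obstacle. I would argue by induction on $\dim_\F\E$. If $\E=\F$ the dimension is $p^0$; otherwise I choose $\alpha\in\E\setminus\F$, whose minimal polynomial over $\F$, by the equivalence (1)$\,\Leftrightarrow\,$(3) of Theorem~\ref{Isaa1}, has the form $x^{p^m}-a$, so that $[\F(\alpha):\F]=p^m$. By part (2)(a) the extension $\E/\F(\alpha)$ is again purely inseparable and of strictly smaller finite dimension, so by induction $[\E:\F(\alpha)]$ is a power of $p$; multiplicativity of degrees then gives $\dim_\F\E=[\E:\F(\alpha)]\,[\F(\alpha):\F]$, a power of $p$. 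The only point needing care is to invoke the precise shape of the minimal polynomial from Theorem~\ref{Isaa1}(3) so that the base-step degree is visibly a $p$-power; everything else is routine.
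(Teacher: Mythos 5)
Your proof is correct, and the paper itself offers no argument for this corollary --- it is simply cited from Isaacs --- so the natural benchmark is the standard textbook derivation from Theorem~\ref{Isaa1}, which is exactly what you give: parts (1), (2)(a) and (3) via the $\alpha^{p^{n}}\in\F$ criterion together with the Frobenius identity and the transitivity of algebraicity, and part (2)(b) by induction on the degree using the shape $x^{p^{m}}-a$ of the minimal polynomial and multiplicativity of degrees. The only points that needed care --- checking algebraicity before invoking the theorem, noting that $m\geq 1$ in the base step of (2)(b) so the induction genuinely descends, and phrasing the induction so the hypothesis may be applied over the new base field $\F(\alpha)$ --- are all handled, so nothing is missing.
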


\subsection{Integrally closed rings}
We review some results on integrally closed rings. Our standard reference
for this section is Matsumura's book~\cite{matsumura86}. For a domain $A$, we let 
$\fracfield{A}$ denote the field of fractions of $A$.
We denote by $\height I$ the height  of an ideal $I$ in a commutative ring.

\begin{lemma}[1.15 Lemma, \cite{ben-shimol}]\label{gauss}
  If $A$ is a domain, then a monic polynomial $f(x)$ is prime as an element of 
  $A[x]$ if and only if it is prime as an element of $\fracfield A[x]$.
\end{lemma}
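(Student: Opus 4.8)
The plan is to reduce everything to a single elementary fact: since $f$ is monic, Euclidean division of an arbitrary $g\in A[x]$ by $f$ can be carried out \emph{inside} $A[x]$, yielding $g=fq+r$ with $q,r\in A[x]$ and $\deg r<\deg f$, and no denominators are ever introduced. Because $f$ is monic, this is also the (unique) division of $g$ by $f$ in $\fracfield A[x]$, so the quotient and remainder are literally the same polynomials whether one works over $A$ or over $\fracfield A$. The consequence I would isolate as the key observation is this: for $g\in A[x]$ one has $f\mid g$ in $A[x]$ if and only if $f\mid g$ in $\fracfield A[x]$, since in both rings divisibility is equivalent to the vanishing of one and the same remainder $r\in A[x]$. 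I would also dispose of the degenerate case at the outset: a monic $f$ of degree $0$ is $f=1$, a unit in both rings and prime in neither, so the equivalence is trivial there; for $\deg f\geq 1$ the polynomial $f$ is nonzero and a non-unit in both $A[x]$ and $\fracfield A[x]$, so in each ring primality reduces to the divisibility property.

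Assuming $f$ is prime in $\fracfield A[x]$, I would check the defining property in $A[x]$. If $f\mid gh$ with $g,h\in A[x]$, then $f\mid gh$ in $\fracfield A[x]$, whence $f\mid g$ or $f\mid h$ in $\fracfield A[x]$ by primality there; the key observation immediately transfers this divisibility back to $A[x]$. Hence $f$ is prime in $A[x]$.

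For the converse, suppose $f$ is prime in $A[x]$ and take $a,b\in\fracfield A[x]$ with $f\mid ab$ in $\fracfield A[x]$. Here I would clear denominators, choosing nonzero $c,d\in A$ with $ca,\,db\in A[x]$. Then $f$ divides the product $(ca)(db)\in A[x]$ in $\fracfield A[x]$, and, since this product lies in $A[x]$, the key observation places the divisibility in $A[x]$ as well. Primality of $f$ in $A[x]$ then gives, say, $f\mid ca$ in $A[x]$, hence in $\fracfield A[x]$; as $c$ is a unit of $\fracfield A$ we conclude $f\mid a$ in $\fracfield A[x]$. Therefore $f$ is prime in $\fracfield A[x]$.

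The only step demanding genuine care—and the one I would treat as the heart of the argument—is the compatibility of division: that dividing by a \emph{monic} $f$ never forces the inversion of a coefficient and produces identical quotient and remainder in $A[x]$ and in $\fracfield A[x]$. This is precisely where monicity is indispensable; for a non-monic $f$ one would need to invert the leading coefficient, and the clean transfer of divisibility between $A[x]$ and $\fracfield A[x]$ would fail. Everything else is routine bookkeeping with units of $\fracfield A$ and with the fact that non-unit and nonzero status are preserved between the two rings.
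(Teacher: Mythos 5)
Your proof is correct and complete. Note that the paper itself does not prove this lemma at all: it is quoted as Lemma~1.15 of Ben-Shimol's paper and used as a black box, so there is no in-paper argument to compare against. Your route---reducing everything to the compatibility of division with remainder by a monic polynomial between $A[x]$ and $\fracfield{A}[x]$---is the natural one at this level of generality: since $A$ is only assumed to be a domain (not a UFD), content/Gauss-lemma arguments are unavailable, and monicity is precisely what makes the transfer work. The individual steps all check out. The key observation (for $g\in A[x]$, $f\mid g$ in $A[x]$ if and only if $f\mid g$ in $\fracfield{A}[x]$) rests on uniqueness of quotient and remainder for a monic divisor, which holds over any commutative ring: if $f(q_1-q_2)=r_2-r_1$ with $q_1\neq q_2$, the left-hand side has degree at least $\deg f$ because $f$ is monic, while the right-hand side has strictly smaller degree. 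The denominator-clearing in the converse direction is handled correctly ($f\mid ab$ in $\fracfield{A}[x]$ implies $f\mid (ca)(db)$ there, and the latter product lies in $A[x]$), and the unit bookkeeping (the degree-zero case, and the fact that the units of $A[x]$ are the units of $A$ when $A$ is a domain) is right.
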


\begin{lemma}\label{rr}
  Suppose that $A$ is a finitely-generated noetherian domain, let 
  $A\subseteq B$ be an integral extension of domains, and let 
  $\alpha\in B\setminus A$ be a root of a polynomial $f(x)\in A[x]$.
  \begin{enumerate} 
\item  If $f(x)$ is prime in $A[x]$, then $A[\alpha]\cong A[x]/(f(x))$.
\item If $A$ is factorial and  
$f(x)$ is irreducible in $A[x]$, then $A[\alpha]\cong A[x]/(f(x))$.
  \end{enumerate}
  \end{lemma}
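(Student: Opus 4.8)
The plan is to realize $A[\alpha]$ as a quotient of $A[x]$ and then to force the kernel to be exactly $(f(x))$ by a dimension count. First I would consider the evaluation homomorphism $\varphi\colon A[x]\to B$ determined by $\varphi(x)=\alpha$ and $\varphi|_A=\mathrm{id}$. Its image is precisely $A[\alpha]$, so $A[\alpha]\cong A[x]/I$ where $I=\ker\varphi$. Since $f(\alpha)=0$ we have $(f(x))\subseteq I$, and because $A[\alpha]$ is a subring of the domain $B$, the ideal $I$ is prime. Both statements therefore reduce to the single claim $I=(f(x))$.

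For part (1) I would compare heights and dimensions. As $f(x)$ is prime and nonzero, $(f(x))$ is a minimal prime over the principal ideal it generates, so Krull's principal ideal theorem gives that $(f(x))$ has height one in the noetherian ring $A[x]$. On the other hand, since $A\subseteq B$ is integral, $\alpha$ is integral over $A$; hence $A[\alpha]$ is a finitely generated $A$-module and $A\subseteq A[\alpha]$ is an integral extension, so $\dim A[\alpha]=\dim A=:d$. Now $A$ is a finitely generated domain over the base field, hence so are $A[x]$ and all of its domain quotients; for such rings the dimension formula $\height{\mathfrak{p}}+\dim(R/\mathfrak{p})=\dim R$ holds, and moreover $\dim A[x]=d+1$. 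Consequently $\dim\bigl(A[x]/(f(x))\bigr)=(d+1)-1=d$. Finally, $\varphi$ factors as a surjection $A[x]/(f(x))\twoheadrightarrow A[\alpha]$ of finitely generated domains with kernel $I/(f(x))$; were this kernel nonzero, the dimension of the quotient would drop strictly below $d$, contradicting $\dim A[\alpha]=d$. Hence $I=(f(x))$ and $A[\alpha]\cong A[x]/(f(x))$.

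For part (2) I would reduce to part (1). Since $A$ is factorial, Gauss's lemma shows that $A[x]$ is again factorial, and in a factorial domain every irreducible element is prime; thus $f(x)$ is prime in $A[x]$ and the conclusion follows from part (1).

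The main obstacle, and the place where the hypotheses are genuinely used, is the dimension bookkeeping: one must know that $A$ is a noetherian finitely generated domain over a field so that $A[x]$ satisfies $\dim A[x]=\dim A+1$ together with the catenary dimension formula, and one must invoke integrality to pin down $\dim A[\alpha]=\dim A$. Once these two equalities are in hand, the strict drop of dimension under a proper domain quotient does all the work. If one prefers to sidestep the dimension of $A[x]/(f(x))$, the same conclusion follows directly from $\height{I}=1$: any prime strictly containing the height-one prime $(f(x))$ would have height at least two, which is impossible.
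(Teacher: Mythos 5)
Your argument is correct and follows essentially the same route as the paper: both set up the evaluation map $A[x]\to A[\alpha]$, use Krull's principal ideal theorem to see that $(f(x))$ is a height-one prime inside the prime kernel, use integrality to get $\dim A[\alpha]=\dim A$, and conclude by a dimension count that the kernel is exactly $(f(x))$. The only cosmetic difference is that the paper works with the general inequality $\height{P}+\dim(R/P)\leq\dim R$ to force $\height{\ker\varphi}=1$, whereas you invoke the strict drop of dimension under a proper quotient of a finitely generated domain (equivalently, the catenary dimension formula); both are standard and your reduction of (2) to (1) via Gauss's lemma matches the paper verbatim.
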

  \begin{proof}
(1)  Let us consider the surjective homomorphism $\varphi: A[x]\to A[\alpha]$ defined by  
  $g(x)\mapsto g(\alpha)$ for all $g(x)\in A[x]$. Clearly, $(f(x))\subseteq \ker\varphi$. Since $f(x)$ is prime in $A[x]$, 
  the ideal $(f(x))$ is prime with height one (by Krull's Principal Ideal Theorem) contained in $\ker \varphi$. 
  In particular,
  $\height{\ker\varphi}\geq1$. Also $\ker\varphi$ is a prime ideal since $A[\alpha]$ is a domain and 
  in order to show that $\ker\varphi=(f(x))$ it suffices to show that 
  $\height{\ker\varphi}=1$. Since $A$ is finitely-generated and noetherian, its Krull dimension $\dim A$ is finite, and so, using standard commutative algebra, we can calculate that
  \begin{align*}
  \height{\ker\varphi}+\dim A&=\height{\ker\varphi}+\dim A[\alpha]= 
  \height{\ker \varphi}+\dim A[x]/\ker(\varphi) \\&\leq \dim A[x]=\dim A+1.
  \end{align*}
  This means that $\height{\ker \varphi}=1$, and so $\ker\varphi=(f(x))$ and also 
  $A[\alpha]\cong A[x]/(f(x))$.

  (2) This follows from statement~(1) since if $A$ is factorial, then 
  so is $A[x]$ and the irreducible element $f(x)\in A[x]$ is prime.
  \end{proof}

If $R\subseteq S$ is an extension of commutative rings, then an element $a\in S$ is said 
to be {\em integral} over $R$ (or $R$-integral) if it is a root of a nonzero monic polynomial in $R[x]$. An integral domain $R$ is said to be {\em integrally closed} 
(or {\em normal} by some authors such as Kemper~\cite{Kemper09}) if the $R$-integral elements of the fraction field 
$\fracfield R$ are precisely the elements of $R$.   

\begin{lemma}\label{corq}
If $A$ is an integrally closed domain, $p$ is a prime, and $\alpha\in A\setminus A^p$, then $x^{p}-\alpha$ 
is a prime polynomial in $A[x]$. 
  \end{lemma}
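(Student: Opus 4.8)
The plan is to reduce the problem to a question over the fraction field $K=\fracfield A$ and then invoke the classical irreducibility criterion for binomials. Since $x^p-\alpha$ is monic, Lemma~\ref{gauss} shows that $x^p-\alpha$ is prime in $A[x]$ if and only if it is prime in $K[x]$. As $K[x]$ is a principal ideal domain, a nonzero nonunit is prime exactly when it is irreducible, so the entire problem becomes: show that $x^p-\alpha$ is irreducible in $K[x]$.

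First I would record that $\alpha\neq 0$ (otherwise $\alpha=0^p\in A^p$), so that $x^p-\alpha$ is genuinely of degree $p$ and the binomial criterion applies. The key structural input, and the only place the hypothesis on $A$ enters, is that $\alpha$ is not a $p$-th power in $K$. Indeed, if $\alpha=\beta^p$ with $\beta\in K$, then $\beta$ is a root of the monic polynomial $x^p-\alpha\in A[x]$, hence $\beta$ is integral over $A$; since $A$ is integrally closed and $\beta\in\fracfield A$, this forces $\beta\in A$, and then $\alpha=\beta^p\in A^p$, contradicting $\alpha\in A\setminus A^p$. Thus $\alpha\notin K^p$.

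It then remains to prove the field-theoretic fact that, for a prime $p$ and $a\in K$ with $a\neq 0$ and $a\notin K^p$, the binomial $x^p-a$ is irreducible over $K$. I would argue by contradiction: suppose $x^p-\alpha=g(x)h(x)$ with $g,h\in K[x]$ monic of degrees $d$ and $p-d$ where $0<d<p$. Passing to an algebraic closure, every root $\beta_i$ of $x^p-\alpha$ satisfies $\beta_i^p=\alpha$, so the constant term $c=g(0)=(-1)^d\prod_{i\in S}\beta_i\in K$ is, up to sign, a product of $d$ such roots; taking $p$-th powers gives $c^p=\alpha^d$ up to a $p$-th power of a root of unity and a harmless sign, and hence $\alpha^d\in K^p$ (the characteristic $p$ case, where $x^p-\alpha=(x-\beta)^p$ is purely inseparable, is even cleaner and yields $\beta^d\in K$ directly). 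Since $0<d<p$ and $p$ is prime we have $\gcd(d,p)=1$, so a B\'ezout relation $sd+tp=1$ lets me write $\alpha=\alpha^{sd+tp}=(\alpha^d)^s(\alpha^t)^p\in K^p$, contradicting $\alpha\notin K^p$.

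The main obstacle is this last step: verifying the binomial irreducibility criterion cleanly, in particular controlling the root-of-unity factors and the sign when extracting $\alpha^d\in K^p$ from a hypothetical factorization, and treating the inseparable characteristic-$p$ case separately. Once $\alpha\notin K^p$ has been deduced from integral closedness, the remainder reduces entirely to this well-known criterion, which I would either cite or establish via the short argument sketched above for completeness.
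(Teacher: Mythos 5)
Your proposal is correct and takes essentially the same route as the paper: reduce primality of the monic polynomial to primality over $K=\fracfield A$ via Lemma~\ref{gauss}, use integral closedness to show $\alpha\notin K^p$, and then invoke irreducibility of the binomial $x^p-\alpha$ over $K$. The only difference is that the paper simply cites Lang's Theorem~9.1 for this last binomial criterion, whereas you sketch its proof directly; both are fine.
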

  \begin{proof}
  Let $\K=\fracfield A$ be the field of fractions of $A$ and set 
  $\K^p=\{a^p\mid a\in\K\}$.
  Let $\alpha$ be as in the lemma. We claim that $\alpha\not\in \K^{p}$.
  Assume that $\alpha\in \K^{p}$, and hence there exists $u\in \K$ such that $\alpha=u^{p}$. Thus $g(t)=t^{p}-\alpha$ is an integral equation for $u$ over $A$, and, as $A$ is integrally closed, this implies that $u\in A$, and so $\alpha=u^p\in A^{p}$, 
  which is a contradiction to the conditions of the lemma. Hence the claim 
  that $\alpha\not\in \K^p$ is valid. It follows 
  from~\cite[Theorem~9.1]{Lang02} that $x^{p}-\alpha$ is irreducible in $\K[x]$,
  and it follows also that $x^p-\alpha$ is prime in $\K[x]$. 
  Now Lemma~\ref{gauss} implies that the same polynomial is prime in $A[x]$. 
  \end{proof}

We rely on Serre's criterion to verify if a commutative ring is integrally closed. 
  Let $R$ be a commutative noetherian ring and let $k\geq0$ an integer. 
  The depth of an ideal $I\subseteq R$ is denoted by $\mbox{dt}(I)$. 
  \begin{enumerate}
  \item[$(\mbox{R}_{k})$] We say that $R$ satisfies Serre's condition $(\mbox{R}_{k})$
  if for all $P\in\spec R$ with $\height P\leq k$, the local ring $R_{P}$ 
  is regular. 
  \item[$(\mbox{S}_{k})$] We say that $R$ satisfies Serre's condition $(\mbox{S}_{k})$ 
  if $\mbox{dt}(R_{P})\geq \min\left\{k, \height P\right\}$ holds for all 
  $P\in\spec R$.
  \end{enumerate}
  
  Note that a noetherian ring is Cohen-Macaulay if and only if it satisfies Serre's condition $(\mbox{S}_i)$ for all $i\geq 0$. 

  \begin{theorem}[Serre's Criterion for Normality; Theorem~23.8~\cite{matsumura86}]
\label{serrecrit}    The following is true for a  noetherian domain $R$. 
    \begin{enumerate} 
      \item If $R$ satisfies Serre's condition $(\mbox{R}_{1})$ and $(\mbox{S}_{2})$, then $R$ is integrally closed.\label{Te13}
      \item If $R$ is Cohen-Macaulay, and satisfies Serre's condition $(\mbox{R}_{1})$
      then $R$ is integrally closed.
    \end{enumerate}
  \end{theorem}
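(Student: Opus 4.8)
The plan is to prove statement~(1) directly and then to deduce statement~(2) at once, since a Cohen--Macaulay ring satisfies $(\mbox{S}_i)$ for every $i$, as recalled just above, and in particular satisfies $(\mbox{S}_2)$. For~(1) I would reduce the normality of $R$ to two assertions about its localizations at height-one primes: that each such localization is integrally closed, and that $R$ equals the intersection of these localizations inside $\fracfield R$. Granting both, $R$ is an intersection of integrally closed subrings of the field $\fracfield R$; since any $x\in\fracfield R$ integral over $R$ satisfies the same monic equation over each of these subrings and hence lies in each of them, such an intersection is again integrally closed, and $R$ is normal.

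The first assertion uses only $(\mbox{R}_1)$. When $\height P=1$, condition $(\mbox{R}_1)$ makes $R_P$ a regular local ring of Krull dimension one, that is, a discrete valuation ring, and a discrete valuation ring is integrally closed; the prime $(0)$ contributes the localization $\fracfield R$, which is already a field. Thus the substantive point is the identity $R=\bigcap_{\height P=1}R_P$, and this is exactly where $(\mbox{S}_2)$ is needed.

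For that identity the inclusion $R\subseteq\bigcap_{\height P=1}R_P$ holds trivially because $R$ is a domain. For the reverse inclusion I would take $x$ in the intersection, write $x=a/b$ with $a,b\in R$ and $b\neq0$, and try to show $a\in bR$, equivalently that the image $\bar a$ of $a$ in $R/bR$ is zero. The crucial structural fact, supplied by $(\mbox{S}_2)$, is that every associated prime of $R/bR$ has height exactly one. Indeed, if $P\in\mathrm{Ass}(R/bR)$ then $\height P\geq1$ by Krull's principal ideal theorem, because $b$ is a nonzerodivisor; moreover $\mbox{dt}((R/bR)_P)=0$, and since $b$ is a regular element lying in $P$ the depth formula $\mbox{dt}((R/bR)_P)=\mbox{dt}(R_P)-1$ gives $\mbox{dt}(R_P)=1$, which together with the requirement $\mbox{dt}(R_P)\geq\min\{2,\height P\}$ of $(\mbox{S}_2)$ excludes $\height P\geq2$ and forces $\height P=1$. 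Now membership of $x$ in every height-one localization says precisely that $\bar a$ dies in $(R/bR)_P$ for every associated prime $P$ of $R/bR$; since the canonical map from a finitely generated module over a noetherian ring into the product of its localizations at its associated primes is injective, we conclude $\bar a=0$, so $x=a/b\in R$ and the identity is proved.

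The step I expect to be the main obstacle, and the one I would write out most carefully, is the translation of the depth hypothesis $(\mbox{S}_2)$ into the height statement about $\mathrm{Ass}(R/bR)$: it rests on the drop of depth by one when passing modulo a regular element and on the characterization of the associated primes of a module as precisely the primes at which the local depth is zero. The two remaining ingredients, the injectivity of the map $M\to\prod_{P\in\mathrm{Ass}(M)}M_P$ and the normality of an intersection of discrete valuation rings, are standard, and I would cite rather than reprove them so as to keep the argument concentrated on the depth-to-height passage that makes $(\mbox{S}_2)$ do its work.
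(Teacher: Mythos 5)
This theorem is imported verbatim from Matsumura (Theorem~23.8) and the paper supplies no proof of its own, so there is nothing internal to compare against; your argument is the standard proof of the ``if'' direction of Serre's criterion and is correct --- the reduction to $R=\bigcap_{\height P =1}R_P$, the use of $(\mbox{S}_2)$ via the depth-drop formula to force every associated prime of $R/bR$ to have height one, and the injectivity of $M\to\prod_{P\in\mathrm{Ass}(M)}M_P$ all fit together as you describe, and part~(2) does follow immediately from part~(1) as you say. The one cosmetic slip is the attribution of $\height P\geq 1$ for $P\in\mathrm{Ass}(R/bR)$ to Krull's principal ideal theorem: that theorem gives the upper bound on heights of minimal primes over $(b)$, whereas the lower bound here is simply because $P$ contains the nonzero element $b$ of the domain $R$ and so cannot be the unique height-zero prime $(0)$.
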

  
  The following technical result will be useful to verify that certain rings are 
integrally closed. For a ring $R$ and for $s\geq 1$, let $R[\bft_s]$ denote the 
polynomial ring $R[t_1,\ldots,t_s]$.

\begin{proposition}\label{nj}
  Suppose that $A$ is a regular domain, let $I\subseteq A[\bft_s]$ be a principal ideal
  generated by a prime polynomial $f\in A[\bft_s]$ and set 
  \[
    J=\left(f, d(f) \ | \ d\in \der{A[\bft_s]}\right).
   \]
  If $\mbox{ht}(J)\geq 3$, then  the quotient $A[\bft_s]/I$ is an integrally closed 
  domain.
  \end{proposition}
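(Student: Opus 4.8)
The plan is to show that $R := A[\bft_s]/I$ is an integrally closed domain by verifying Serre's conditions $(\mathrm{R}_1)$ and $(\mathrm{S}_2)$ and then appealing to Theorem~\ref{serrecrit}(\ref{Te13}). Throughout write $B = A[\bft_s]$. Since $A$ is regular, the polynomial ring $B$ is regular as well, hence Cohen--Macaulay. Because $f$ is prime, $I = (f)$ is a prime ideal of height one (Krull's Principal Ideal Theorem), so $R$ is a domain; moreover $f$ is a nonzerodivisor in the Cohen--Macaulay ring $B$, and therefore $R = B/(f)$ is Cohen--Macaulay. In particular $R$ satisfies $(\mathrm{S}_k)$ for every $k$, so $(\mathrm{S}_2)$ holds and it only remains to establish $(\mathrm{R}_1)$.

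For this, recall that the primes $P$ of $R$ are exactly the ideals $\mathfrak p/I$ with $\mathfrak p \in \spec{B}$ and $f \in \mathfrak p$. As $B$ is Cohen--Macaulay and $f$ is a nonzerodivisor, the height of such a $P$ in $R$ equals $\height{\mathfrak p} - 1$. Hence the primes of $R$ of height at most one correspond precisely to the primes $\mathfrak p \ni f$ of $B$ with $\height{\mathfrak p} \leq 2$. For any such $\mathfrak p$ the hypothesis $\height J \geq 3$ forces $J \not\subseteq \mathfrak p$, since otherwise $\height{\mathfrak p} \geq \height J \geq 3$; and because $f \in \mathfrak p$, this means $d(f) \notin \mathfrak p$ for some $d \in \der{B}$.

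The heart of the argument is a Jacobian-type criterion showing that this last condition forces $R_P$ to be regular. Localize at $\mathfrak p$ and set $S = B_{\mathfrak p}$, a regular local ring with maximal ideal $\mathfrak m = \mathfrak p S$. Every $d \in \der{B}$ extends, via the quotient rule, to a derivation of $S$, and any derivation sends $\mathfrak m^2$ into $\mathfrak m$, since $d(ab) = d(a)\,b + a\,d(b) \in \mathfrak m$ whenever $a, b \in \mathfrak m$. Consequently, if $f$ were to lie in $\mathfrak m^2$, then $d(f)$ would lie in $\mathfrak m \cap B = \mathfrak p$ for every $d$, contradicting the choice of $d$ above. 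Thus $f \in \mathfrak m \setminus \mathfrak m^2$ with $f \neq 0$; since $S$ is regular, $f$ is then part of a regular system of parameters, and so $R_P = S/(f)$ is a regular local ring.

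Putting these together, every prime $P$ of $R$ of height at most one arises from a prime $\mathfrak p$ with $J \not\subseteq \mathfrak p$, hence $R_P$ is regular and $(\mathrm{R}_1)$ holds; Serre's criterion, Theorem~\ref{serrecrit}(\ref{Te13}), then gives that $R$ is integrally closed. I expect the main obstacle to be the Jacobian-type step of the third paragraph: one must pin down the precise sense in which the ideal $J$ cuts out the non-regular locus, and in particular verify that a single derivation with $d(f) \notin \mathfrak p$ certifies $f \notin \mathfrak m^2$. The height bookkeeping between $B$ and $R$, which relies on the Cohen--Macaulay property so that the dimension drops by exactly one, is the other point requiring care.
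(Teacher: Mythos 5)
Your proof is correct and follows essentially the same route as the paper: establish that $A[\bft_s]/I$ is Cohen--Macaulay (giving $(\mathrm{S}_2)$), bound the height of the relevant primes upstairs by $2$, use $\height J\geq 3$ to extract a derivation with $d(f)\notin\mathfrak p$, and conclude $(\mathrm{R}_1)$ via Serre's criterion. The only difference is that where the paper cites the Jacobian Criterion for Regularity, you prove the needed special case directly by observing that derivations carry $\mathfrak m^2$ into $\mathfrak m$, so $d(f)\notin\mathfrak p$ forces $f\notin\mathfrak m^2$ and hence $B_{\mathfrak p}/(f)$ regular --- a correct and slightly more self-contained substitute.
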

  \begin{proof}
    Set $R=A[\bft_s]/I$. 
  The polynomial ring $A[\bft_s]$ is regular by~\cite[Theorem~19.5]{matsumura86} 
  and so it is also Cohen-Macaulay by~\cite[Theorem 17.8]{matsumura86}, which, 
  in turn implies that the quotient $R$ 
  is Cohen-Macaulay (see~\cite[Exercise~17.4]{matsumura86}). 
  Thus, by Theorem~\ref{serrecrit}, it suffices to show that $R$ satisfies
  Serre's condition $(\mbox{R}_1)$. 
  Let $\bar{P}\in \spec R$ with $\height{\bar{P}}\leq1$. We are required to show that 
  $(A[\bft_s]/I)_{\bar{P}}$ is regular.
  Note that $\bar{P}=P/I$ for some $P\in \spec{A[\bft_s]}$ with $I\subseteq P$. 
  Moreover, since the Cohen-Macaulay ring $A[\bft_s]$ is a catenary (by~\cite[Theorem~17.9]{matsumura86}) and $0\subset I\subset P$ is a saturated chain of prime ideals,  $\height P\leq2$. If $d(f)\in P$ were true for for all $d\in \der{A[\bft_s]}$, then 
  $P\supseteq J$ would be true, and so $\height P$ would be at least $3$, which is not the case. Therefore, there must exist $d\in \der{A[\bft_s]}$ such that $d(f)\not\in P$. 
  Now the Jacobian Criterion for Regularity (\cite[Theorem~30.4]{matsumura86})
  implies that $A[\bft_s]_{P}/(IA[\bft_s]_{P})\cong (A[\bft_s]/I)_{\bar{P}}$ is regular, 
  and hence $R$ satisfies Serre's condition $(\mbox{R}_{1})$. Now Theorem~\ref{serrecrit} 
  implies that $R$ is integrally closed.
  \end{proof}

\section{The structure of the universal enveloping algebra: the center and the 
$p$-center}\label{sec:center}

Let us introduce the following notation for a Lie algebra $\g$:
\begin{description}
  \item[$C(\g)$] the center of $\g$;
  \item[$U(\g)$] the universal enveloping algebra of $\g$;
  \item[$Z(\g)$] the center of  $U(\g)$;
  \item[$D(\g)$] the division algebra of fractions of $U(\g)$;
  \item[$K(\g)$] the field of fractions of $Z(\g)$;
  \item[$Z_p(\g)$] the $p$-center of  $U(\g)$;
  \item[$K_p(\g)$] fraction field of the $p$-center $Z_p(\g)$.
\end{description} 
%
%


The {\em $p$-center} $Z_p(\g)$ of $U(\g)$ for a Lie algebra $\g$ over a field of characteristic $p$ 
is defined, for instance, in~\cite[Section~5.1]{strade98} where it is denoted $O(\g)$, 
or in~\cite[\S 2]{zassenhaus53}, where it is denoted $\mathfrak o$.  Here we only define it for Lie algebras over a  field $\F$ of characteristic $p$
that are nilpotent of class less than or equal to $p$ (that is, $\g^{p+1}=0$). 
Such a nilpotent Lie algebra $\g$ is restrictable by the trivial $p$-map $x^{[p]}=0$ 
for all $x\in \g$, and hence by~\cite[Theorem~1.3]{strade98}, the $p$-center $Z_p(\g)$ of 
such a Lie algebra can easily be described as follows. Suppose that 
$\{x_1,\ldots,x_k\}$ is a basis for $\g$ such that $\{x_{d+1},\ldots,x_k\}$ is a 
basis for $C(\g)$; then 
\begin{equation}\label{pcenter}
  Z_p(\g)=\F[x_1^p,\ldots,x_d^p,x_{d+1},\ldots,x_k]
\end{equation}
(considered as a subalgebra in $U(\g)$).
Since $\ad\,{x_i^p}=(\ad\,{x_i})^p=0$ for all $i\in\{1,\ldots,d\}$, we obtain that 
$Z_p(\g)\subseteq Z(\g)$.  Furthermore, $U(\g)$ is a free $Z_p(\g)$-module 
of rank $p^d$. 

The division algebra $D(\g)$ is constructed following Ore's construction; 
see~\cite[3.6.13]{dixmier96}. As $Z(\g)$ and $Z_p(\g)$ are  integral domains, their 
fields of fractions $K(\g)$ and $K_p(\g)$ can be embedded into $D(\g)$.  
In characteristic $p$, 
Zassenhaus~\cite[\S 2]{zassenhaus53} observed that the division algebra 
$D(\g)$ can be obtained by adjoining only quotients with denominators in 
$Z_p(\g)$, and this implies that $\dim_{K_p(\g)}D(\g)=p^d$ where, as above,
 $d=\dim g-\dim C(\g)$. 
 
  \begin{theorem}[Corollary~4.7.2~\cite{dixmier96}]\label{lk}
  If $\g$ is a nilpotent Lie algebra, 
  then $K(\g)$ coincides with the center of $D(\g)$.
  \end{theorem}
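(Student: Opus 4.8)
The plan is to set $Z=Z(D(\g))$, note that $Z$ is a field (the center of a division ring), and prove the two inclusions $K(\g)\subseteq Z$ and $Z\subseteq K(\g)$ separately. The first is routine: every element of $Z(\g)$ commutes with all of $U(\g)$, hence with every element of the Ore localization $D(\g)$, so $Z(\g)\subseteq Z$; since $Z$ is a field containing the domain $Z(\g)$, it contains $K(\g)=\fracfield{Z(\g)}$ as well. The same observation shows that an element of $U(\g)$ is central in $U(\g)$ precisely when it is central in $D(\g)$, so that $Z(\g)=U(\g)\cap Z$. At this point I would record the feature peculiar to characteristic $p$: as $D(\g)$ is obtained by inverting only the nonzero elements of $Z_p(\g)$, it has dimension $p^d$ over $K_p(\g)$, and because $K_p(\g)\subseteq K(\g)\subseteq Z$ the division algebra $D(\g)$ is finite dimensional over its center $Z$, while $Z$ itself is a finite extension of $K_p(\g)$.

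For the inclusion $Z\subseteq K(\g)$ the plan is to prove the sharper statement that $Z(\g)$ is exactly the integral closure of $Z_p(\g)$ in the field $Z$. This suffices: since $Z$ is algebraic over $K_p(\g)=\fracfield{Z_p(\g)}$, every element of $Z$ becomes integral over $Z_p(\g)$ after multiplication by a suitable nonzero element of $Z_p(\g)$, so $Z$ is the field of fractions of that integral closure; granting the identification, this field of fractions is $\fracfield{Z(\g)}=K(\g)$, whence $Z=K(\g)$. One half of the identification is immediate from the structure recalled above: $U(\g)$ is a free $Z_p(\g)$-module of rank $p^d$, so every element of $U(\g)$ satisfies its characteristic polynomial over $Z_p(\g)$ and is therefore integral over $Z_p(\g)$; in particular every element of $Z(\g)=U(\g)\cap Z$ is integral over $Z_p(\g)$, giving the inclusion of $Z(\g)$ into the integral closure.

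The reverse half is the heart of the matter and is the step I expect to be the main obstacle: one must show that a central element $r\in Z$ that is integral over $Z_p(\g)$ already lies in $U(\g)$, for then $r\in U(\g)\cap Z=Z(\g)$. I would first reduce the shape of the integral dependence. Each basis element $x_i$ satisfies $x_i^p\in Z_p(\g)\subseteq K_p(\g)$, so $x_i$ is a root of $t^p-x_i^p=(t-x_i)^p\in K_p(\g)[t]$ and is thus purely inseparable over $K_p(\g)$; since the $x_i$ generate $D(\g)$ over $K_p(\g)$, one deduces that the subfield $Z$ is purely inseparable over $K_p(\g)$, and by Theorem~\ref{Isaa1} there is an $m$ with $r^{p^m}\in K_p(\g)$, which after clearing denominators may be taken to lie in $Z_p(\g)$. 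It remains to descend this single relation $r^{p^m}\in Z_p(\g)$ to membership $r\in U(\g)$; I would treat $U(\g)$ as a $Z_p(\g)$-order in $D(\g)$ and verify the claim locally at the height-one primes of the regular ring $Z_p(\g)$, where the localizations are discrete valuation rings and the integral closure of $U(\g)$ can be controlled directly. This normality of $U(\g)$ over its $p$-center is precisely the input furnished by Zassenhaus' analysis of enveloping algebras in prime characteristic~\cite{zassenhaus53}, on which the cited corollary rests. I would also note that in characteristic zero the equality is reached by an entirely different route, namely Dixmier's realization of $D(\g)$, for nilpotent $\g$, as a Weyl skew field over $K(\g)$ whose center is visibly $K(\g)$, which is the argument behind Corollary~4.7.2 of~\cite{dixmier96}.
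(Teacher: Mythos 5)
The paper does not actually prove this statement: it is imported wholesale as Corollary~4.7.2 of Dixmier's book and used as a black box in Section~\ref{sec:center}. So there is no internal argument to compare yours against; the question is whether your sketch would close the gap on its own, and it would not. The easy half is fine: $Z(\g)$ commutes with the Ore localization $D(\g)$, so $K(\g)\subseteq Z:=Z(D(\g))$ and $Z(\g)=U(\g)\cap Z$. The problems are in the converse inclusion. First, your claim that $Z$ is purely inseparable over $K_p(\g)$ does not follow from the fact that each generator $x_i$ is purely inseparable over $K_p(\g)$: pure inseparability of elements is not inherited by noncommutative products (there is no Jacobson-type formula for $(x_ix_j)^p$), and a general element of $Z$ is a $K_p(\g)$-combination of PBW monomials divided by an element of $Z_p(\g)$. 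The assertion that the relevant field extension is purely inseparable is precisely Theorem~\ref{lm}(5), which the paper cites from Strade--Farnsteiner for $K(\g)$; you cannot invoke it for $Z$ without circularity, since $Z=K(\g)$ is what you are trying to prove.

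Second, and more seriously, the step you yourself identify as the heart of the matter --- that a central $r\in Z$ which is integral over $Z_p(\g)$ already lies in $U(\g)$ --- is essentially equivalent to the theorem and is left unproved. Your localization plan reduces, after writing $r=a\pi^{-k}$ at a height-one prime with $a\in U(\g)_{\mathfrak p}\setminus \pi U(\g)_{\mathfrak p}$ and $k\geq 1$, to showing that the fibre $U(\g)_{\mathfrak p}/\pi U(\g)_{\mathfrak p}$ has no nonzero nilpotent central elements (equivalently, that $U(\g)$ is a maximal order over $Z_p(\g)$ in the relevant sense); nothing in your sketch supplies this, and appealing to ``Zassenhaus' analysis'' and ``normality of $U(\g)$ over its $p$-center'' names the missing theorem rather than proving it. Finally, the statement as used in the paper carries no hypothesis relating the characteristic to the nilpotency class (and must also cover characteristic zero, where the cited source actually lives), whereas your argument assumes $x_i^p\in Z_p(\g)$, i.e.\ class at most $p$, and defers characteristic zero entirely back to Dixmier. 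As written, the proposal is an outline of a plausible route, not a proof.
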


  The following result describes the relationship between  
the various rings and fields introduced so far in this section.

  \begin{theorem}[Theorem~1.3, page~203, \cite{strade98}]\label{lm}
    Let $\g$ be a finite-dimensional nilpotent Lie algebra 
    over a field $\F$ of 
    prime characteristic $p$ such that the nilpotency class of $\g$ is less than 
    or equal to $p$. Then the following hold.
    \begin{enumerate}
      \item $Z_p(\g)$ is isomorphic to a polynomial algebra over $\F$ in $\dim \g$ variables. In particular, $Z_p(\g)$ is integrally closed. 
     \item  $U(\g)$ is a free $Z_p(\g)$-module of rank 
      $p^{\dim \g-\dim C(\g)}$.  
      \item $\dim_{K_p(\g)}D(\g)=p^{\dim \g-\dim C(\g)}$.
      \item $\dim_{K(\g)}D(\g)=p^{2m}$ for some $m\geq 1$. 
        \item $Z(\g)$ is an integral ring extension of $Z_p(\g)$  
        while $K(\g)$ is a finite purely inseparable field extension of $K_p(\g)$.  
      \item If $K(\g)=K_p(\g)$, then $Z(\g)=Z_p(\g)$. 
    \end{enumerate}
  \end{theorem}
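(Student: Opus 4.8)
The plan is to build everything on the Poincar\'e--Birkhoff--Witt (PBW) theorem together with the explicit basis $\{x_1,\dots,x_k\}$ fixed before the statement, and then to push the resulting module structure through Ore localization and the dimension theory of central division algebras. For part~(1), I would first record that the elements $x_1^p,\dots,x_d^p,x_{d+1},\dots,x_k$ all lie in $Z_p(\g)$ and are central in $U(\g)$; writing each PBW monomial $x_1^{a_1}\cdots x_k^{a_k}$ as $(x_1^p)^{q_1}\cdots(x_d^p)^{q_d}x_{d+1}^{a_{d+1}}\cdots x_k^{a_k}\cdot x_1^{r_1}\cdots x_d^{r_d}$ with $a_i=pq_i+r_i$ and $0\leq r_i<p$ (legitimate because the factored-out elements are central) exhibits these generators as yielding pairwise distinct PBW monomials. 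Hence any algebraic relation among them would be a nontrivial linear relation among PBW monomials, which is impossible; this shows $Z_p(\g)$ is a polynomial algebra in $\dim\g=k$ variables, and a polynomial algebra over a field is factorial and therefore integrally closed. The same computation proves part~(2): the $p^d$ reduced monomials $x_1^{r_1}\cdots x_d^{r_d}$ with $0\leq r_i<p$ span $U(\g)$ over $Z_p(\g)$, and PBW independence makes them a free basis, so $U(\g)$ is $Z_p(\g)$-free of rank $p^{d}$ with $d=\dim\g-\dim C(\g)$.

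For part~(3), I would invoke Zassenhaus's observation that $D(\g)$ is obtained from $U(\g)$ by adjoining inverses of the nonzero elements of $Z_p(\g)$; that is, $D(\g)=S^{-1}U(\g)$ for the central multiplicative set $S=Z_p(\g)\setminus\{0\}$. Localizing the free module of part~(2) gives a $K_p(\g)$-algebra that is free of rank $p^{d}$ and is a domain, and a finite-dimensional domain over a field is a division ring; this both re-identifies $S^{-1}U(\g)$ with $D(\g)$ and yields $\dim_{K_p(\g)}D(\g)=p^{d}$. Part~(4) then follows from the tower $K_p(\g)\subseteq K(\g)\subseteq D(\g)$: by Theorem~\ref{lk}, $K(\g)$ is the center of $D(\g)$, so $D(\g)$ is a central division algebra over $K(\g)$ and $\dim_{K(\g)}D(\g)$ is a perfect square; multiplicativity of vector-space dimension in the tower gives $p^{d}=[K(\g):K_p(\g)]\cdot\dim_{K(\g)}D(\g)$, and since the left-hand side is a power of $p$ both factors are powers of $p$. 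A perfect square that is a power of $p$ has the form $p^{2m}$, with $m\geq 1$ precisely when $D(\g)$ is noncommutative, i.e.\ when $\g$ is non-abelian.

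Part~(5) splits into an integrality statement and an inseparability statement. Integrality and finiteness are immediate from part~(2): $U(\g)$ is a finite, hence integral, module over the noetherian ring $Z_p(\g)$, so the submodule $Z(\g)\supseteq Z_p(\g)$ is itself a finite $Z_p(\g)$-module and thus an integral extension; passing to fraction fields makes $K(\g)$ finite over $K_p(\g)$. For pure inseparability I would reduce, via Theorem~\ref{Isaa1} and Corollary~\ref{cor:insep}, to showing that each $z\in Z(\g)$ satisfies $z^{p^{N}}\in Z_p(\g)$ for some $N$ (this suffices for $K(\g)$ because every element of $K(\g)$ is a quotient of two such $z$). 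Writing $z$ in the $Z_p(\g)$-basis of part~(2) and iterating the $p$-power map, Jacobson's formula $(\sum a_i)^p=\sum a_i^p+\Lambda$ shows that $z^{p}$ equals an element of $Z_p(\g)$ (the leading $p$-th powers of the reduced monomials, which lie in $Z_p(\g)$) plus correction terms built from iterated commutators; the hypothesis that the nilpotency class is at most $p$ forces these corrections to lie deeper in the lower central series filtration, so that a bounded number of further $p$-th powers annihilates them. I expect this last point---controlling the commutator corrections so that $z^{p^{N}}$ genuinely lands in $Z_p(\g)$---to be the main obstacle, and it is exactly where the class $\leq p$ and trivial $p$-map hypotheses are used.

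Finally, part~(6) is a formal consequence of the pieces already in place: if $K(\g)=K_p(\g)$, then every element of $Z(\g)$ lies in $\fracfield{Z_p(\g)}$ and is integral over $Z_p(\g)$ by part~(5), so integral closedness of $Z_p(\g)$ from part~(1) forces $Z(\g)\subseteq Z_p(\g)$; together with the inclusion $Z_p(\g)\subseteq Z(\g)$ noted before the statement this gives $Z(\g)=Z_p(\g)$.
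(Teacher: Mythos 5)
Most of what you write is correct and is essentially the standard material that the paper itself does not reprove but cites from Strade--Farnsteiner and Zassenhaus: your PBW arguments for (1) and (2), the Ore-localization argument for (3), the central-simple-algebra square argument for (4), the finiteness-hence-integrality half of (5), and the formal deduction of (6) all work as described. (Your observation that $m\geq 1$ in item (4) really requires $\g$ to be non-abelian is a fair caveat about the statement rather than a flaw in your proof.)

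The genuine gap is the pure inseparability claim in part (5), and you correctly sense that this is where the difficulty lies -- but the mechanism you propose does not work. You write $z=\sum_r c_rx^r$ in the reduced-monomial basis and claim that Jacobson's formula gives $z^p=(\text{element of }Z_p(\g))+(\text{commutator corrections that die after finitely many further }p\text{-th powers})$. Both halves fail. First, $(c_rx^r)^p=c_r^p(x^r)^p$ is \emph{not} in $Z_p(\g)$ for a general reduced monomial: in the Heisenberg algebra $\g_3$ with $p=2$ (class $2\leq p$, so the hypotheses hold) one computes $(x_1x_2)^2=x_1^2x_2^2-x_1x_2x_3$, whose reduced component $x_1x_2x_3$ is nonzero. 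Second, the correction terms $\Lambda$ are iterated commutators of elements of $U(\g)$, not of $\g$, so the lower central series of $\g$ does not bound them; and since $U(\g)$ is a domain, nonzero corrections are never ``annihilated'' by taking further $p$-th powers. The deeper problem is that your argument nowhere uses that $z$ is \emph{central}, and the conclusion is false without centrality: the same element $u=x_1x_2\in U(\g_3)$, $p=2$, satisfies $u^2+x_3u+x_1^2x_2^2=0$, a quadratic over $K_p(\g_3)$ with nonzero derivative $x_3$, hence separable; so $u\notin K_p(\g_3)$ generates a nontrivial \emph{separable} extension and no power $u^{2^N}$ ever lies in $Z_p(\g_3)$. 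Any correct proof must therefore exploit $z\in Z(\g)$ in an essential way -- for instance Zassenhaus's original argument, or the observation that $D(\g)\otimes_{K_p(\g)}\overline{K_p(\g)}$ is generated by the nilpotent elements $x_i\otimes 1-1\otimes\alpha_i^{1/p}$ and admits a unique simple module, which forces the separable part of $K(\g)/K_p(\g)$ to be trivial. The paper sidesteps this entirely by attributing item (5) to \cite{strade98}; you should either do the same or replace your sketch with one of these genuine arguments.
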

\begin{proof}
  Items~(1) and (2) follow from~\cite[Theorem~1.3, Section~5.1]{strade98}.
  Item~(3) follows from the fact that $D(\g)$ can be obtained by adjoining quotients
  with denominators in $Z_p(\g)$ only (see~\cite[\S 2]{zassenhaus53}). 
  Statement~(4) follows from 
  Theorem~\ref{lk} and from the fact that the dimension of a division algebra over its 
  center is a square number.  Statement~(5) follows from the fact that 
  $Z(\g)$ is a $Z_p(\g)$-submodule in the finitely generated $Z_p(\g)$-module $U(\g)$, 
  and hence it is finitely generated.  Finally, item (6) follows from that fact that $Z_p(\g)$ is integrally closed.
\end{proof}

  \begin{proposition}\label{yhb}
  Let $\g$ be a finite-dimensional nilpotent Lie algebra over a field $\mathbb{F}$ 
  of prime characteristic $p$ such that the  nilpotency class of $\g$ is less than 
  or equal to  $p$. Assume that there exist $z_{1},\ldots, z_{s}\in Z(\g)$  such that 
  $z_1^p,\ldots,z_s^p\in Z_p(\g)$ and  
  \begin{equation}\label{tower1}
  K_{p}(\g)\subset K_{p}(\g)(z_{1})\subset K_{p}(\g)(z_{1}, z_{2})\subset\cdots\subset K_{p}(\g)(z_{1},\ldots, z_{s}).
  \end{equation}
Then  $\dim_{K_{p}(\g)} K_{p}(\g)(z_{1}, \ldots, z_{s})=p^{s}$. 
Furthermore, if $s=\dim \g-\dim C(\g)-2$, then $K_{p}(\g)(z_{1}, \ldots, z_{s})=K(\g)$ and 
$\dim_{K(\g)}D(\g)=p^2$. 
  \end{proposition}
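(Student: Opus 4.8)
The plan is to prove the two assertions in turn, first extracting the degree of each step of the tower from pure inseparability, and then combining the resulting count with the dimension data for $D(\g)$ recorded in Theorem~\ref{lm}. Write $L_0 = K_p(\g)$ and $L_j = K_p(\g)(z_1,\ldots,z_j)$ for $1 \leq j \leq s$, so that~\eqref{tower1} is the chain $L_0 \subset L_1 \subset \cdots \subset L_s$ of proper inclusions.

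For the first assertion I would compute $[L_j : L_{j-1}]$ for each $j$. Since $z_j^p \in Z_p(\g) \subseteq K_p(\g) \subseteq L_{j-1}$, the element $z_j$ is a root of $x^p - z_j^p \in L_{j-1}[x]$, and in characteristic $p$ this polynomial equals $(x - z_j)^p$. Hence the minimal polynomial of $z_j$ over $L_{j-1}$ is $(x - z_j)^{d_j}$ with $d_j = [L_j : L_{j-1}]$. If one had $1 < d_j < p$, then the coefficient of $x^{d_j - 1}$, namely $-d_j z_j$, would lie in $L_{j-1}$; as $d_j$ is invertible modulo $p$ this forces $z_j \in L_{j-1}$, contradicting the strictness of the inclusion. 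Thus $d_j \in \{1, p\}$, and strictness excludes $d_j = 1$, leaving $d_j = p$ for every $j$. Multiplicativity of degrees in a tower then yields $\dim_{K_p(\g)} L_s = p^s$.

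For the second assertion set $d = \dim \g - \dim C(\g)$, so $s = d - 2$. Because each $z_i \in Z(\g)$, we have $L_s \subseteq \fracfield{Z(\g)} = K(\g)$, giving the chain $K_p(\g) \subseteq L_s \subseteq K(\g)$. By Theorem~\ref{lk} the field $K(\g)$ is the center of $D(\g)$, and $K_p(\g) \subseteq K(\g)$ since $Z_p(\g) \subseteq Z(\g)$; moreover $K(\g)$ is a finite extension of $K_p(\g)$ by Theorem~\ref{lm}(5). I would then apply the tower law for $D(\g)$ regarded as a vector space over the central subfields $K_p(\g) \subseteq K(\g)$:
\[
  \dim_{K_p(\g)} D(\g) = \dim_{K_p(\g)} K(\g) \cdot \dim_{K(\g)} D(\g).
\]
Substituting $\dim_{K_p(\g)} D(\g) = p^d$ and $\dim_{K(\g)} D(\g) = p^{2m}$ (Theorem~\ref{lm}(3),(4)) gives $\dim_{K_p(\g)} K(\g) = p^{d - 2m}$ with $m \geq 1$.

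Finally I would compare $L_s$ with $K(\g)$. Since $L_s \subseteq K(\g)$ and $\dim_{K_p(\g)} L_s = p^{d-2}$ from the first part, we get $p^{d-2} \leq p^{d-2m}$, hence $m \leq 1$ and so $m = 1$. This immediately gives $\dim_{K(\g)} D(\g) = p^2$, and also $\dim_{K_p(\g)} K(\g) = p^{d-2} = \dim_{K_p(\g)} L_s$; as $L_s \subseteq K(\g)$ share the same finite dimension over $K_p(\g)$, they coincide. I expect the main obstacle to be not any individual computation but ensuring the multiplicativity identity is legitimate: it rests precisely on $K(\g)$ being the center of $D(\g)$ (Theorem~\ref{lk}) and on $K_p(\g)$ lying inside it, which is what lets the two degrees be read off from the single quantity $\dim_{K_p(\g)} D(\g) = p^d$.
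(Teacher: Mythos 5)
Your proof is correct and follows essentially the same route as the paper's: each step of the tower~\eqref{tower1} has degree exactly $p$, and the count $\dim_{K_p(\g)}D(\g)=p^{\dim\g-\dim C(\g)}$ combined with $\dim_{K(\g)}D(\g)=p^{2m}$, $m\geq1$, then forces $K_p(\g)(z_1,\ldots,z_s)=K(\g)$ and $m=1$. The only local difference is the lower bound on each step degree: the paper invokes pure inseparability (so that the degree is a power of $p$, via Corollary~\ref{cor:insep}), while you argue directly that the minimal polynomial of $z_j$ is $(x-z_j)^{d_j}$ and that $1<d_j<p$ would exhibit $z_j$ through the coefficient $-d_jz_j$ of $x^{d_j-1}$ --- a self-contained and equally valid variant.
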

  \begin{proof}
  It suffices to show that $d_i=\dim_{K_{p}(\g)(z_{1}, \ldots, z_{i-1})}K_{p}(\g)(z_{1}, \ldots, z_{i-1},z_i)=p$ for all $i$. 
  Since the inclusions in~\eqref{tower1} are proper, 
  $z_{i}\not\in K_{p}(z_{1},\ldots,z_{i-1})$, but $z_{i}^{p}\in K_{p}(\g)$, 
  and hence
  $d_i\leq p$. On the other hand $K_{p}(\g)(z_{1}, \ldots, z_{i-1},z_i)$ is purely inseparable over $K_{p}(\g)(z_{1}, \ldots, z_{i-1})$ and so $d_i\geq p$. 
  Thus $d_i=p$ as is required.

Let us show the second statement. By Theorem~\ref{lm},  
\[
\dim_{K_p(\g)} D(\g) = p^{\dim\g-\dim C(\g)},
\] 
while Theorem~\ref{lk} implies that 
$\dim_{K(\g)}D(\g)\geq p^2$. Thus if a chain as in~\eqref{tower1} exists
with $s=\dim\g-\dim C(\g)-2$, then it must follow that $K_p(\g)(z_1,\ldots,z_s)=K(\g)$ and 
$\dim_{K(\g)}D(\g)=p^2$. 
\end{proof}

\begin{lemma}\label{dimensao}
  Let $\mathfrak{g}$ be a finite-dimensional nilpotent Lie algebra 
  over a field $\mathbb{F}$ of prime characteristic $p$ such that 
  the nilpotency class of $\g$ is less than or equal to $p$.  Suppose that there exist $x, y, z\in \mathfrak{g}$ 
  such that $x\not\in C(\mathfrak{g})$,  
  $[x, y]=0$, $[x,z]=0$, but $[y,z]\neq 0$.  
  Then $\dim_{K(\mathfrak{g})}D(\mathfrak{g})\geq p^{4}$. 
  \end{lemma}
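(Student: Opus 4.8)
The plan is to work inside the division algebra $D(\g)$, which by Theorem~\ref{lk} and Theorem~\ref{lm}(4) is a finite-dimensional central division algebra over its center $K(\g)$ with $\dim_{K(\g)}D(\g)=p^{2m}$ for some $m\geq 1$; the whole point is to upgrade this to $m\geq 2$. Since $x\notin C(\g)$, the element $x$ fails to commute with some element of $\g$, hence it is not central in $U(\g)$ and therefore not central in $D(\g)$. As $K(\g)$ is exactly the center of $D(\g)$, this gives $x\notin K(\g)$. On the other hand, since $\g$ has class at most $p$ and carries the trivial $p$-map, the $p$-semilinear map $x\mapsto x^p$ sends $\g$ into $Z_p(\g)$, so $x^p\in Z_p(\g)\subseteq Z(\g)\subseteq K(\g)$. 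Thus $L:=K(\g)(x)$ is a purely inseparable extension: $t^p-x^p$ is the minimal polynomial of $x$ over $K(\g)$ (it is irreducible because $x^p\notin K(\g)^p$, as otherwise $x^p=c^p$ would give $(x-c)^p=0$ in the division ring $D(\g)$, forcing $x=c\in K(\g)$), and hence $L$ is a genuine subfield with $[L:K(\g)]=p$.

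Next I would apply the Double Centralizer Theorem to the central simple $K(\g)$-algebra $D(\g)$ and its simple commutative subalgebra $L$. This yields that $C:=\cent{D(\g)}{x}=\cent{D(\g)}{L}$ is a simple algebra whose center equals $Z(L)=L$ and which satisfies $[L:K(\g)]\cdot\dim_{K(\g)}C=\dim_{K(\g)}D(\g)$. Being a centralizer inside a division ring, $C$ is itself a division algebra. The dimension bookkeeping then gives $\dim_{K(\g)}C=p^{2m-1}$ and hence $\dim_{L}C=p^{2m-2}$, so $C$ is a central division algebra over $L$ of degree $p^{m-1}$.

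Finally I would bring in the three elements. Because $[x,y]=0$ and $[x,z]=0$, both $y$ and $z$ commute with $x$ and so lie in $C=\cent{D(\g)}{x}$; because $[y,z]\neq 0$, they do not commute, so $C$ is noncommutative. A central division algebra is commutative precisely when its dimension over its center is $1$, so $p^{2m-2}=\dim_L C>1$, which forces $m\geq 2$ and therefore $\dim_{K(\g)}D(\g)=p^{2m}\geq p^4$, as claimed.

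The dimension counting and the identification $x^p\in K(\g)$ are routine; the step I expect to require the most care is the invocation of the Double Centralizer Theorem. Specifically, one must confirm that $D(\g)$ really is a \emph{finite-dimensional central simple} $K(\g)$-algebra (this is precisely what Theorems~\ref{lk} and~\ref{lm} provide), that $L$ is an honest subfield of degree exactly $p$ rather than a truncated algebra with nilpotents, and that the center of the centralizer $C$ is indeed $L$, so that the equality $\dim_L C=p^{2m-2}$ is legitimate. Once these structural facts are secured, the noncommutativity of $\{y,z\}$ inside $C$ closes the argument at once.
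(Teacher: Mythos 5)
Your argument is correct, but it takes a genuinely different route from the paper's. The paper never invokes the Double Centralizer Theorem: it builds the tower of \emph{commutative} subalgebras $K(\g)\subseteq D_1=K(\g)[x]\subseteq D_2=K(\g)[x,y]$ inside $D(\g)$, observes that $D_1$ and $D_2$ are fields (being finite-dimensional commutative domains over $K(\g)$), uses $z$ exactly once --- since $D_1\subseteq \cent{D(\g)}{z}$ while $y\notin\cent{D(\g)}{z}$, the inclusion $D_1\subset D_2$ is proper --- and then pins down $\dim_{K(\g)}D_1=\dim_{D_1}D_2=p$ via the pure-inseparability statements of Corollary~\ref{cor:insep}. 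The conclusion follows because $D_2\neq D(\g)$ (as $D(\g)$ is noncommutative), so $\dim_{K(\g)}D(\g)$ is a power $p^{2m}$ strictly larger than $\dim_{K(\g)}D_2=p^2$, forcing $m\geq 2$. Your proof instead passes to the centralizer $C=\cent{D(\g)}{x}$, identifies it via the Double Centralizer Theorem as a central division algebra of dimension $p^{2m-2}$ over $L=K(\g)(x)$, and detects $m\geq 2$ from the noncommutativity of $C$, which contains the noncommuting pair $y,z$. Your route is conceptually clean and treats $y$ and $z$ symmetrically, at the price of importing the Double Centralizer Theorem for finite-dimensional central simple algebras; the paper's route is more elementary, needing only the square-dimension fact from Theorem~\ref{lm}(4) and basic field theory. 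The structural prerequisites you flag as delicate (that $D(\g)$ is finite-dimensional and central over $K(\g)$, that $L$ is an honest subfield of degree exactly $p$, and that $Z(C)=L$) are indeed supplied by Theorems~\ref{lk} and~\ref{lm} and by the irreducibility of $t^p-x^p$, exactly as you indicate, so there is no gap.
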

  \begin{proof}
  Suppose that $x, y\in \mathfrak{g}$ are as in the lemma. 
  Set $D_{1}=K(\mathfrak{g})[x]$ and $D_{2}=K(\mathfrak{g})[x,y]$.
  Since $[x, y]=0$, $D_{1}$ and $D_{2}$ are commutative subalgebras 
  of $D(\g)$. By Theorem~\ref{lm}, $D_{1}$ and $D_{2}$ are finite-dimensional 
  integral domains over the field $K(\mathfrak{g})$ and so they both are fields
  (see \cite[Corollary~1.2.3]{finitealgebra}). Moreover, $D_2\neq D(\g)$ since 
  $D(\g)$ is not commutative. Since $[x,z]=0$, $D_1\leq C_{D(\g)}(z)$, 
  and, as $[y,z]\neq 0$, it follows that $y\not\in C_{D(\g)}(z)$ and hence 
  $y\not\in K(\g)[x]$, and, in particular,
  $D_{1}\neq D_{2}$. The condition 
  $\mbox{cl}(\mathfrak{g})\leq p$ implies that 
  $x^{p}, y^{p}\in K(\mathfrak{g})$ which gives that 
  $\dim_{K(\mathfrak{g})}D_{1}\leq p$ and $\dim_{D_{1}}D_{2}\leq p$. 
  By Corollary~\ref{cor:insep}, $K(\mathfrak{g})\subseteq D_{1}$ and $D_{1}\subseteq D_{2}$ are purely inseparable extensions, and so $\dim_{K(\mathfrak{g})}D_{1}=\dim_{D_{1}}D_{2}=p$. Hence, $\dim_{K(\mathfrak{g})}D_{2}=p^{2}$ and  $\dim_{K(\mathfrak{g})}D(\mathfrak{g})\geq p^{4}$ (considering that $\dim_{K(\g)}D(g)$ is a square number).
  \end{proof}

\section{The generators of $Z(\g)$}\label{sec:th1}

In this section we prove the following theorem.

\begin{theorem}\label{th1}
  Let $\g$ be a nilpotent Lie algebra of dimension at most~$6$ over a field 
  $\F$ of prime characteristic $p$ and suppose that $p$ is not smaller than the 
  nilpotency class of~$\g$. 
  \begin{enumerate}
    \item If $\g$ does not appear in Table~\ref{table:Zgpgen}, then $Z(\g)=Z_p(\g)$.
    \item If $\g$ appears in Table~\ref{table:Zgpgen}, then 
     $Z(\g)$ is equal to the integral closure of $Z_p(\g)[z_1,\ldots,z_k]$ in its field of fractions
    where $z_1,\ldots,z_k$ are the elements 
    in the $z_i$-column of the corresponding row of Table~\ref{table:Zgpgen}.
    \item If $\g$ is as in item~(2) and is not isomorphic to one of the algebras $\g_{5,5}$, $\g_{6,18}$, 
    or $\g_{6,25}$, then 
    \[
      Z(\g)=Z_p(\g)[z_1,\ldots,z_k].
    \]
  \end{enumerate}
\end{theorem}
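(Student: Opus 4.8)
The plan is to argue algebra by algebra through the classification of nilpotent Lie algebras of dimension at most six (as in~\cite{deg,cgs12}), using a uniform strategy with three ingredients: the determination of the field degree $[K(\g):K_p(\g)]$, the reduction of the two ring statements to this field degree, and the integral-closure test of Proposition~\ref{nj}. Set $d=\dim\g-\dim C(\g)$. For each $\g$ I would first exhibit the candidate generators $z_1,\dots,z_k\in Z(\g)$, namely the same elements that generate the center in characteristic zero; I would check by a direct Poincar\'e--Birkhoff--Witt computation that each $z_i$ is central and that $z_i^p\in Z_p(\g)$, so that the chain~\eqref{tower1} is a tower of purely inseparable steps.

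The heart of the matter is to show $[K(\g):K_p(\g)]=p^k$. By Theorem~\ref{lm} we have $\dim_{K_p(\g)}D(\g)=p^d$ and $\dim_{K(\g)}D(\g)=p^{2m}$, and since $K(\g)$ is the center of $D(\g)$ (Theorem~\ref{lk}), the tower law for the chain $K_p(\g)\subseteq K(\g)$ gives $[K(\g):K_p(\g)]=p^{\,d-2m}$. Lemma~\ref{dimensao} supplies the lower bound $\dim_{K(\g)}D(\g)\geq p^4$ (that is, $m\geq 2$) for those algebras admitting elements $x,y,z$ with $[x,y]=[x,z]=0\neq[y,z]$, while checking that the inclusions in~\eqref{tower1} are proper and invoking Proposition~\ref{yhb} yields $[K_p(\g)(z_1,\dots,z_k):K_p(\g)]=p^k$. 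Combined with the inclusion $K_p(\g)(z_1,\dots,z_k)\subseteq K(\g)$, the equality $k=d-2m$ then forces $K_p(\g)(z_1,\dots,z_k)=K(\g)$ and determines $m$. In particular, an algebra lies outside Table~\ref{table:Zgpgen} precisely when $k=0$, i.e.\ $d=2m$; then $K(\g)=K_p(\g)$ and Theorem~\ref{lm}(6) gives $Z(\g)=Z_p(\g)$, which is part~(1).

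For part~(2), once $\fracfield\bigl(Z_p(\g)[z_1,\dots,z_k]\bigr)=K_p(\g)(z_1,\dots,z_k)=K(\g)$, write $B=Z_p(\g)[z_1,\dots,z_k]$ and consider the chain $B\subseteq Z(\g)\subseteq K(\g)$. Since $Z(\g)$ is integral over $Z_p(\g)$ by Theorem~\ref{lm}(5), it is integral over $B$, so $Z(\g)$ is contained in the integral closure $\bar B$ of $B$ in $K(\g)$; conversely $\bar B\subseteq Z(\g)$ because $Z(\g)$ is integrally closed in its fraction field (Zassenhaus' theorem). Hence $Z(\g)=\bar B$, as claimed. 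Part~(3) then reduces to showing that, outside the three exceptional algebras, $B$ is already integrally closed. Using that $Z_p(\g)$ is a polynomial ring over $\F$ (Theorem~\ref{lm}(1)) and that $z_i^p-\alpha_i$ with $\alpha_i\in Z_p(\g)\setminus Z_p(\g)^p$ is prime by Lemma~\ref{corq}, Lemma~\ref{rr} presents $B$ (in the cases where a single generator is adjoined) as a hypersurface quotient $A[\bft_s]/(f)$ of a regular ring, and I would apply Proposition~\ref{nj} after forming the ideal $J=\bigl(f,\ d(f)\mid d\in\der{A[\bft_s]}\bigr)$ and verifying $\height J\geq 3$.

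I expect the genuine obstacle to be this last height computation. Because the adjoined variables contribute vanishing partial derivatives in characteristic~$p$, the ideal $J$ is cut out by $f$ together with the partial derivatives of the $\alpha_i$, so $\height J\geq 3$ is a codimension condition on the singular locus of the defining relations. Verifying it uniformly, and, crucially, seeing that it fails exactly for $\g_{5,5}$, $\g_{6,18}$ and $\g_{6,25}$, where the bound drops to $\height J\leq 2$ and $B$ acquires extra integral elements, is where the case analysis becomes delicate and, for the three exceptional algebras, calls for the Macaulay2 computations mentioned in the introduction.
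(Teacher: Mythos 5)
Your architecture is the same as the paper's: the degree bookkeeping $[K(\g):K_p(\g)]=p^{\,d-2m}$ from Theorems~\ref{lk} and~\ref{lm}, the lower bound $m\geq 2$ from Lemma~\ref{dimensao}, Proposition~\ref{yhb} to identify $K_p(\g)(z_1,\ldots,z_k)$ with $K(\g)$, the sandwich $B\subseteq Z(\g)\subseteq \bar B\subseteq Z(\g)$ (using that $Z(\g)$ is integrally closed) for part~(2), and the presentation $B\cong \F[\bft_s]/(\hat f)$ via Lemmas~\ref{corq} and~\ref{rr} followed by the height test of Proposition~\ref{nj} for part~(3). All of that is how the paper proceeds.

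There is, however, one step that fails as proposed. For the parametric families $\g_{6,22}(\varepsilon)$ and $\g_{6,24}(\varepsilon)$ with $\varepsilon$ a non-square in $\F$ (and for $\g_{6,7}^{(2)}(\varepsilon)$ in characteristic $2$ with $\varepsilon\notin\{x^2+x\mid x\in\F\}$), the hypothesis of Lemma~\ref{dimensao} cannot be met over $\F$: for $\g_{6,22}(\varepsilon)$, say, a short computation shows that $\ad x$ has rank $2$ on $\g/C(\g)$ for \emph{every} non-central $x$ unless $\varepsilon$ is a square, so every centralizer of a non-central element is abelian and no triple $x,y,z$ with $x\notin C(\g)$, $[x,y]=[x,z]=0$, $[y,z]\neq 0$ exists. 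Your derivation of $m\geq 2$, hence of $d=2m$ and $K(\g)=K_p(\g)$, therefore breaks down precisely for these algebras, which are among those claimed in part~(1). The paper supplies the missing idea: pass to the quadratic extension $\E=\F[t]/(t^2-\varepsilon)$ (resp.\ $\F[x]/(x^2+x+\varepsilon)$), where the algebra becomes isomorphic to the parameter-$1$ (resp.\ parameter-$0$) member of the family, conclude $Z(\g\otimes\E)=Z_p(\g\otimes\E)$ there, and descend via $Z(\g)=Z(\g\otimes\E)\cap U(\g)=Z_p(\g\otimes\E)\cap U(\g)=Z_p(\g)$. Some such base-change-and-descent argument is indispensable for the theorem to hold over arbitrary fields. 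A secondary remark: the Macaulay2 computations you invoke at the end are not part of the proof of Theorem~\ref{th1}; part~(2) for $\g_{5,5}$, $\g_{6,18}$, $\g_{6,25}$ already follows from your abstract sandwich argument (the paper additionally proves that the localization $Z_p(\g)_{u}[z_1,\ldots,z_k]$ is integrally closed in order to locate the closure, and uses Macaulay2 only to exhibit explicit generators for small $p$).
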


We split the proof into several subsections grouping together algebras that show
similar behaviour. The arguments in this section are quite computational. We give enough detail for the reader to follow the arguments; more detailed calculations are available in the first author's PhD thesis~\cite{vanderlei}.

\subsection{$\boldsymbol{Z(\g)=Z_p(\g)}$} 
Suppose that $\g$ is one of the algebras 
$\g_3$, $\g_{5,1}$, $\g_{5,3}$, $\g_{5,6}$, $\g_{6,7}^{(2)}(\varepsilon)$ (in characteristic $2$ only),  $\g_{6,22}(\varepsilon)$, $\g_{6,23}$, 
$\g_{6,24}(\varepsilon)$, $\g_{6,27}$, $\g_{6,28}$ with some $\varepsilon\in\F$. 
We claim that $Z(\g)=Z_p(\g)$.

Let us temporary suppose that $\varepsilon\in\{0,1\}$ in case $\g=\g_{6,22}(\varepsilon)$ or $\g=\g_{6,24}(\varepsilon)$ 
and $\varepsilon=0$ when $\g=\g_{6,7}^{(2)}(\varepsilon)$.
First we show that $K(\g)=K_p(\g)$. The 
proof being the same for all these algebras, we present enough details so that 
the reader can reconstruct the argument in each individual case, but we omit some 
calculations. By Theorem~\ref{lm}, $\dim_{K_p(\g)}D(\g)=p^{\dim\g-\dim C(\g)}$, while 
$\dim_{K(\g)}D(\g)=p^{2m}$ where $m$ is a positive integer. Thus 
\begin{align*}
  p^{\dim\g-\dim C(\g)}&=\dim_{K_p(\g)}D(\g)=\dim_{K(\g)}D(\g)\cdot\dim_{K_p(\g)} K(\g)\\&=
  p^{2m}\dim_{K_p(\g)} K(\g), 
\end{align*}
and so 
\[
  \dim_{K_p(\g)} K(\g)=p^{\dim\g-\dim C(\g)-2m}.
\]
We claim that $\dim\g-\dim C(\g)-2m=0$ which implies for these algebras 
that $K_p(\g)=K(\g)$. This is clear if $\g=\g_3$, since $m\geq 1$ and 
$\dim\g_3-\dim C(\g_3)=2$. Assume in the rest of the argument that $\g$ is one of the 
other algebras.
Inspection of Tables~\ref{table:dim5}--\ref{table:dim6} shows in all cases that 
$\dim\g-\dim C(\g)\leq 4$. Hence, we are only required to show that $m\geq 2$, that is,  
$\dim_{K(\g)}D(\g)\geq p^4$. However, in each case, one can choose distinct $x,y,z\in\g$ 
such that $x\not\in C(\g)$, $[x,y]=[x,z]=0$ and
$[y,z]\neq 0$. 
Thus Lemma~\ref{dimensao} implies that $\dim_{K(\g)}D(\g)\geq p^4$ which gives
that $K(\g)=K_p(\g)$, as required. 

Now the facts that $Z_p(\g)$, being isomorphic
to a polynomial algebra, is integrally closed, and $Z(\g)$ is an integral extension of 
$Z_p(\g)$ with the same fraction field imply that $Z(\g)=Z_p(\g)$. 

It remains to consider the case when $\g=\g_{6,22}(\varepsilon)$ or 
$\g=\g_{6,24}(\varepsilon)$ such that $\varepsilon\not\in\F\setminus\{0,1\}$
or $\g=\g_{6,7}^{(2)}(\varepsilon)$ with $\varepsilon\neq 0.$
Let $m\in\{22,24\}$. 
If $\varepsilon$ is a square in $\F$ (that is, $\varepsilon\in\F^2$), then $\g_{6,m}(\varepsilon)\cong 
\g_{6,m}(1)$ by~\cite[Theorem~3.1]{cgs12}. Suppose that $\varepsilon\not\in\F^2$ and  
set $\E=\F[t]/(t^2-\varepsilon)$. Then $\E$ is a quadratic field extension  of $\F$ 
such that $\varepsilon\in\E^2$. 
By~\cite[Theorem~Theorem~3.1]{cgs12}, $\g_{6,m}(\varepsilon)\otimes\E\cong 
\g_{6,m}(1)\otimes\E$, and hence  
\[
  Z(\g_{6,m}(\varepsilon)\otimes \E)=Z_p(\g_{6,m}(\varepsilon)\otimes \E)
\]
and 
\[
  Z(\g_{6,m}(\varepsilon))=Z(\g_{6,m}(\varepsilon)\otimes \E)\cap U(\g)=Z_p(\g_{6,m}(\varepsilon)\otimes\E)\cap U(\g_{6,m}(\varepsilon))=Z_p(\g_{6,m}(\varepsilon)).
\]

The same argument can be used for the algebra $\g_{6,7}^{(2)}(\varepsilon)$ over fields of characteristic~2. If $\varepsilon\in\{x^2+x\mid x\in\F\}$, then 
$\g_{6,7}^{(2)}(\varepsilon)\cong \g_{6,7}^{(2)}(0)$ (see~\cite[Theorem~3.1]{cgs12} and~\cite{cgscorr}). 
If this is not the case, then $x^2+x+\varepsilon\in\F[x]$ is an 
irreducible polynomial and we apply the argument in the previous paragraph to $\g_{6,7}^{(2)}(\varepsilon)\otimes \E$ where $\E=\F[x]/(x^2+x+\varepsilon)$.

\subsection{$\boldsymbol{Z(\g)}$ is a simple extension of $\boldsymbol{Z_p(\g)}$}
\label{sec:simple_ext}
Here we analyse the algebras $\g_{4}$, $\g_{5,2}$, $\g_{5,4}$, 
$\g_{6,10}$, $\g_{6,11}$, $\g_{6,12}$, $\g_{6,13}$, $\g_{6,14}$, $\g_{6,15}$, 
$\g_{6,16}$, $\g_{6,17}$, $\g_{6,19}(\varepsilon)$, $\g_{6,20}$, 
$\g_{6,21}(\varepsilon)$, and $\g_{6,26}$ with $\varepsilon\in\F^*$. 
Suppose that $\g$ is one of these algebras and let $z$ be the element 
that appears in Table~\ref{table:Zgpgen} 
in the intersection of the row corresponding to $\g$ and the $z_i$-column. 

\begin{lemma}
$z\in Z(\g)\setminus Z_p(\g)$, but $z^p\in Z_p(\g)$. 
 \end{lemma}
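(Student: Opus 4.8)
The plan is to treat the three assertions in turn, leaning on Theorem~\ref{lm} to convert the hard quantitative claim $z^p\in Z_p(\g)$ into a statement about field extensions and then using integral closure to return inside $Z_p(\g)$. First I would show $z\in Z(\g)$: since $z$ is the polynomial in the basis elements $x_1,\dots,x_k$ recorded in Table~\ref{table:Zgpgen}, it suffices to check $\ad(x_i)(z)=0$ for every generator $x_i$, where $\ad(x_i)$ is the derivation of $U(\g)$ extending $\ad x_i|_\g$. This is a finite computation with the structure constants, and it is the same Leibniz identity that makes $z$ central in characteristic zero (compare~\cite{Ooms1,Ooms2}); the hypothesis $p\geq\mathrm{cl}(\g)$ guarantees that the denominators occurring in the coefficients of $z$, all bounded by the nilpotency class, are units in $\F$, so the characteristic-zero identity reduces verbatim. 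For $z\notin Z_p(\g)$, I would invoke~\eqref{pcenter} and the PBW theorem: $Z_p(\g)$ is the $\F$-span of those ordered monomials $x_1^{c_1}\cdots x_k^{c_k}$ in which the exponents $c_1,\dots,c_d$ of the non-central generators are all divisible by $p$, whereas $z$ has a PBW term of total degree less than $p$ in which some $x_i$ with $i\leq d$ occurs to a power strictly between $0$ and $p$, so it cannot lie in this span.

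The crux is $z^p\in Z_p(\g)$, and here I would avoid a brute-force expansion of the noncommutative $p$-th power. Since $z\in Z(\g)$ we have $z\in K(\g)$, and $K(\g)$ is purely inseparable over $K_p(\g)$ by Theorem~\ref{lm}(5). Moreover $z\notin K_p(\g)$: otherwise $z$ would lie in $K_p(\g)\cap U(\g)$, which equals $Z_p(\g)$ because $z$ is integral over the integrally closed ring $Z_p(\g)$ and $K_p(\g)=\fracfield{Z_p(\g)}$, contradicting the previous step. A dimension count pins down the degree. Inspection of Tables~\ref{table:dim5}--\ref{table:dim6} gives $\dim\g-\dim C(\g)=3$, so $\dim_{K_p(\g)}D(\g)=p^3$ by Theorem~\ref{lm}(3), while $\dim_{K(\g)}D(\g)=p^{2m}$ with $m\geq1$ by Theorem~\ref{lm}(4); from the tower law $p^3=p^{2m}\cdot[K(\g):K_p(\g)]$ we are forced to have $m=1$ and $[K(\g):K_p(\g)]=p$. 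Hence $K_p(\g)\subsetneq K_p(\g)(z)\subseteq K(\g)$ has degree a power of $p$ dividing the prime $p$, so it equals $p$; by Theorem~\ref{Isaa1} the minimal polynomial of $z$ over $K_p(\g)$ is $x^p-z^p$, giving $z^p\in K_p(\g)$. Finally $z^p\in U(\g)$ is integral over $Z_p(\g)$ and lies in $K_p(\g)=\fracfield{Z_p(\g)}$, so $z^p\in Z_p(\g)$ because $Z_p(\g)$ is integrally closed.

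The main obstacle, in my view, is ensuring that it is the first $p$-th power, and not merely some higher power $z^{p^n}$, that lands in $Z_p(\g)$: pure inseparability alone only yields $z^{p^n}\in K_p(\g)$ for some $n$, and it is the exact equality $[K(\g):K_p(\g)]=p$ (equivalently $\dim_{K(\g)}D(\g)=p^2$) that forces $n=1$. Note that this argument must be made independently of Proposition~\ref{yhb}, which takes $z^p\in Z_p(\g)$ as a \emph{hypothesis} and would therefore be circular here. If the dimension count were unavailable, one would fall back on computing $z^p$ explicitly in the PBW basis and verifying directly that it is a polynomial in $x_1^p,\dots,x_d^p,x_{d+1},\dots,x_k$; the conceptual route above is what keeps the proof clean and uniform across the listed algebras.
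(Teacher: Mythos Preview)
Your dimension count contains a factual error that breaks the argument for most of the algebras in the list. You assert that ``Inspection of Tables~\ref{table:dim5}--\ref{table:dim6} gives $\dim\g-\dim C(\g)=3$'', but this holds only for $\g_4$, $\g_{5,2}$, $\g_{5,4}$, and $\g_{6,26}$. For every remaining algebra in Section~\ref{sec:simple_ext} ($\g_{6,10}$ through $\g_{6,21}(\varepsilon)$) the center is one-dimensional, so $\dim\g-\dim C(\g)=5$. The tower law then gives $p^{5}=p^{2m}\cdot[K(\g):K_p(\g)]$, and $m=1$ with $[K(\g):K_p(\g)]=p^{3}$ is not excluded by anything you have written. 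In that scenario the minimal polynomial of $z$ over $K_p(\g)$ could have degree $p^{2}$ or $p^{3}$, and you would only get $z^{p^{n}}\in Z_p(\g)$ for some $n\leq 3$, not $z^{p}\in Z_p(\g)$. The gap can be repaired by invoking Lemma~\ref{dimensao} to force $m\geq 2$ for these algebras (one checks case by case that suitable $x,y,z\in\g$ exist), which then yields $[K(\g):K_p(\g)]=p$ and your argument goes through; but this is exactly the computation the paper carries out in the \emph{next} lemma, so you are effectively borrowing from downstream.

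The paper avoids all of this by a direct, local argument: it observes that in each case $z=a_1+\cdots+a_k$ where the $a_i$ are monomials that, under the commutator bracket inside $U(\g)$, generate a nilpotent Lie algebra of class strictly less than $p$. The Jacobson-type identity (see~\cite[Lemma~1.2, Section~2.1]{strade98}) then gives $\bigl(\sum_i a_i\bigr)^{p}=\sum_i a_i^{p}$, and each $a_i^{p}$ visibly lies in $Z_p(\g)$. This route is elementary and uniform, needs no information about $D(\g)$ or $[K(\g):K_p(\g)]$, and is independent of the subsequent dimension analysis; your approach, once corrected, trades the noncommutative $p$-th power computation for a structural fact about $D(\g)$, but at the cost of importing Lemma~\ref{dimensao} earlier than the paper does.
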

\begin{proof}
The first statement follows by elementary, yet cumbersome, calculation. The fact that $z^p\in Z_p(\g)$ follows from the fact that, in each case,
$z$ can be written as a sum 
$a_1+\cdots+a_k$ such that the Lie algebra generated by $\{a_1,\ldots,a_k\}$ inside $U(\g_)$ is nilpotent of class smaller than $p$. Thus 
\[
\left(\sum_{i=1}^k a_i\right)^p=\sum_{i=1}^k a_i^p;
\]
see the discussion, in particular Lemma~1.2, in~\cite[Section~2.1]{strade98}. 
\end{proof}

\begin{lemma}
$Z_p(\g)[z]$ is integrally closed and  $Z(\g)=Z_p(\g)[z]$.
\end{lemma}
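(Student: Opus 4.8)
The plan is to realise $Z_p(\g)[z]$ as a quotient of a polynomial ring by a prime principal ideal and then to apply Serre's criterion in the packaged form of Proposition~\ref{nj}. By the previous lemma $\alpha := z^p$ lies in $Z_p(\g)$. I first note that $\alpha \notin Z_p(\g)^p$: if $\alpha = \beta^p$ with $\beta \in Z_p(\g)$, then $z$ and $\beta$ commute in $U(\g)$ and $(z-\beta)^p = z^p - \beta^p = 0$, whence $z = \beta \in Z_p(\g)$ since $U(\g)$ is a domain, contradicting $z \notin Z_p(\g)$. As $Z_p(\g)$ is a polynomial algebra (Theorem~\ref{lm}(1)) and hence integrally closed, Lemma~\ref{corq} shows that $f(x) = x^p - \alpha$ is prime in $Z_p(\g)[x]$. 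Because $Z_p(\g)$ is a finitely-generated noetherian domain and $z \in Z(\g) \setminus Z_p(\g)$ is a root of $f$ with $Z(\g)$ integral over $Z_p(\g)$ (Theorem~\ref{lm}(5)), Lemma~\ref{rr}(1) gives
\[
 Z_p(\g)[z] \cong Z_p(\g)[x]/(f(x)).
\]

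To prove that $Z_p(\g)[z]$ is integrally closed I apply Proposition~\ref{nj} with the regular domain $A = Z_p(\g) = \F[y_1,\ldots,y_k]$, the $y_i$ being the polynomial generators in~\eqref{pcenter}, with $s = 1$, $\bft_1 = x$, and with the prime polynomial $f = x^p - \alpha$. The module $\der{Z_p(\g)[x]}$ is generated over the ring by $\partial/\partial y_1,\ldots,\partial/\partial y_k,\partial/\partial x$, and since $\partial f/\partial x = p x^{p-1} = 0$ in characteristic $p$, the associated ideal is
\[
 J = \left( x^p - \alpha,\ \partial\alpha/\partial y_1,\ \ldots,\ \partial\alpha/\partial y_k \right).
\]
The crux of the argument, and the step I expect to be the main obstacle, is to verify for each algebra in the list that $\height J \geq 3$. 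This requires writing $\alpha = z^p$ explicitly as a polynomial in $y_1,\ldots,y_k$ for the given $z$ (from Table~\ref{table:Zgpgen}) and checking that $x^p-\alpha$ together with the partial derivatives of $\alpha$ cut out a subvariety of codimension at least three; this is elementary but genuinely case-dependent. Granting it, Proposition~\ref{nj} yields that $Z_p(\g)[z]$ is an integrally closed domain.

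It remains to identify the fraction fields. Since $z$ is integral over the integrally closed ring $Z_p(\g)$ but $z \notin Z_p(\g)$, we have $z \notin K_p(\g) = \fracfield{Z_p(\g)}$, so the extension $K_p(\g) \subset K_p(\g)(z)$ is proper. One checks from Tables~\ref{table:dim5}--\ref{table:dim6} that $\dim\g - \dim C(\g) = 3$, so taking $s = 1 = \dim\g - \dim C(\g) - 2$ in Proposition~\ref{yhb} gives $K_p(\g)(z) = K(\g)$; that is, $\fracfield{Z_p(\g)[z]} = K(\g)$.

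Finally I combine the two facts. We have the tower $Z_p(\g) \subseteq Z_p(\g)[z] \subseteq Z(\g)$, with $Z(\g)$ integral over $Z_p(\g)$ and hence over $Z_p(\g)[z]$. Every element of $Z(\g)$ lies in $K(\g) = \fracfield{Z_p(\g)[z]}$ and is integral over $Z_p(\g)[z]$, so the integral closedness of $Z_p(\g)[z]$ forces $Z(\g) \subseteq Z_p(\g)[z]$. The reverse inclusion being clear, I conclude $Z(\g) = Z_p(\g)[z]$.
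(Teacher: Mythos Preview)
Your argument for the integral closedness of $Z_p(\g)[z]$ is correct and essentially identical to the paper's: realise $Z_p(\g)[z]$ as a quotient by a prime principal ideal via Lemma~\ref{corq} and Lemma~\ref{rr}, then apply Proposition~\ref{nj} after a case-by-case height check. The paper is slightly more explicit about how that height check goes (one finds two variables $t_i,t_j$ lying in $J$ with $\hat f\notin(t_i,t_j)$, giving a chain of length three below any prime containing $J$), but the structure is the same.

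There is, however, a genuine gap in your identification of fraction fields. You assert that ``one checks from Tables~\ref{table:dim5}--\ref{table:dim6} that $\dim\g-\dim C(\g)=3$,'' and then invoke Proposition~\ref{yhb} with $s=1=\dim\g-\dim C(\g)-2$. This is only true for four of the fifteen algebras on the list, namely $\g_4$, $\g_{5,2}$, $\g_{5,4}$ and $\g_{6,26}$. For the remaining eleven six-dimensional algebras ($\g_{6,10}$ through $\g_{6,21}(\varepsilon)$) the center is one-dimensional, so $\dim\g-\dim C(\g)=5$, and Proposition~\ref{yhb} with $s=1$ does not apply. For these algebras an additional step is needed: the paper observes that each of them contains elements $x,y,z$ with $x\notin C(\g)$, $[x,y]=[x,z]=0$ but $[y,z]\neq 0$, so Lemma~\ref{dimensao} gives $\dim_{K(\g)}D(\g)\geq p^4$. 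Combined with $\dim_{K_p(\g)}D(\g)=p^5$ and $\dim_{K_p(\g)}K_p(\g)(z)=p$, this forces $\dim_{K_p(\g)(z)}K(\g)=p^{5-2m-1}$ with $m\geq 2$, hence $K_p(\g)(z)=K(\g)$. Without this extra argument your proof does not establish $Z(\g)=Z_p(\g)[z]$ for the majority of the algebras in question.
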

\begin{proof}
Set $\alpha=z^p$; then $\alpha\in Z_p(\g)$. By Lemma~\ref{corq}, the polynomial $f=t^p-\alpha$
is prime in $Z_p(\g)[t]$ and hence Lemma~\ref{rr} implies that 
\[
  Z_p(\g)[z]\cong Z_p(\g)[t]/(f).
\]
Note that $Z_p(\g)$ is a polynomial algebra in $\dim\g$ variables and hence 
$Z_p(\g)[t]$ is also a polynomial algebra in $\dim\g+1$ variables 
(one of which is $t$). Letting $s=\dim \g+1$, 
there is an isomorphism $\psi:Z_p(\g)[t]\mapsto \F[\bft_s]$
mapping the generators of the $p$-center into the variables $t_1,\ldots,t_{s-1}$ 
and mapping $t$ to $t_{s}$. 
Thus 
\[
  Z_p(\g)[z]\cong\F[\bft_s]/(\hat f) 
\]
where $\hat f=\psi(f)$. For instance, in the case of 
$\g=\g_4$, we have $z=x_3^2-2x_2x_4$. The elements $x_1^p$, $x_2^p$, $x_3^p$, $x_4$ of $Z_p(\g)$ correspond to the
indeterminates $t_1$, $t_2$, $t_3$, and $t_4$, respectively, and so $\hat f=t_5^p-t_3^2+2t_2t_4^p$. Thus 
\[
  Z_p(\g_4)[z]\cong \F[\bft_5]/(\hat f)=\F[t_1,t_2,t_3,t_4,t_5]/(t_5^p-t_3^2+2t_2t_4^p).
\]
Returning to the general case, let $J$ denote the ideal of $\F[\bft_s]$ defined as
\[
  J=\left(\hat f,d(\hat f)\mid d\in\mbox{Der}(\F[\bft_s])\right).
\]
Considering the derivations $\mbox{Der}(\F[\bft_s])$, it is possible to find 
distinct $i,j\in\{1,\ldots,s\}$ such that $t_i,t_j\in J$, 
\[
  0\subset (t_i)\subset (t_i,t_j)
\]
and $\hat f\not \in (t_i,t_j)$. Therefore if $P$ is a prime ideal containing $J$, then 
\[
  0\subset (t_i)\subset( t_i,t_j)\subset P
\]
is a strictly increasing chain of prime ideals which shows that $\height P\geq 3$,
and hence $\height{J}\geq 3$. Now Proposition~\ref{nj} gives that 
$\F[\bft_s]/(\hat f)$ is integrally closed and so $Z_p(\g)[z]$ is 
also integrally closed.

Let us now verify that $K_p(\g)(z)=K(\g)$. In the case of $\g=\g_4$, $\g=\g_{5,2}$, $\g=\g_{5,4}$, or $\g=\g_{6,26}$, 
$\dim\g-\dim C(\g)=3$, and so $K(\g)=K_p(\g)(z)$ holds by Proposition~\ref{yhb}. Let us 
consider the other cases simultaneously. By Theorem~\ref{lm}, 
\begin{align*}
  p^{\dim\g-\dim C(\g)}&=\dim_{K_p(\g)}D(\g)\\&=\dim_{K_p(\g)} K_p(\g)(z)\cdot 
  \dim_{K_p(\g)(z)} K(\g)\cdot \dim_{K(\g)}D(\g)\\&=p^{2m+1}\dim_{K_p(\g)(z)} K(\g)
\end{align*}
for some $m\geq 1$. Hence 
\[
  \dim_{K_p(\g)(z)}K(\g)=p^{\dim\g-\dim C(\g)-2m-1}.
\]
Since $\g$ is nilpotent and $\dim\g\leq 6$, we have 
 $\dim\g-\dim C(\g)\leq 5$, and hence $K_p(\g)(z)=K(\g)$ follows once we show that 
 $m\geq 2$. This, however, follows from Lemma~\ref{dimensao}, since in all cases, 
 there are $x\not\in C(\g)$ such that $[x,y]=[x,z]=0$, 
 but $[y,z]\neq 0$. 

 We have thus shown that $Z(\g)$ is an integral extension of the integrally
 closed ring $Z_p(\g)[z]$ and that the fraction fields of $Z_p(\g)[z]$ and 
 $Z(\g)$ coincide. This implies that $Z(\g)=Z_p(\g)[z]$. 
\end{proof}

\subsection{The algebras $\g_{5,5}$, $\g_{6,18}$, $\g_{6,25}$}\label{sec:threealgs}

Suppose that $\g$ is one of these algebras and let $z_1,\ldots,z_k$ be the 
elements in the intersection of the $z_i$-column and 
the corresponding row of Table~\ref{table:Zgpgen} (in particular, $k=2$ or 
$k=3$). 
It is a simple calculation to show that $z_i\in Z(\g)$ holds for all $i$. 
Set $u=x_5$ if 
$\g=\g_{5,5}$ or $\g=\g_{6,25}$ and set $u=x_6$ for $\g=\g_{6,18}$. Also 
set $n=\dim\g$.  For a domain $A$ and $a\in A$, we denote by 
$A_a$ the localization of $A$ by the monoid generated by $a$. 

The center $Z(\g)$ is described by the following theorem.

  \begin{theorem}\label{th:closures}
    The integral closures of $Z_p(\g)[z_1,\ldots,z_k]$ in 
    $\fracfield{Z_p(\g)[z_1,\ldots,z_k]}$ and in the localization 
    $Z_p(\g)_{u}[z_1,\ldots,z_k]$ coincide 
    and this integral closure is equal to $Z(\g)$. 
  \end{theorem}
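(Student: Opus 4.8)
The plan is to reduce the statement to two facts and then combine them formally. Write $R = Z_p(\g)[z_1,\ldots,z_k]$ and $S = Z_p(\g)_u[z_1,\ldots,z_k]$, so that $R \subseteq S \subseteq \fracfield R$ and $\fracfield S = \fracfield R$. It suffices to prove: (I) $Z(\g)$ is the integral closure of $R$ in $\fracfield R$; and (II) $S$ is integrally closed. Granting these, let $B$ be the integral closure of $R$ in $\fracfield R$ and $B'$ the integral closure of $R$ in $S$. Then $B' = B \cap S \subseteq B$, while by (II) any element of $\fracfield R = \fracfield S$ that is integral over $R$ is integral over $S$ and hence lies in $S$; thus $B \subseteq S$ and so $B = B \cap S = B'$. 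Combined with (I), this shows that both integral closures equal $Z(\g)$.

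For (I), first note that $\fracfield R = K(\g)$. Since $z_i^p \in Z_p(\g)$ while $z_i \notin Z_p(\g)$, the chain $K_p(\g) \subset K_p(\g)(z_1) \subset \cdots \subset K_p(\g)(z_1,\ldots,z_k)$ is a tower of proper purely inseparable extensions as in~\eqref{tower1}; since $k = \dim\g - \dim C(\g) - 2$ for each of the three algebras, Proposition~\ref{yhb} gives $K_p(\g)(z_1,\ldots,z_k) = K(\g)$ (and $\dim_{K(\g)}D(\g) = p^2$), and this field is exactly $\fracfield R$. These verifications proceed as in the preceding subsections. Now $Z(\g)$ is an integral extension of $Z_p(\g)$ by Theorem~\ref{lm}, hence integral over $R$; its fraction field is $K(\g) = \fracfield R$; and by Zassenhaus' theorem (\cite[Lemma~6]{zassenhaus53}) the center $Z(\g)$ is integrally closed. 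These three facts identify $Z(\g)$ with the integral closure of $R$ in $\fracfield R$, which is (I).

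The heart of the argument is (II), and the governing idea is that the codimension-one part of the non-regular locus of $R$ is concentrated on $V(u)$, so that inverting $u$ produces an integrally closed ring. To make this precise I would present $S$ as $A[\bft_k]/I$, where $A = Z_p(\g)_u$ is a regular domain and $I = (t_1^p - \alpha_1,\ldots,t_k^p - \alpha_k)$ with $\alpha_i = z_i^p \in Z_p(\g)$; that this is a faithful presentation follows, just as in the case $k=1$ treated by Lemmas~\ref{rr} and~\ref{corq}, from the tower~\eqref{tower1} being proper. The elements $t_i^p - \alpha_i$ form a regular sequence, so $S$ is a complete intersection and therefore Cohen--Macaulay; by Serre's criterion (Theorem~\ref{serrecrit}) it remains only to verify condition $(\mbox{R}_1)$. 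In characteristic $p$ the partial derivatives $\partial_{t_j}(t_i^p - \alpha_i)$ vanish identically, so the relevant Jacobian data reduce to the elements $d(\alpha_i)$ with $d \in \der A$; inverting $u$ is exactly what is needed so that the locus where these Jacobian minors vanish has codimension at least two in $\spec S$, yielding $(\mbox{R}_1)$ by a Jacobian-criterion argument in the spirit of Proposition~\ref{nj} (adapted from a principal ideal to a complete intersection).

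Establishing this codimension estimate is where the proof becomes genuinely computational and must be carried out separately for $\g_{5,5}$, $\g_{6,18}$ and $\g_{6,25}$: from the explicit bracket relations one writes down the $\alpha_i = z_i^p$ and the derivatives $d(\alpha_i)$, $d \in \der A$, and then checks --- by hand as in~\cite{vanderlei} or with Macaulay2 --- that the ideal they generate, together with $I$, cuts out a locus of codimension at least two once $u$ is inverted. I expect this estimate to be the main obstacle; it is precisely the codimension-one singularity of $R$ along $\{u = 0\}$ that makes $R$ fail to be integrally closed and that forces the passage to the localization $S$.
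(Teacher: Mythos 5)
Your proposal follows essentially the same route as the paper: you reduce the theorem to (I) identifying $Z(\g)$ with the integral closure of $Z_p(\g)[z_1,\ldots,z_k]$ in its fraction field (via Proposition~\ref{yhb}, integrality of $Z(\g)$ over $Z_p(\g)$, and Zassenhaus' theorem that $Z(\g)$ is integrally closed) and (II) showing that the localization $Z_p(\g)_u[z_1,\ldots,z_k]$ is a Cohen--Macaulay complete intersection satisfying $(\mbox{R}_1)$ by a Jacobian-criterion argument after inverting $u$, which is precisely the content of the paper's Lemma~\ref{lem:g55closed}. The only difference is that you defer the case-by-case Jacobian computations for $\g_{5,5}$, $\g_{6,18}$ and $\g_{6,25}$ that the paper carries out explicitly; your description of what must be verified there (the derivatives $d(\alpha_i)$ and the codimension estimate made possible by inverting $u$) accurately matches the paper's calculations.
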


Since $z^p$ is an expression of commuting variables, $z_i^p\in Z_p(\g)$; set $\alpha_i=z_i^p$ for all 
$i\in\{1,\ldots,k\}$. Consider the polynomial ring 
$Z_p(\g)[t_1,\ldots,t_k]$ and set $f_i=t_i^p-\alpha_i$ 
for all $i\in\{1,\ldots,k\}$. The algebra $Z_p(\g)$
is isomorphic to the polynomial algebra $\F[\bft_n]$ and so 
$Z_p(\g)[t_1,\ldots,t_k]\cong \F[\bft_{n+k}]$. Let $f\mapsto \hat f$ denote 
the obvious isomorphism between $Z_p(\g)[t_1,\ldots,t_k]$ and $\F[\bft_{n+k}]$ (as in Section~\ref{sec:simple_ext}).

First we present a lemma.

\begin{lemma}\label{lem:g55closed}
  We have that $Z_p(\g)[z_1,\ldots,z_k]\cong \F[\bft_{n+k}]/(\hat f_1,\ldots,\hat f_k)$. Furthermore,  the localization $Z_p(\g)_{u}[z_1,\ldots,z_k]$ is a Cohen-Macaulay ring that satisfies Serre's condition $(R_k)$ for all $k\geq 0$. 
  In particular  $Z_p(\g)_{u}[z_1,\ldots,z_k]$ is integrally closed. 
\end{lemma}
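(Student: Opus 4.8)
\emph{The isomorphism.} I would start from the surjective $Z_p(\g)$-algebra homomorphism $\varphi\colon Z_p(\g)[t_1,\ldots,t_k]\to U(\g)$ determined by $t_i\mapsto z_i$, whose image is $Z_p(\g)[z_1,\ldots,z_k]$. As $z_i^p=\alpha_i$, every $f_i=t_i^p-\alpha_i$ lies in $\ker\varphi$, so $\varphi$ induces a surjection
\[
  \bar\varphi\colon M:=Z_p(\g)[t_1,\ldots,t_k]/(f_1,\ldots,f_k)\longrightarrow Z_p(\g)[z_1,\ldots,z_k].
\]
Since each $f_i$ is monic in $t_i$, the $f_i$ form a regular sequence and $M$ is a free $Z_p(\g)$-module of rank $p^k$ on the monomials $t_1^{e_1}\cdots t_k^{e_k}$ with $0\leq e_i<p$; under $Z_p(\g)\cong\F[\bft_n]$ it is the complete intersection $\F[\bft_{n+k}]/(\hat f_1,\ldots,\hat f_k)$. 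The first assertion of the lemma therefore amounts to the injectivity of $\bar\varphi$. Because $M$ is $Z_p(\g)$-free and $U(\g)$ is a free, hence torsion-free, $Z_p(\g)$-module by Theorem~\ref{lm}, it suffices to check injectivity after applying $-\otimes_{Z_p(\g)}K_p(\g)$. There $M\otimes K_p(\g)=K_p(\g)[t_1,\ldots,t_k]/(f_1,\ldots,f_k)$ surjects onto the subfield $K_p(\g)(z_1,\ldots,z_k)$ of $D(\g)$, and this surjection of $K_p(\g)$-spaces is an isomorphism precisely when
\[
  \dim_{K_p(\g)}K_p(\g)(z_1,\ldots,z_k)=p^k.
\]
By Proposition~\ref{yhb} this holds as soon as the tower~\eqref{tower1} is strictly increasing, which I would verify for each of $\g_{5,5}$, $\g_{6,18}$, $\g_{6,25}$ by reading off the explicit $z_i$ and checking $z_i\notin K_p(\g)(z_1,\ldots,z_{i-1})$ from their leading PBW terms.

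\emph{Cohen--Macaulayness and regularity of the localization.} Being a quotient of the regular ring $\F[\bft_{n+k}]$ by a regular sequence, $M\cong\F[\bft_{n+k}]/(\hat f_1,\ldots,\hat f_k)$ is Cohen--Macaulay, and localizing at $u$ preserves this; moreover the isomorphism of the first part localizes to $Z_p(\g)_u[z_1,\ldots,z_k]\cong\bigl(\F[\bft_{n+k}]/(\hat f_1,\ldots,\hat f_k)\bigr)_{\hat u}$, where $\hat u$ is the variable corresponding to the central generator $u$. For the conditions $(\mbox{R}_j)$ for all $j$ — that is, regularity of this localization — I would invoke the Jacobian criterion (\cite[Theorem~30.4]{matsumura86}). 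In characteristic $p$ one has $\partial\hat f_i/\partial t_{n+l}=0$, so the Jacobian of $(\hat f_1,\ldots,\hat f_k)$ collapses to the $k\times n$ matrix
\[
  \mathcal J=\bigl[\,\partial\hat\alpha_i/\partial t_j\,\bigr]_{1\leq i\leq k,\ 1\leq j\leq n}.
\]
The decisive step is to produce $k$ columns $j_1,\ldots,j_k\in\{1,\ldots,n\}$ whose $k\times k$ minor of $\mathcal J$ equals $\hat u^{\,m}$ times a unit of $\F[\bft_{n+k}]/(\hat f_1,\ldots,\hat f_k)$. Once such a minor is exhibited it becomes a unit after inverting $\hat u$, so $\mathcal J$ has rank $k$ at every prime of $Z_p(\g)_u[z_1,\ldots,z_k]$; the Jacobian criterion then makes each such local ring regular. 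Hence the localization is regular, a fortiori it satisfies $(\mbox{R}_j)$ for all $j$, and Serre's criterion (Theorem~\ref{serrecrit}) shows it is integrally closed.

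\emph{The main obstacle.} The real work sits in the minor computation just described: expressing each $\alpha_i=z_i^p$ in the $p$-center coordinates $t_1,\ldots,t_n$, differentiating, and isolating the $k$ columns in which the central generator $u$ appears as the determinantal factor. This is precisely where $\g_{5,5}$, $\g_{6,18}$, $\g_{6,25}$ diverge from the algebras of Section~\ref{sec:simple_ext}: here the minor is genuinely divisible by $u$, so regularity is available only after inverting $u$, consistently with $Z_p(\g)[z_1,\ldots,z_k]$ failing to be integrally closed for these three algebras. A minor technical point, to be handled as in Proposition~\ref{nj}, is that over an imperfect $\F$ a full-rank partial-derivative Jacobian certifies smoothness of $Z_p(\g)_u[z_1,\ldots,z_k]$ over $\F$ and hence regularity, the relevant derivations $\partial/\partial t_l$ generating $\der{\F[\bft_{n+k}]}$.
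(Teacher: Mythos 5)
Your proposal is correct in structure, and its second half (Cohen--Macaulayness via the regular sequence $\hat f_1,\ldots,\hat f_k$, then $(\mbox{R}_j)$ for all $j$ by exhibiting a $k\times k$ Jacobian minor equal to a power of $\hat u$ times a nonzero constant, then Serre's criterion) is essentially the paper's argument; the paper carries out the minor computation explicitly for each of the three algebras, obtaining $6t_5^{3p}$, $-12t_6^{4p}$ and $2t_5^2$ respectively, so the ``unit'' really is a nonzero scalar once $p$ exceeds the nilpotency class --- a point worth stating, since it is where the hypothesis on $p$ enters. Where you genuinely diverge is the presentation isomorphism $Z_p(\g)[z_1,\ldots,z_k]\cong\F[\bft_{n+k}]/(\hat f_1,\ldots,\hat f_k)$: the paper proves it by iterating Lemma~\ref{corq} (primality of $t^p-\alpha$ over an integrally closed base) together with the height argument of Lemma~\ref{rr}, which forces it to interpose a proof that each intermediate ring $Z_p(\g)[z_1,\ldots,z_{i-1}]$ is integrally closed before adjoining $z_i$ (e.g.\ identifying $Z_p(\g_{5,5})[z_1]$ with a polynomial ring over $Z(\g_4)$). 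Your route --- observe that $Z_p(\g)[\bft_k]/(f_1,\ldots,f_k)$ is $Z_p(\g)$-free of rank $p^k$, use torsion-freeness of $U(\g)$ to reduce injectivity to the generic fibre, and match dimensions against $\dim_{K_p(\g)}K_p(\g)(z_1,\ldots,z_k)=p^k$ from Proposition~\ref{yhb} --- avoids those intermediate normality checks entirely and is arguably cleaner; it is legitimate because $K_p(\g)[z_1,\ldots,z_k]$ is a finite-dimensional domain over a field, hence equals $K_p(\g)(z_1,\ldots,z_k)$. The cost of both approaches is the same deferred computation, namely strictness of the tower~\eqref{tower1} (respectively $\alpha_i\notin(\,\cdot\,)^p$), which you correctly identify but do not carry out; together with the explicit minors, that is the only content missing from your sketch, and it is a finite check against Table~\ref{table:Zgpgen}.
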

\begin{proof}
We present the proof separately for the three algebras that occur in this section.

Consider first $\g=\g_{5,5}$. In this case, $k=2$, 
\[
z_1=x_4^2-2x_3x_5\quad\mbox{and}\quad z_2=3x_2x_5^2-3x_3x_4x_5+x_4^3
\]
while
\[
  \hat f_1=t_6^p-t_4^2+2t_3t_5^p\quad\mbox{and}\quad 
  \hat f_2=t_7^p-3t_2t_5^{2p}+3t_3t_4t_5^p-t_4^3.
\]
First apply Lemma~\ref{corq} to deduce that $f\in Z_p(\g)[t]$ 
(and hence $\hat f_1\in\F[\bft_6]$) is prime, and thus, by Lemma~\ref{rr}, 
\[
  Z_p(\g)[z_1]\cong Z_p(\g)[t_1]/(f_1)\cong \F[\bft_6]/(\hat f_1).
\]
Furthermore
notice that $Z_p(\g)[t_1]/(f_1)$ is isomorphic to a polynomial ring in one variable
over $Z(\g_{4})$ (this is because $\g_4$ is isomorphic to the 
subalgebra generated by $x_1$ and $x_3$ in $\g_{5,5}$). It was established in Section~\ref{sec:simple_ext} that $Z(\g_{4})$ is 
integrally closed, which shows that $Z_p(\g)[z_1]\cong \F[\bft_6]/(\hat f_1)$ is integrally closed.

Note that 
$f_2\in Z_p(\g)[z_1][t_2]$  
is prime by Lemma~\ref{corq}, and so Lemma~\ref{rr} implies 
that 
\begin{align*}
  Z_p(\g)[z_1,z_2]&=Z_p(\g)[z_1][z_2]\cong Z_p(\g)[z_1][t_2]/(f_2)\cong Z_p(\g)[t_1,t_2]/(f_1,f_2)\\&\cong \F[\bft_7]/(\hat f_1,\hat f_2).
\end{align*}
Now 
\[
  Z_p(\g)_{x_5}[z_1,z_2]\cong Z_p(\g)[z_1,z_2]_{x_5}\cong \F[\bft_7]_{t_5}/(\hat f_1,\hat f_2).
\]
Write $\F[\bft_7]_{t_5}=\F[\bft_6]_{t_5}[t_7]$ and note that $\F[\bft_6]_{t_5}$ 
 is integrally closed, since it is a localization of the integrally closed ring $\F[\bft_6]$~\cite[Proposition~8.10]{Kemper09}. 
 We claim that $\F[\bft_7]_{t_5}/(\hat f_{1}, \hat f_{2})$ is Cohen-Macaulay. 
 First, $\F[\bft_7]_{t_5}$ is the localization of a Cohen-Macaulay 
 ring and so it is also Cohen-Macaulay~\cite[page~136]{matsumura86}. 
 Since $\hat f_1\in \F[\bft_7]_{t_5}$ is prime (Lemma~\ref{corq}), 
 $(\hat f_1)$ is a prime ideal and so 
 $\F[\bft_7]_{t_5}/(\hat f_1)$ has no zero-divisors. Further, $\hat f_2\not\in (\hat f_1)$, 
 and hence $\hat f_{1}$ and $\hat f_{2}$ form a regular sequence in 
 $\F[\bft_7]_{t_5}$.
 Thus it follows from~\cite[Exercise~17.4]{matsumura86} that 
 $\F[\bft_7]_{t_5}/\big(\hat f_{1}, \hat f_{2}\big)$ 
is Cohen-Macaulay.
  
Consider the following derivations of $\F[\bft_7]_{t_5}$: 
\[
  d_{1}=\frac{\partial}{\partial t_{2}}\quad\mbox{and}\quad 
  d_{2}=\frac{\partial}{\partial t_{3}}.
\] Then 
\[
  d_{1}(\hat f_{1})=0,\quad d_{1}(\hat f_{2})=-3t_{5}^{2p}, \quad d_{2}(\hat f_{1})=2t_{5}^p, \quad d_{2}(\hat f_{2})=3t_{4}t_{5}^p
\] 
and so the corresponding Jacobian matrix is 
\[
  (d_{i}\hat f_{j})=\begin{pmatrix} 0 & -3t_{5}^{2p} \\ 2t_{5}^p & 3t_{4}t_{5}^p \end{pmatrix}
\] whose determinant is $6t^{3p}_{5}$ which is nonzero since $p>3$. 
  
  Let $\bar{P}\in \spec{\F[\bft_7]_{t_5}/\big(\hat f_{1}, \hat f_{2}\big)}$ with $\height{\bar{P}}\leq k$. Then there exists $P$ in the prime spectrum  $\spec{\F[\bft_7]_{t_5}}$
  with $t_{5}\not\in P$ and $(\hat f_{1}, \hat f_{2})\subseteq P$
  such that $\bar{P}=P/\big(\hat f_{1}, \hat f_{2}\big)$. Since  $t_{5}\not\in P$,
  $\det(d_{i}\hat f_{j})\not\in P$, and so the Jacobian Criterion for Regularity (see~\cite[Theorem~30.4]{matsumura86}) implies that 
   \[
   (\F[\bft_7]_{t_5})_{P}/(\hat f_{1}, \hat f_{2})_{P}\cong (\F[\bft_7]_{t_5}/(\hat f_{1}, \hat f_{2}))_{\bar{P}}\] 
   is a regular ring. Thus $\F[\bft_7]_{t_5}/\big(\hat f_{1}, \hat f_{2}\big)$ 
   satisfies Serre's condition $(\mbox{R}_{k})$ for all $k\geq 0$. Now it follows by Theorem~\ref{serrecrit} that 
    $\F[\bft_7]_{t_5}/(\hat f_{1}, \hat f_{2})$ is integrally closed and so is $Z_p(\g)_{x5}[z_1, z_2]$.

Let us now consider the case when $\g=\g_{6,18}$. Now $k=3$, 
\[
  z_1=x_5^2-2x_4x_6,\quad z_2=x_5^3-3x_4x_5x_6+3x_3x_6^2,\quad\mbox{and}\quad
  z_3=x_4^2-2x_3x_5+2x_2x_6,
\] 
while 
\begin{align*}
  \hat f_1&=t_7^p-t_5^2+2t_4t_6^p;\\
  \hat f_2&=t_8^p-t_5^3+3t_4t_5t_6^p-3t_3t_6^{2p};\\
  \hat f_3&=t_9^p-t_4^2+2t_3t_5-2t_2t_6^p.
\end{align*}
Note that $Z_p(\g)_{x_6}[z_1,z_2]$ 
is isomorphic to a polynomial ring over $Z_p(\g_{5,5})_{x_5}[z_1,z_2]$ and 
we already showed that this ring is 
integrally closed. Furthermore
\[
  Z_p(\g)_{x_6}[z_1,z_2]\cong Z_p(\g)_{x_6}[t_1,t_2]/(f_1,f_2)\cong\F[\bft_9]_{x_6}/(\hat f_1,\hat f_2).
\]
As $f_3=t_3^p-\alpha_3\in Z_p(\g)_{x_6}[z_1,z_2][t_3]$
is a prime polynomial (Lemma~\ref{corq}),  Lemma~\ref{rr} implies
that 
\begin{align*}
  Z_p(\g)_{x_6}[z_1,z_2,z_3]&\cong Z_p(\g)_{x_6}[z_1,z_2][t_3]/(f_3)\cong  Z_p(\g)_{x_6}[t_1,t_2,t_3]/(f_1,f_2,f_3)\\&\cong
  \F[\bft_9]_{x_6}/(\hat f_1,\hat f_2,\hat f_3).
\end{align*}

We conclude as in the previous case that 
$\F[\bft_9]_{t_6}$ is Cohen--Macaulay. Further, the sequence $\hat f_{1}$, $\hat 
f_{2}$, $\hat f_{3}$ 
is regular in $\F[\bft_9]_{t_6}$, as $\hat f_{1}$ is not a zero-divisor in $\F[\bft_9]_{t_6}$, 
$\hat f_{2}\not\in(\hat f_{1})$ having a term that depends only on $t_{8}$, and 
$\hat f_{3}\not\in(\hat f_{1}, \hat f_{2})$ having a term that depends only on $t_{9}$. 
Thus $\F[\bft_9]_{t_6}/\big(\hat f_{1}, \hat f_{2}, \hat f_{3}\big)$ is Cohen-Macaulay. 
Let us show that  
$\F[\bft_9]_{t_6}/\big(\hat f_{1}, \hat f_{2}, \hat f_{3}\big)$ satisfies Serre's condition $(R_{k})$ for all $k\geq0$. 

Consider the following derivations: $D_{1}=\partial/\partial t_{2}$, 
$D_{2}=\partial/\partial t_{3}$ and $D_{3}=\partial/\partial t_{4}$. Then, 
\begin{align*}
D_{1}(\hat f_{1})&=0, &D_{1}(\hat f_{2})&=0, &D_{1}(\hat f_{3})&=-2t_{6}^{p},\\
D_{2}(\hat f_{1})&=0, &D_{2}(\hat f_{2})&=-3t_{6}^{2p}, &D_{2}(\hat f_{3})&=2t_{5},\\
D_{3}(\hat f_{1})&=2t_{6}^{p}, &D_{3}(\hat f_{2})&=3t_{5}t_{6}^{p}, &D_{3}(\hat f_{3})&=-2t_{4}.
\end{align*}
The corresponding Jacobian matrix is 
\[
  (D_{i}\hat f_{j})=\begin{pmatrix} 0 & 0 & -2t_{6}^{p} \\ 0 & -3t_{6}^{2p} & 2t_{5} \\ 2t_{6}^{p} & 3t_{5}t_{6}^{p} & -2t_{4} \end{pmatrix}
  \] 
and $\det(D_{i}\hat f_{j})=-12t^{4p}_{6}\neq0$, since $p\geq5$. 

Let $\bar{P}\in \mbox{Spec}\big(\F[\bft_9]_{t_6}/\big(\hat f_{1}, \hat f_{2}, \hat f_{3}\big)\big)$. Then, $\bar{P}=P/\big(\hat f_{1}, \hat f_{2}, \hat f_{3}\big)$ for some $P\in \mbox{Spec}(\F[\bft_9]_{t_6})$ with $t_{6}\not\in P$ and $(\hat f_{1}, \hat f_{2}, \hat f_{3})\subseteq P$. Therefore, $\det(D_{i}\hat f_{j})\not\in P$, 
as $t_{6}\not\in P$. By the Jacobian Criterion for Regularity, 
we obtain that $(\F[\bft_9]_{t_6})_{P}/\big(\hat f_{1}, \hat f_{2}, \hat f_{3}\big)_{P}$ is a regular ring, that is, $\big(\F[\bft_9]_{t_6}/\big(\hat f_{1}, \hat f_{2}, \hat f_{3}\big)\big)_{\bar{P}}$ is regular. Now it follows that $\F[\bft_9]_{t_6}/\big(\hat f_{1}, \hat f_{2}, \hat f_{3}\big)$ 
satisfies Serre's condition $(R_k)$ for all $k\geq0$. Hence Theorem~\ref{serrecrit} implies that  $\F[\bft_9]_{t_6}/\big(\hat f_{1}, \hat f_{2}, \hat f_{3}\big)$  and $Z_p(\g)_{x_6}[z_1,z_2,z_3]$ are integrally closed.

Consider finally the case when $\g=\g_{6,25}$. In this case, $k=2$, 
\[
  z_1=x_3^2-2x_2x_5\quad\mbox{and}\quad z_2=x_3x_6-x_4x_5,
\]
while 
\[
  \hat f_1=t_7^p-t_3^2+2t_2t_5\quad\mbox{and}\quad \hat f_2=
  t_8^p-t_3t_6^p+t_4t_5.
\]
We can use the same argument that we used for $\g_{5,5}$ to show
that 
\[
  Z_p(\g)[z_1,z_2]\cong Z_p(\g)[t_1,t_2]/(f_1,f_2)\cong\F[\bft_8]/(\hat f_1,\hat f_2)
\]
and that $\F[\bft_8]_{t_5}/(\hat f_1,\hat f_2)$ is Cohen--Macaulay. 

Let us show that 
$\F[\bft_8]_{t_5}/\big(\hat f_{1}, \hat f_{2}\big)$ satisfies Serre's condition 
$(R_{k})$ for all $k\geq0$.
Consider $D_{1}=\partial/\partial t_{2}$ and  $D_{2}=\partial/\partial t_{4}$. 
Then, 
\[
D_{1}(\hat f_{1})=2t_{5}, \quad D_{1}(\hat f_{2})=0, \quad D_{2}(\hat f_{1})=0 \quad \text{and} \quad D_{2}(\hat f_{2})=t_{5}.
\] 
The corresponding Jacobian matrix is 
\[
  (D_{i}\hat f_{j})=\begin{pmatrix} 2t_{5} & 0 \\ 0 & t_{5} \end{pmatrix}
\] 
whose determinant is non-zero, since $p\geq3$. Now the usual argument 
used for the previous algebras in this proof implies that 
%
$\F[\bft_8]_{t_5}/\big(\hat f_{1}, \hat f_{2}\big)$ satisfies Serre's condition $(R_{k})$ for all $k\geq0$. Hence 
$\F[\bft_8]_{t_5}/\big(\hat f_{1}, \hat f_{2}\big)$ and $Z_p(\g)_{x_5}[z_1,z_2]$ are integrally closed.
\end{proof}

\begin{proof}[The proof of Theorem~\ref{th:closures}]
  First note that Proposition~\ref{yhb} implies that  $K(\g)=K_p(\g)(z_1,\ldots,z_k)$ and so the fraction fields of $Z_p(\g)[z_1,\ldots,z_k]$, 
  $Z_p(\g)_{u}[z_1,\ldots,z_k]$ and $Z(\g)$ are all equal. Let us denote this field by $K$.
  For an extension $R\subseteq S$ of rings, let $R^S$ denote the integral closure of $R$ 
  in $S$. Then Lemma~\ref{lem:g55closed} implies that
  \[
    Z_p(\g)[z_1,\ldots,z_k]^K\subseteq  Z_p(\g)_{u}[z_1,\ldots,z_k]^K=Z_p(\g)_{u}[z_1,\ldots,z_k].
  \] 
  Thus the integral closure $Z_p(\g)[z_1,\ldots,z_k]^K$ is contained in 
  $Z_p(\g)_{u}[z_1,\ldots,z_k]$ and must be equal to 
  $Z_p(\g)[z_1,\ldots,z_k]^{Z_p(\g)_{u}[z_1,\ldots,z_k]}$. This proves the first statement. 

The extension $Z_p(\g)[z_1,\ldots,z_k]\subseteq Z(\g)$ is integral (inside $K$), and so the inclusion
$Z(\g)\subseteq  Z_p(\g)[z_1,\ldots,z_k]^K$ holds. Also, $Z_p(\g)[z_1,\ldots,z_k]^K\subseteq Z(\g)^K=Z(\g)$. 
Thus $Z_p(\g)[z_1,\ldots,z_k]^K=Z(\g)$.
\end{proof}

By Theorem~\ref{th:closures}, finding generators for $Z(\g_{5,5})$ requires finding 
generators for the integral closure of 
\[
  Z_p(\g_{5,5})[z_1,z_2]\cong \mathbb{F}[x_2,x_3,x_4,x_5,t_1,t_2]/\left(f_1,f_2\right).
\]
where
\begin{align*}
f_1&=t^{p}_{1}-x_{4}^{2}+2x_{3}x_{5}^p\\
f_2&=t_{2}^{p}-3x_{2}x_{5}^{2p}+3x_{3}x_{4}x_{5}^p-x_{4}^{3}
\end{align*}
The same theorem shows that there are generators  of the form $g/x_5^\ell$ where 
$g$ is an element of the ring $\F[x_1^p,x_2^p,x_3^p,x_4^p,x_5,z_1,z_2]$ and $\ell\geq 0$. 

The problem of explicitly calculating the integral closure of a commutative domain has
 been considered by several authors in computational commutative algebra; see for 
 instance~\cite{Vas}. In our context, the algorithm that seems most relevant is the one by 
 Singh and Swanson~\cite{SS} which is implemented in the computational algebra 
 system Macaulay2~\cite{M2}. We used the Macaulay2 implementation of the algorithm by 
 Singh and Swanson to calculate explicit generators for the integral closure of $Z_p(\g_{5,5})[z_1,z_2]$ in the cases when $\F=\F_5$ and $\F=\F_7$. 

Suppose first that $\F=\F_5$. The function {\tt integralClosure} of Macaulay2 gives that 
the integral closure of $Z_p(\g_{5,5})[z_1,z_2]$ (that is, $Z(\g_{5,5})$, see Section~\ref{sec:threealgs}) is generated by the following elements  as a $Z_p(\g_{5,5})[z_1,z_2]$-module:
\begin{align*}
  &\frac{{z_1}\,{z_2}+{x_4^5}}{{x_5}^{2}},\quad \frac{{z_1}^{4}-{z_1}\,{z_2}^{2}-2\,{x_4^5}\,{z_2}}{{x_5}^{4}},\quad \frac{{z_1}^{3}{z_2}-{x_4^5}\,{z_1}^{2}+2\,{z_2}^{3}}{{x_5}^{4}},\quad \frac{{z_1}^{3}+{z_2}^{2}}{{x_5}^{2}}\\
  & \frac{2\,{x_2^5}\,{x_5}^{5}{z_1}^{2}+{x_3^5}\,\
  {z_1}^{3}{z_2}-{x_3^5}\,{x_4^5}\,{z_1}^{2}+2\,{
  x_3^5}\,{z_2}^{3}}{{x_5}^{2}{z_1}\,{z_2}+{x_4^5
  }\,{x_5}^{2}},\quad 
  \frac{2\,{x_2^5}\,{x_5}^{4}{z_1}\,{z_2}-{x_3^5
      }^{2}{x_5}^{4}-{x_2^5}\,{x_4^5}\,{x_5}^{4}}{{z_1
      }^{4}-{z_1}\,{z_2}^{2}-2\,{x_4^5}\,{z_2}},\\
      &\frac{{z_1}^{4}{z_2}+2\,{x_4^5}\,{z_1}^{3}-2\,{z_1}\,{z_2}^{3}-{x_4^5}\,{z_2}^{2}}{{x_5}^{5}}.      
\end{align*}

If $\F=\F_7$, then we obtain the following generating set for $Z(\g_{5,5})$:
{\small
\begin{align*}
  &\frac{{z_1}^{6}-2\,{z_1}^{3}{z_2}^{2}+2\,{z_2}^{4
      }-2\,{x_4^7}\,{z_1}\,{z_2}}{{x_5}^{6}},\quad 
      \frac{{z_1}^{5}{z_2}+2\,{z_1}^{2}{z_2}^{3}+2\,{x_4^7}\,{z_1}^{3}+{x_4^7}\,{z_2}^{2}}{{x_5}^{6}},\\
      &\frac{{z_1}^{3}{z_2}-2\,{z_2}^{3}-3\,{x_4^7}\,{z_1}}{{x_5}^{4}},\quad \frac{{z_1}^{3}+{z_2}^{2}}{{x_5}^{2}},
      \quad \frac{{z_1}^{2}{z_2}-{x_4^7}}{{x_5}^{2}}\\
      &\frac{2\,{x_2^7}\,{x_5}^{7}{z_1}^{2}{z_2}-3\,{x_2^7}\,{x_4^7}\,{x_5}^{7}+2\,{x_3^7}\,{z_1}^{4}{z_2}^{2}-2\,{x_3^7}\,{z_1}\,{z_2}^{4}+2\,{x_3^7
      }\,{x_4^7}\,{z_1}^{2}{z_2}+{x_3^7}\,{x_4^7}^{2}}{{x_5}^{2}{z_1}^{3}{z_2}-2\,{x_5}^{2}{z_2}^{3
      }-3\,{x_4^7}\,{x_5}^{2}{z_1}},\\
      &\frac{2\,{z_1}^{9}+{z_1}^{6}{z_2}^{2}+2\,{z_1}^{3
      }{z_2}^{4}-{x_4^7}\,{z_1}^{4}{z_2}+2\,{z_2}^{6
      }-3\,{x_4^7}\,{z_1}\,{z_2}^{3}-{x_4^7}^{2}{z_1
      }^{2}}{{x_5}^{9}},\\ 
      &\frac{-{x_3^7}\,{z_1}^{7}{z_2}+{x_3^7}\,{z_1}^{
        4}{z_2}^{3}-{x_3^7}\,{x_4^7}\,{z_1}^{5}-2\,{x_3^7
        }\,{z_1}\,{z_2}^{5}-2\,{x_3^7}\,{x_4^7}\,{z_1}^{
        2}{z_2}^{2}-3\,{x_3^7}\,{x_4^7}^{2}{z_2}}{{x_5
        }\,{z_1}^{7}+{x_4^7}^{2}{x_5}},\\
        &\frac{{x_3^7}\,{z_1}^{8}+2\,{x_3^7}\,{z_1}^{5}
      {z_2}^{2}+2\,{x_3^7}\,{z_1}^{2}{z_2}^{4}+{x_3^7}\,{x_4^7}\,{z_1}^{3}{z_2}-{x_3^7}\,{x_4^7}\,{
      z_2}^{3}-{x_3^7}\,{x_4^7}^{2}{z_1}}{{x_5}\,{z_1
      }^{7}+{x_4^7}^{2}{x_5}}\\
      &\frac{2\,{x_2^7}\,{x_5}^{4}{z_1}^{2}{z_2}^{3}+2\,{x_3^7}^{2}{x_5}^{4}{z_1}^{3}+2\,{x_2^7}\,{x_4^7
      }\,{x_5}^{4}{z_1}^{3}+2\,{x_3^7}^{2}{x_5}^{4}
      {z_2}^{2}}{{z_1}^{7}+2\,{z_1}^{4}{z_2}^{2}-2\,{z_1
      }\,{z_2}^{4}+2\,{x_4^7}\,{z_1}^{2}{z_2}+2\,{
      x_4^7}^{2}},\\
      &\frac{-3\,{x_2^7}\,{x_5}^{7}{z_1}^{2}{z_2}-3\,{x_2^7}\,{x_4^7}\,{x_5}^{7}-3\,{x_3^7}\,{z_1}^{7}+{x_3^7}\,{z_1}^{4}{z_2}^{2}-{x_3^7}\,{z_1}\,{z_2}^{4}+{x_3^7}\,{x_4^7}\,{z_1}^{2}{z_2}+{x_3^7}\,{x_4^7}^{2}}{{x_5}^{4}{z_1}^{3}+{x_5}^{4
      }{z_2}^{2}},\\
      &\frac{{z_1}^{5}-{z_1}^{2}{z_2}^{2}+2\,{x_4^7}\,{z_2}}{{x_5}^{4}},\quad 
      \frac{{z_1}^{6}{z_2}-3\,{z_1}^{3}{z_2}^{3}+3\,{x_4^7}\,{z_1}^{4}-{z_2}^{5}-{x_4^7}\,{z_1}\,{z_2}^{2}}{{x_5}^{7}}.
\end{align*}}

\section{Polynomial invariants}\label{sec:pol_inv}

Let $\g$ be a finite-dimensional Lie algebra over a field $\F$. The symmetric
algebra $S(\g)$ is the quotient of the tensor algebra 
\[
  S(\g)=T(\g)/I
\]
where $I$ is the ideal generated by all the elements $xy-yx$ for $x,y\in\g$. The adjoint 
action of $\g$ on  $\g$ can be extended by the Leibniz rule to $S(\g)$, and hence $S(\g)$ can be considered as a $\g$-module. For $x\in\g$ and $f\in S(\g)$, we denote
by $x\cdot f$ the image of $f$ under the action of $x$. The algebra of polynomial invariants $S(\g)^\g$ (also denoted $P(\g)$ of $\g$ is defined 
as 
\[
  P(\g)=S(\g)^\g=\{f\in S(\g)\mid x\cdot f=0\mbox{ for all }x\in\g\}.
\]
Fixing a basis $\{x_1,\ldots,x_n\}$ for $\g$ we obtain that $S(\g)$ is isomorphic to 
the polynomial algebra $\F[x_1,\ldots,x_n]=\F[\bfx_n]$ and the action of $x_i$ on 
$\F[\bfx_n]$ can be expressed as 
$$
  x_i\cdot f=\sum_{j=1}^n[x_i,x_j]\frac{\partial f}{\partial x_j}
  \quad\mbox{for all}\quad f\in\F[\bfx_n]
$$
(see, for example, ~\cite[Chapter 3]{sw}).
Thus the algebra of polynomial invariants $\F[\bfx_n]^\g$ coincides with 
the algebra of polynomials $f\in\F[\bfx_n]$ that satisfy the partial differential equations 
\begin{equation}\label{eq:diff}
  \sum_{j=1}^n[x_i,x_j]\frac{\partial f}{\partial x_j}=0\quad\mbox{for all}\quad
  i\in\{1,\ldots,n\}.
\end{equation}

Suppose that $\F$ has characteristic $p$ and that $\g$ is a nilpotent Lie algebra with nilpotency class less than or equal to $p$. Choosing a basis $\{x_1,\ldots,x_n\}$ such 
that $\{x_{d+1},\ldots,x_n\}$ is a basis for the center $C(\g)$, we obtain that the algebra  
\[
P_p(\g)=\F[x_1^p,\ldots,x_d^p,x_{d+1},\ldots,x_n]
\]
is contained in 
$\F[\bfx_n]^\g$. Furthermore, $P_p(\g)\cong Z_p(\g)$ and 
$P_p(\g)$ plays the same role in $\F[\bfx_n]^\g$ that $Z_p(\g)$ plays
in $Z(\g)$. In particular, as in Theorem~\ref{lm}(5), the invariant algebra 
$\F[\bfx_n]^\g$ is an integral extension of $P_p(\g)$. In fact, $z^p\in P_p(\g)$ for all $z\in F[\bfx n]$.

\begin{theorem}\label{th:int-closed}
  If $\F$ is an arbitrary field, and $\g$ is a finite-dimensional Lie algebra over $\F$ with basis $\{x_1,\ldots,x_n\}$, then the algebra $\F[\bfx_n]^\g$ of polynomial invariants is  integrally closed in its field of fractions.
\end{theorem}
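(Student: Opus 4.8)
The plan is to recognize the invariant algebra $\F[\bfx_n]^\g$ as the common kernel of a family of derivations of the polynomial ring $\F[\bfx_n]$, and then to exploit the fact that a polynomial ring is integrally closed. I would set $R=\F[\bfx_n]$ and, for each $i$, let $D_i$ denote the operator $f\mapsto x_i\cdot f=\sum_{j=1}^n[x_i,x_j]\,\partial f/\partial x_j$; each $D_i$ is a derivation of $R$ since $[x_i,x_j]\in\g\subseteq R$, and by~\eqref{eq:diff} the invariant algebra is $\F[\bfx_n]^\g=\bigcap_{i=1}^n\ker(D_i\colon R\to R)$. Being a polynomial ring, $R$ is a unique factorization domain and hence integrally closed in its fraction field $K=\fracfield{R}$; this normality of $R$ is the only structural input the argument needs.

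First I would pass to the fraction field. Every derivation $D_i$ of the domain $R$ extends uniquely to a derivation of $K$ by the quotient rule, which I will still denote $D_i$. A short verification (using $D_i(1)=0$, valid in every characteristic, together with $0=D_i(aa^{-1})$ for $a\neq 0$) shows that $\ker(D_i\colon K\to K)$ is a \emph{subfield} of $K$. I would then put $F=\bigcap_{i=1}^n\ker(D_i\colon K\to K)$, an intersection of subfields and hence itself a subfield of $K$. Since the extended derivations agree with the $D_i$ on $R$, an element $f\in R$ is invariant if and only if it lies in $F$, so that
\[
  \F[\bfx_n]^\g=R\cap F.
\]

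The heart of the matter is the commutative-algebra claim that $R\cap F$ is integrally closed whenever $R$ is a normal domain and $F\subseteq K=\fracfield{R}$ is a subfield. To prove it I would set $L=\fracfield{R\cap F}$; from $R\cap F\subseteq F$ and the fact that $F$ is a field it follows that $L\subseteq F$, while $R\cap F\subseteq R$ gives $L\subseteq K$. Given $x\in L$ integral over $R\cap F$, it is a fortiori integral over $R$; as $x\in L\subseteq K$ and $R$ is integrally closed, normality forces $x\in R$, and since also $x\in L\subseteq F$ we conclude $x\in R\cap F$. This shows $\F[\bfx_n]^\g=R\cap F$ is integrally closed.

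I do not anticipate a serious obstacle: this is essentially the standard fact that the ring of constants of a derivation of a normal domain is again normal, here applied to a finite intersection of kernels. The only points needing care are that the $D_i$ really do extend to derivations of $K$ whose kernels there are fields (so that $F$ is a field and the inclusion $L\subseteq F$ is legitimate), and that one has the equality $\F[\bfx_n]^\g=R\cap F$ and not merely an inclusion. I would also remark that the argument uses neither nilpotency, nor the bound $\mbox{cl}(\g)\leq p$, nor positivity of the characteristic, in agreement with the generality of the statement.
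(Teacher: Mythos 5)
Your proof is correct and follows essentially the same route as the paper: both arguments extend the $\g$-action (equivalently, the derivations $D_i$) to the fraction field via the quotient rule, realize $\F[\bfx_n]^\g$ as the intersection of the normal polynomial ring with a subfield of $\F(\bfx_n)$, and then deduce normality from that of $\F[\bfx_n]$. The only cosmetic difference is that you intersect with the full field of constants $F$ while the paper intersects with $\fracfield{\F[\bfx_n]^\g}$; the mechanism is identical.
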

\begin{proof}
  Suppose that $S$ is the integral closure of 
  $\F[\bfx_n]^\g$ in $\mbox{Frac}(\F[\bfx_n]^\g)$. 
  As $\F[\bfx_n]^\g\subseteq \F[\bfx_n]$, $S$ is contained in the 
  integral closure of $\F[\bfx_n]$ which is itself. Thus 
  $S\subseteq \F[\bfx_n]$. 
  
  We claim that  
  \[
    \F[\bfx_n]^\g=    \mbox{Frac}(\F[\bfx_n]^\g)\cap \F[\bfx_n].
  \] The containment
  $\F[\bfx_n]^\g\subseteq  \mbox{Frac}(\F[\bfx_n]^\g)\cap \F[\bfx_n]$ is clear,
  and let us show the other direction that $\mbox{Frac}(\F[\bfx_n]^\g)\cap \F[\bfx_n] 
  \subseteq \F[\bfx_n]^\g$.   The $\g$-action on $\F[\bfx_n]$ can be extended to the 
  field $\K=\F(\bfx_n)$ of rational expressions in $x_1,\ldots,x_n$ using the quotient
  rule: 
  \[
    x\cdot\left(\frac{f}{g}\right)=\frac{(x\cdot f)g-f(x\cdot g)}{g^2}
  \]
  for $x\in\g$  and $f,g\in\F[\bfx_n]$ with $g\neq 0$. 
  Suppose that $f/g\in   \mbox{Frac}(\F[\bfx_n]^\g)\cap \F[\bfx_n]$; that is, 
  $f,g\in \F[\bfx_n]^\g$ and $g\mid f$. If $x\in \g$, then 
  \[
    x\cdot \frac{f}{g}=
    \frac{(x\cdot f)g-f(x\cdot g)}{g^2}=0, 
  \]
  and so $f/g\in \F[\bfx_n]^\g$.

It follows from the claim in the previous paragraph that
  \[
    S\subseteq \mbox{Frac}(\F[\bfx_n]^\g)\cap \F[\bfx_n]=\F[\bfx_n]^\g
  \]
  which implies that $\F[\bfx_n]^\g$ is integrally closed.
\end{proof}

If $\g$ is a finite-dimensional Lie algebra with basis $\{x_1,\ldots,x_n\}$, then 
we denote by $M(\g)$ the ``multiplication table'' for $\g$ with respect to the given 
basis. The $(i,j)$-entry of $M(\g)$ is the product $[x_i,x_j]$ considered as an
element of the field $\F(\bfx_n)$.

\begin{theorem}\label{th:diminv}
  If $\g$ is a nilpotent Lie algebra over a field of characteristic $p$ 
  with nilpotency class not greater than $p$. Then 
  \[
    \dim_{\F(\bfx_n)^\g}\F(\bfx_n)=p^{{\rm rank}\,M(\g)}.
  \]
\end{theorem}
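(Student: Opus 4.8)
The plan is to reinterpret the statement as an instance of Jacobson's Galois correspondence for purely inseparable field extensions of exponent one. Write $\K=\F(\bfx_n)$ and recall from Section~\ref{sec:pol_inv} that each basis vector $x_i$ acts on $\K$ as the $\F$-linear derivation $D_i=\sum_{j=1}^n[x_i,x_j]\,\partial/\partial x_j$, so that $\F(\bfx_n)^\g$ is exactly the field of constants $\K^{W}=\{f\in\K\mid Df=0\text{ for all }D\in W\}$ of the $\K$-subspace $W=\sum_{i=1}^n\K D_i$ of $\mbox{\rm Der}_\F(\K)$. Since $\{x_1,\ldots,x_n\}$ is a separating transcendence basis of $\K$ over $\F$, the module $\mbox{\rm Der}_\F(\K)$ is free of rank $n$ over $\K$ with basis $\partial/\partial x_1,\ldots,\partial/\partial x_n$, and in this basis the coordinate matrix of the family $D_1,\ldots,D_n$ is precisely $M(\g)$. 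Hence $\dim_\K W={\rm rank}\,M(\g)$, and the theorem reduces to proving $[\K:\K^W]=p^{\dim_\K W}$.

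To apply the correspondence I must show that $W$ is a finite-dimensional restricted $\K$-Lie subalgebra of $\mbox{\rm Der}_\F(\K)$. It is a $\K$-subspace by construction, and it is finite-dimensional. For closure under the Lie bracket, the adjoint action makes $x\mapsto D_x$ a homomorphism of Lie algebras into $\mbox{\rm Der}_\F(\K)$ (both $[D_i,D_j]$ and $D_{[x_i,x_j]}$ are derivations agreeing on the generators $x_1,\ldots,x_n$), so $[D_i,D_j]=D_{[x_i,x_j]}=\sum_k c_{ij}^k D_k\in W$, where the $c_{ij}^k$ are the structure constants; together with $[fD_i,gD_j]=fg[D_i,D_j]+fD_i(g)D_j-gD_j(f)D_i$ this gives $[W,W]\subseteq W$. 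For closure under $p$-th powers the hypothesis $\mbox{cl}(\g)\leq p$ is decisive: it forces $(\ad x_i)^p=0$ on $\g$, whence $D_i^p=0$ as a derivation of $\K$ (a derivation vanishing on the generators is zero). Combining $D_i^p=0$ with Jacobson's identities $(fD)^p=f^pD^p+\big((fD)^{p-1}f\big)D$ and $(D+E)^p=D^p+E^p+\sum_{i=1}^{p-1}s_i(D,E)$, whose correction terms $s_i$ are iterated brackets and hence lie in $W$, the standard argument shows that $D_i^p\in W$ for a $\K$-spanning set forces $D^p\in W$ for every $D\in W$.

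With $W$ established as a finite-dimensional restricted $\K$-subalgebra of $\mbox{\rm Der}_\F(\K)$, Jacobson's Galois correspondence applies directly and yields $\mbox{\rm Der}_{\K^W}(\K)=W$ together with $[\K:\K^W]=p^{\dim_\K W}$. Since $\K^W=\F(\bfx_n)^\g$ and $\dim_\K W={\rm rank}\,M(\g)$, this is exactly the asserted equality. As a consistency check that we are genuinely in the exponent-one finite setting, every $a\in\K$ satisfies $a^p\in\K^p\subseteq\fracfield{P_p(\g)}\subseteq\K^W$ (the central variables and the $p$-th powers $x_j^p$ being invariant), and $[\K:\fracfield{P_p(\g)}]=p^{\dim\g-\dim C(\g)}$ bounds $[\K:\K^W]$. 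I expect the verification that $W$ is closed under $p$-th powers to be the main obstacle, since it is there that the restriction $\mbox{cl}(\g)\leq p$ enters and Jacobson's $p$-power formulas must be invoked; identifying $\dim_\K W$ with the rank of $M(\g)$ and checking the bracket closure are routine by comparison.
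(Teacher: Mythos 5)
Your proposal is correct, and it reaches the conclusion by a more self-contained route than the paper. The paper's proof is a one-line citation of Braun's Proposition~3.3, observing only that $\mbox{cl}(\g)\leq p$ makes $\g$ a restricted Lie algebra (with trivial $p$-map) and the adjoint module a restricted module; the rank formula is then read off from Braun's general statement about fields of invariants of restricted representations. You instead unpack what is inside that black box: you realize $\F(\bfx_n)^\g$ as the field of constants of the $\K$-span $W$ of the derivations $D_i$, identify $\dim_\K W$ with ${\rm rank}\,M(\g)$ via the coordinate matrix in the basis $\partial/\partial x_1,\ldots,\partial/\partial x_n$, verify that $W$ is a finite-dimensional restricted $\K$-subalgebra of $\mbox{\rm Der}_\F(\K)$ (bracket closure from $[D_i,D_j]=D_{[x_i,x_j]}$, $p$-power closure from $D_i^p=0$ together with Hochschild's and Jacobson's formulas --- and you correctly locate the use of $\mbox{cl}(\g)\leq p$ here), and then invoke Jacobson's exponent-one Galois correspondence to get $[\K:\K^W]=p^{\dim_\K W}$. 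Your closing remark that $\K^p\subseteq\fracfield{P_p(\g)}\subseteq\K^W$ is more than a consistency check: it is what guarantees $[\K:\K^W]<\infty$ so that Jacobson's theorem applies. The trade-off is that the paper delegates everything to Braun (whose proposition is itself proved by essentially this Jacobson-type argument), while your version makes the mechanism visible and makes explicit why the rank of $M(\g)$, computed over $\F(\bfx_n)$, is the correct exponent; either is acceptable, and yours would serve as a proof of the cited proposition in the special case needed here.
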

\begin{proof}
  This follows at once from~\cite[Proposition~3.3]{braun}, noting that the conditions
  imply that $\g$ is a restricted Lie algebra and it is also a restricted $\g$-module
  under the adjoint representation.
\end{proof}

\section{The isomorphism between $Z(\g)$ and $P(\g)$}\label{sec:th2}

In this final section we prove the second main theorem of this paper.

\begin{theorem}\label{th2}
  Let $\g$ be a nilpotent Lie algebra of dimension at most~$6$ over a field 
  $\F$ of prime characteristic $p$ and suppose that $p$ is larger than the 
  nilpotency class of~$\g$. Then $Z(\g)\cong P(\g)$. 
\end{theorem}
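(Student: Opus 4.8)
The plan is to deduce the isomorphism from the explicit description of $Z(\g)$ already obtained in Theorem~\ref{th1} (and Theorem~\ref{th:closures}) by producing a \emph{parallel} description of $P(\g)$ and matching the two presentations, rather than by trying to show that the symmetrization map is multiplicative (which in general it is not in characteristic $p$). Write $n=\dim\g$ and $d=\dim\g-\dim C(\g)$, and let $z_1,\dots,z_k$ be the generators attached to $\g$ in Theorem~\ref{th1}. Set $A=Z_p(\g)[z_1,\dots,z_k]\subseteq Z(\g)$. On the invariant side I would introduce, for each $i$, the \emph{commutative shadow} $\zeta_i\in S(\g)=\F[\bfx_n]$ obtained by reading the table entry $z_i$ as an ordinary polynomial, and set $B=P_p(\g)[\zeta_1,\dots,\zeta_k]\subseteq P(\g)$. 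The goal is to establish $A\cong B$ and then to show that $Z(\g)$ and $P(\g)$ are the integral closures of $A$ and $B$ in their respective fraction fields; the isomorphism $A\cong B$ then lifts to fraction fields and carries one integral closure onto the other.

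The first step builds the isomorphism $A\cong B$. Since $\ch\F=p$ exceeds the nilpotency class, $Z_p(\g)$ and $P_p(\g)$ are both polynomial algebras in $n$ variables, and the assignment $x_i^p\mapsto x_i^p$ $(i\le d)$, $x_j\mapsto x_j$ $(j>d)$ defines an isomorphism $\theta\colon Z_p(\g)\to P_p(\g)$. I would then verify two facts already implicit in Section~\ref{sec:th1}: that each $\zeta_i$ lies in $P(\g)$ (checking the invariance equations~\eqref{eq:diff} is formally the same computation that shows $z_i\in Z(\g)$), and that $\zeta_i^{\,p}=\theta(z_i^p)$ in $P_p(\g)$. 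The latter holds because in each case $z_i$ is a sum of commuting monomials with $z_i^p$ equal to the sum of their $p$-th powers, while in the commutative ring $S(\g)$ the Frobenius gives $\zeta_i^{\,p}$ as the sum of the \emph{same} $p$-th powers of monomials; both land in the $p$-center and agree under $\theta$. By Lemma~\ref{corq} and Lemma~\ref{rr}, exactly as in Section~\ref{sec:simple_ext}, both $A$ and $B$ are presented as $\F[\bft_{n+k}]/(\hat f_1,\dots,\hat f_k)$ with \emph{identical} relations $\hat f_i=t_{n+i}^{\,p}-\hat\alpha_i$, so $\theta$ extends to an isomorphism $\Theta\colon A\to B$ with $z_i\mapsto\zeta_i$.

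The second step identifies $P(\g)$ with the integral closure $\overline B$ of $B$ in $\fracfield B$. Here $P(\g)$ is integrally closed by Theorem~\ref{th:int-closed}, and it is integral over $B$ because $S(\g)^p\subseteq P_p(\g)\subseteq B$, so every element of $P(\g)$ satisfies an equation $t^p-a=0$ with $a\in B$. It therefore suffices to prove $\fracfield B=\fracfield{P(\g)}$: granting this, the chain $B\subseteq P(\g)\subseteq\fracfield B$ with $P(\g)$ integral over $B$ and integrally closed forces $\overline B=P(\g)$. For the field equality I would compare degrees over $K_p(\g)=\fracfield{P_p(\g)}$. The isomorphism $\Theta$ transports the proper purely inseparable tower of the $z_i$ furnished by Proposition~\ref{yhb}, so that $[\fracfield B:K_p(\g)]=p^{k}$. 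On the other hand $\fracfield{P(\g)}\subseteq\F(\bfx_n)^\g$, and Theorem~\ref{th:diminv} together with $[\F(\bfx_n):K_p(\g)]=p^{d}$ gives $[\F(\bfx_n)^\g:K_p(\g)]=p^{\,d-\operatorname{rank}M(\g)}$; since $\fracfield{P(\g)}/K_p(\g)$ is purely inseparable (every $p$-th power lands in $K_p(\g)$), the inclusions $\fracfield B\subseteq\fracfield{P(\g)}\subseteq\F(\bfx_n)^\g$ force $\fracfield B=\fracfield{P(\g)}$ as soon as $k=d-\operatorname{rank}M(\g)$. Combining the two steps, $\Theta$ extends to $\fracfield A\cong\fracfield B$ and carries $\overline A$ onto $\overline B$; by Theorem~\ref{th1} (and Theorem~\ref{th:closures}) $\overline A=Z(\g)$, while $\overline B=P(\g)$, whence $Z(\g)\cong P(\g)$.

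The one genuinely non-formal input is the numerical identity $k=d-\operatorname{rank}M(\g)$. Recalling that $k=d-2m$, where $\dim_{K(\g)}D(\g)=p^{2m}$ (Theorem~\ref{lm} and Proposition~\ref{yhb}), this identity is equivalent to $\dim_{K(\g)}D(\g)=p^{\operatorname{rank}M(\g)}$; it matches the ``size'' of the noncommutative invariants $Z(\g)$ against that of the commutative invariants $P(\g)$, and is exactly the point where the characteristic-$p$ geometry of the adjoint action enters. I expect this to be the crux of the argument; for the algebras at hand it can be confirmed directly from the tabulated values of $k$, of $d$, and of $\operatorname{rank}M(\g)$, consistently with the explicit computations of Section~\ref{sec:th1}. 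Finally, the three exceptional algebras $\g_{5,5}$, $\g_{6,18}$, $\g_{6,25}$ need no separate treatment: since the argument compares $Z(\g)$ and $P(\g)$ as integral closures of the isomorphic rings $A$ and $B$, the fact that these closures strictly contain $A$ and $B$ is irrelevant to the conclusion.
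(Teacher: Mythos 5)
Your proposal is correct, and its engine is the same as the paper's: match the presentations $Z_p(\g)[z_1,\dots,z_k]\cong\F[\bft_{n+k}]/(\hat f_1,\dots,\hat f_k)\cong P_p(\g)[\zeta_1,\dots,\zeta_k]$, identify $Z(\g)$ and $P(\g)$ as the respective integral closures, and control the fraction fields by purely inseparable degree counts over $K_p(\g)$ and $L_p(\g)$. The difference is organizational: the paper argues case by case, and two of its cases use shortcuts that you replace by the uniform count $k=\dim\g-\dim C(\g)-\operatorname{rank}M(\g)$. For the algebras with $k=0$ the paper solves the invariance PDEs~\eqref{eq:diff} explicitly to get $P(\g)=P_p(\g)$, where you instead read off $\operatorname{rank}M(\g)=\dim\g-\dim C(\g)$ from the tables and conclude $\fracfield{P(\g)}=L_p(\g)$, hence $P(\g)=P_p(\g)$ by integral closedness of $P_p(\g)$ --- slicker, at the cost of leaning on Theorem~\ref{th:diminv} for every family (including the parametrized ones, where the rank is indeed $4$ for all $\varepsilon$ since the Pfaffian of the relevant block is nonzero in $\F(\bfx_n)$). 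For the standard filiform algebras $\g_{5,5}$ and $\g_{6,18}$ the paper invokes Ben-Shimol's observation that both $Z(\g)$ and $P(\g)$ sit inside $\F[x_1^p,x_2,\dots,x_n]$ with the same $\g$-action, obtaining literal equality $Z(\g)=P(\g)$; your transport of integral closures along $\Theta$ (which is exactly how the paper treats $\g_{6,25}$) gives only an isomorphism, but that is all the theorem asserts, and it spares you a separate argument. Two small points to make explicit in a write-up: the identity $\zeta_i^{\,p}=\theta(z_i^{\,p})$ does require the Jacobson correction terms to vanish (the summands of each $z_i$ generate a nilpotent Lie subalgebra of class $<p$ with pairwise zero brackets, as in the paper's Section~\ref{sec:simple_ext}), and the properness of the tower $L_p(\g)\subset L_p(\g)(\zeta_1)\subset\cdots$ should be taken from the isomorphism $\Theta$ applied to the tower of Proposition~\ref{yhb} rather than re-verified on the invariant side. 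With those noted, the argument is complete.
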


As in Section~\ref{sec:th1}, some easy calculations are omitted, but they can be found in~\cite{vanderlei}. 

\subsection{Some general remarks}
Suppose that $\{x_{1}, \ldots, x_{n}\}$ is a fixed basis of
$\mathfrak{g}$ such that 
$\{x_{k+1}, \ldots, x_{n}\}$ is a basis of $C(\mathfrak{g})$. 
Set  
\[
  P_p(\g)=\mathbb{F}[x_{1}^{p}, \ldots, x_{k}^{p}, x_{k+1}, \ldots, x_{n}].
\] 
We introduce the following additional notation:
\begin{align*}
\mathbb{F}[\mathbf{x}_{n}]&=\mathbb{F}[x_{1}, \ldots, x_{n}] &\text{and}& &\mathbb{F}(\mathbf{x}_{n})&=\mathbb{F}(x_{1}, \ldots, x_{n}),\\
P(\mathfrak{g})&=\mathbb{F}[x_{1}, \ldots, x_{n}]^{\mathfrak{g}} &\text{and}&  &L(\mathfrak{g})&=\mbox{Frac}\big(\mathbb{F}[x_{1}, \ldots, x_{n}]^{\mathfrak{g}}\big),\\
P_{p}(\mathfrak{g})&=\mathbb{F}[x_{1}^{p}, \ldots, x_{k}^{p}, x_{k+1}, \ldots, x_{n}] &\text{and}& &L_{p}(\mathfrak{g})&=\mathbb{F}(x_{1}^{p}, \ldots, x_{k}^{p}, x_{k+1}, \ldots, x_{n}).
\end{align*}
It follows that $L_{p}(\mathfrak{g})\subseteq \mathbb{F}(\mathbf{x}_{n})$ is
a finite, purely inseparable field extension (Theorem~\ref{Isaa1}) 
of degree $p^{k}$. 
Furthermore, $P_{p}(\mathfrak{g})\subseteq P(\mathfrak{g})$ is 
an integral extension of domains, as  $w^{p}\in P_{p}(\mathfrak{g})$ for all $w\in P(\mathfrak{g})$.

\subsection{Standard filiform algebras}
In the case of the standard filiform Lie algebras (that is, for $\g_{5,5}$ and 
$\g_{6,18}$) the isomorphism between $Z(\g)\cong P(\g)$ is 
observed in~\cite[Example 6.4]{ben-shimol}\label{lkj}. In fact,
the commutative algebra 
$\F[x_1^p,x_2,\ldots,x_n]$ can be naturally embedded into both $\F[\bfx_n]$ and into 
$U(\g)$ and the $\g$-action on $\F[x_1^p,x_2,\ldots,x_n]$ is identical 
whether it is considered as a subalgebra of  
$\F[\bfx_n]$ or $U(\g)$.
Taking $i=2$ in~\eqref{eq:diff}, we obtain that if $f\in P(\g)$, then 
$\partial f/\partial x_1=0$, and so $f\in \F[x_1^p,x_2,\ldots,x_n]$; that is,
$P(\g)\subseteq\F[x_1^p,x_2,\ldots,x_n]$. By Theorem~\ref{th1}, $Z(\g)\subseteq 
\F[x_1^p,x_2,\ldots,x_n]$, 
and hence we may actually write the equality $Z(\g)=P(\g)$.

\subsection{The case when $\boldsymbol{P(\g)=P_p(\g)}$}
Suppose that $\g$ is one of the algebras $\g_3$, $\mathfrak{g}_{5,1}$,
$\mathfrak{g}_{5,3}$, $\mathfrak{g}_{5,6}$, $\g_{6,7}^{(2)}(\varepsilon)$, $\mathfrak{g}_{6,22}(\varepsilon)$, $\mathfrak{g}_{6,23}$, $\mathfrak{g}_{6,24}(\varepsilon)$, $\mathfrak{g}_{6,27}$ or $\mathfrak{g}_{6,28}$. 
Theorem~\ref{th1} implies that $Z(\mathfrak{g})=Z_{p}(\mathfrak{g})$. 
As above, let $\{x_{1}, \ldots, x_{k}, x_{k+1}, \ldots, x_{n}\}$ 
be a basis of $\mathfrak{g}$ such that $\{x_{k+1}, \ldots, x_{n}\}$ 
is a basis for $C(\mathfrak{g})$. 

A polynomial $f\in \mathbb{F}[\mathbf{x}_{n}]$ lies in 
$f\in P(\mathfrak{g})$ if and only if $f$ satisfies the partial 
differential equations in~\eqref{eq:diff}.
Writing down the actual partial differential equations in all cases, one can show that
the system so obtained is equivalent to the system 
\[
  \frac{\partial f}{\partial x_i}=0\quad\mbox{for all}\quad i\in\{1,\ldots,k\} 
\] 
whose set of solutions is $P_p(\g)$. We check one case explicitly here, 
the others are even easier and are left to the reader (see also~\cite{vanderlei}).
Suppose that $\g=\g_{6,22}(\varepsilon)$ with $\varepsilon\in\F$. Then 
the differential equations in~\eqref{eq:diff} have the following form:
\begin{align}
\label{31} x_{5}\frac{\partial f}{\partial x_{2}}+x_{6}\frac{\partial f}{\partial x_{3}}&=0;\\
\label{32}-x_{5}\frac{\partial f}{\partial x_{1}}+\varepsilon x_{6}\frac{\partial f}{\partial x_{4}}&=0;\\
\label{33}-x_{6}\frac{\partial f}{\partial x_{1}}+x_{5}\frac{\partial f}{\partial x_{4}}&=0;\\
\label{34}-\varepsilon x_{6}\frac{\partial f}{\partial x_{2}}-x_{5}\frac{\partial f}{\partial x_{3}}&=0.
\end{align}
We are required to show that $\partial f/\partial x_i=0$ for all $i\in\{1,\ldots,4\}$. 
If $\varepsilon=0$, this follows at once.
If $\varepsilon\neq0$, we multiply the equations~(\ref{31}) and~(\ref{33}) by $x_{5}$ and multiply equations~(\ref{32}) and~(\ref{34}) by $x_{6}$ to obtain
\begin{align}\label{311} x_{5}^{2}\frac{\partial f}{\partial x_{2}}+x_{5}x_{6}\frac{\partial f}{\partial x_{3}}&=0;\\
\label{322}-x_{5}x_{6}\frac{\partial f}{\partial x_{1}}+\varepsilon x_{6}^{2}\frac{\partial f}{\partial x_{4}}&=0;\\
\label{333}-x_{5}x_{6}\frac{\partial f}{\partial x_{1}}+x_{5}^{2}\frac{\partial f}{\partial x_{4}}&=0;\\
\label{344}-\varepsilon x_{6}^{2}\frac{\partial f}{\partial x_{2}}-x_{5}x_{6}\frac{\partial f}{\partial x_{3}}&=0.
\end{align}
Now summing equations~(\ref{311}) and (\ref{344}) and subtracting 
equation~(\ref{322}) from equation~(\ref{333}) we obtain
\begin{equation*}
(x_{5}^{2}-\varepsilon x_{6}^{2})\frac{\partial f}{\partial x_{2}}=0 \quad 
\text{and} \quad (x_{5}^{2}-\varepsilon x_{6}^{2})\frac{\partial f}{\partial x_{4}}=0.
\end{equation*}
This implies that $\partial f/\partial x_{2}=\partial f/\partial x_{4}=0$, and 
then, substituting into equations~(\ref{31}) and~(\ref{32}), we also obtain that  
$\partial f/\partial x_{1}=\partial f/\partial x_{3}=0$.

\subsection{$P(\g)$ is a simple extension of $P_{p}(\mathfrak{g})$}
Suppose in this section that $\g$ is one of the algebras $\g_4$,
$\mathfrak{g}_{5,2}$, $\mathfrak{g}_{5,4}$, $\mathfrak{g}_{6,10}$, 
$\mathfrak{g}_{6,11}$, $\mathfrak{g}_{6,12}$, $\mathfrak{g}_{6,13}$, 
$\mathfrak{g}_{6,14}$, $\mathfrak{g}_{6,15}$, $\mathfrak{g}_{6,16}$, 
$\mathfrak{g}_{6,17}$, $\mathfrak{g}_{6,19}(\varepsilon)$, $\mathfrak{g}_{6,20}$, $\mathfrak{g}_{6,21}(\varepsilon)$ or $\mathfrak{g}_{6,26}$.
In each case, 
\[
  L_{p}(\mathfrak{g})\subseteq L(\mathfrak{g})\subseteq \mathbb{F}(\mathbf{x}_{n})
\]
with $\dim_{L_{p}(\mathfrak{g})}\mathbb{F}(\mathbf{x}_{n})=p^{\dim\mathfrak{g}-\dim C(\mathfrak{g})}$ and, also, by Theorem~\ref{th:diminv},  
$\dim_{L(\mathfrak{g})}\mathbb{F}(\mathbf{x}_{n})=p^{r(\mathfrak{g})}$ with $r(\mathfrak{g})=\mbox{rank}\,M(\g)$.

One can easily verify by inspection for each algebra that can occur for 
$\g$ that $\mbox{rank}\,M(\mathfrak{g})=\dim\mathfrak{g}-\dim C(\mathfrak{g})-1$, and so
\begin{align*}
p^{\dim\mathfrak{g}-\dim C(\mathfrak{g})}&=\dim_{L_{p}(\mathfrak{g})}\mathbb{F}(\mathbf{x}_{n})=\dim_{L(\mathfrak{g})}\mathbb{F}(\mathbf{x}_{n})
\cdot \dim_{L_{p}(\mathfrak{g})}L(\mathfrak{g})\\
&=p^{\dim \mathfrak{g}-\dim C(\mathfrak{g})-1}\cdot \dim_{L_{p}(\mathfrak{g})}L(\mathfrak{g}).
\end{align*}
Hence 
\begin{equation}\label{hj}
\dim_{L_{p}(\mathfrak{g})}L(\mathfrak{g})=p.
\end{equation}
Let $z$ be the element of $\F[\g]$ that occurs in the $z_i$-column of 
Table~\ref{table:Zgpgen} and consider $z$ as an element of $\F[\bfx_n]$. 
Then one may easily check that $z\in P(\g)$, but 
$z\not\in P_{p}(\mathfrak{g})$ and $z^{p}\in  P_{p}(\mathfrak{g})$. Since 
$L_{p}(\mathfrak{g})\subset \mathbb{F}(\mathbf{x}_{n})$ is purely inseparable, 
Corollary~\ref{cor:insep} shows that $L_{p}(\mathfrak{g})\subset L_{p}(\mathfrak{g})(z)$ is purely inseparable and  $\dim_{L_{p}(\mathfrak{g})}L_{p}(\mathfrak{g})(z)=p$. 
Using equation~(\ref{hj}),  we conclude that $\dim_{L_{p}(\mathfrak{g})}L(\mathfrak{g})=p$. Therefore 
$L_{p}(\mathfrak{g})(z)=L(\mathfrak{g})$. 
One may use the same argument that was used to show that 
$Z_p(\g)[z]\cong Z_p(\g)[t]/(t^p-z^p)$ in Section~\ref{sec:simple_ext} 
to prove that $P_{p}(\mathfrak{g})[z]\cong P_p(\g)[t]/(t^p-z^p)$ and so 
$P_p(\mathfrak{g})[z]$ is integrally closed. 
Since $P_{p}(\mathfrak{g})[z]\subseteq P(\mathfrak{g})$ is an integral extension, 
 $P(\mathfrak{g})=P_{p}(\mathfrak{g})[z]$, and also 
$Z(\mathfrak{g})\cong P(\mathfrak{g})$.

\subsection{The algebra $\boldsymbol{\mathfrak{g}_{6,25}}$} 
Let $\mathfrak{g}=\mathfrak{g}_{6,25}$. First notice that 
\[
  M(\g)=\begin{pmatrix} 0 & x_{3} & x_{5} & x_{6} & 0 & 0 \\ -x_{3} & 0 & 0 & 0 & 0 & 0\\ -x_{5} & 0 & 0 & 0 & 0 & 0\\ -x_{6} & 0 & 0 & 0 & 0 & 0\\ 0 & 0 & 0 & 0 & 0 & 0\\ 0 & 0 & 0 & 0 & 0 & 0 \end{pmatrix}
\]
and so $\dim_{L(\g)}\F(\bfx_n)=p^{\mbox{rank}\, M(\g)}=p^2$ (Theorem~\ref{th:diminv}). Furthermore, since $\dim C(\g)=2$, we also have that 
$\dim_{L_{p}(\mathfrak{g})}\mathbb{F}(\mathbf{x}_{n})=p^{\dim \g-\dim C(\g)}=p^4$. Thus we also obtain that 
$\dim_{L_{p}(\mathfrak{g})}L(\mathfrak{g})=p^{2}$.
Set
\[
  z_{1}=x_{3}^{2}-2x_{2}x_{5} \quad \text{and} \quad z_{2}=x_{3}x_{6}-x_{4}x_{5}
\]
and consider $z_1,z_2$ as elements of $\F[\bfx_n]$. 
One can easily check that $z_1,z_2\in P(\g)$. 

We claim that 
the fraction fields of $P(\mathfrak{g})$ and 
$P_{p}(\mathfrak{g})[z_{1}, z_{2}]$ coincide.
Note first that  $z_{1}\not\in P_{p}(\mathfrak{g})[z_{2}]$, 
since for $f\in P_{p}(\mathfrak{g})[z_{2}]$ we have that  
$\partial f/\partial x_{2}=0$, but $\partial z_{1}/\partial x_{2}=-2x_{5}$. Moreover,
\[
z_{1}\in P_{p}(\mathfrak{g})[x_{2}, x_{3}]\quad\mbox{and}\quad
z_{2}\in P_{p}(\mathfrak{g})[x_{3}, x_{4}].
\]
Hence $P_{p}(\mathfrak{g})[z_{1}]\subset P_{p}(\mathfrak{g})[z_{1}, z_{2}]$ is 
a proper inclusion of rings such that $L_{p}(\mathfrak{g})(z_{1})\subset 
L_{p}(\mathfrak{g})(z_{1}, z_{2})$.  
Observe for $i=1,2$ that $z_{i}\not\in L_{p}(\mathfrak{g})$ and $z_{i}^{p}\in L_{p}(\mathfrak{g})$. As $L_{p}(\mathfrak{g})\subset \mathbb{F}(\mathbf{x}_{n})$ is 
purely inseparable, we obtain from Corollary~\ref{cor:insep} that $L_{p}(\mathfrak{g})\subset L_{p}(\mathfrak{g})(z_{i})$ is also purely inseparable and 
$\dim_{L_{p}(\mathfrak{g})}L_{p}(\mathfrak{g})(z_{i})=p$. 
Since  $\dim_{L_{p}(\mathfrak{g})}L(\mathfrak{g})=p^{2}$,
it follows that $L(\mathfrak{g})=L_{p}(\mathfrak{g})(z_{1}, z_{2})$. 

Let us finally show that $P(\g)$ 
is the integral closure of $P_{p}(\mathfrak{g})[z_{1}, z_{2}]$ in its fraction
field.
Let $S$ be the integral closure of $P_{p}(\mathfrak{g})[z_{1}, z_{2}]$ in 
$L_{p}(\mathfrak{g})(z_{1}, z_{2})$. Let $y\in S$. 
Then $y$ is integral also over  $P(\mathfrak{g})$ and $y\in L(\mathfrak{g})$. 
The algebra $P(\mathfrak{g})$ is integrally closed (Theorem~\ref{th:int-closed}), and 
so $y\in P(\mathfrak{g})$. Thus $S\subseteq  P(\mathfrak{g})$. 
For the other inclusion, recall that $P_{p}(\mathfrak{g})[z_{1}, z_{2}]\subseteq P(\mathfrak{g})$ is an integral extension. Moreover,
 $P(\mathfrak{g})\subseteq L(\mathfrak{g})=L_{p}(\mathfrak{g})(z_{1}, z_{2})$. 
 Hence $P(\mathfrak{g})\subseteq S$, and so $P(\mathfrak{g})=S$.

 By the argument in the proof of Theorem~\ref{th1}, we have  
 \[
 Z_{p}(\mathfrak{g})[z_{1}, z_{2}]\cong P_{p}(\mathfrak{g})[z_{1}, z_{2}]
 \]
 since these domains can be presented by the same generators and relations. 
 Theorem~\ref{th1} implies that $Z(\mathfrak{g})$ is the integral closure of 
 $Z_{p}(\mathfrak{g})[z_{1}, z_{2}]$ in its fraction field. Thus,  $Z(\mathfrak{g})\cong P(\mathfrak{g})$.

\section{Tables}\label{sec:tables}

In this final section we present some tables that summarise information concerning the nilpotent Lie algebras of dimension at most $6$. 
Tables~\ref{table:dim5} and~\ref{table:dim6} contain the isomorphism types for such Lie algebras according to~\cite{cgs12,cgscorr}. 
The columns contain the following information.
\begin{description}
  \item[$\g$] The name of the Lie algebras as it appears in~\cite{cgs12}.
  \item[relations] The nonzero brackets between the basis elements.
  \item[$C(\g)$] Generators for the center of $\g$.
  \item[$\mbox{cl}(\g)$] The nilpotency class of $\g$.
  \item[$r(\g)$] The value of $r(\g)=\mbox{rank}\,M(\g)$ as defined in Section~\ref{sec:pol_inv}.
\end{description}

Some Lie algebras in Table~\ref{table:dim6} depend on a parameter $\varepsilon$. The possible isomorphisms between Lie algebras 
in the same parametric family with different parameters are described in~\cite{cgs12,cgscorr}.
Over fields of characteristic two, we only consider the Lie algebras $\g^{(2)}_{6,7}(\varepsilon)$ since the nilpotency class of the
others is bigger than two.

\begin{table}[h]
$$
  \begin{array}{|l|l|l|l|l|}
  \hline
  \g & \mbox{relations} & C(\g) & \mbox{cl}(\g) & r(\g)\\
  \hline\hline
  \g_3 & [x_{1}, x_{2}]=x_{3} & x_3 & 2 & 2\\
  \hline
  \g_4 & [x_{1}, x_{2}]=x_{3}, [x_{1}, x_{3}]=x_{4} & x_4 &  3 & 2\\
  \hline
  \g_{5,1} & [x_{1}, x_{2}]=x_{5}, [x_{3}, x_{4}]=x_{5} & x_5 & 2 & 4\\
  \hline
  \g_{5,2} & [x_{1}, x_{2}]=x_{4}, [x_{1}, x_{3}]=x_{5} & x_4,x_5 & 2 & 2\\
  \hline
  \g_{5,3} & [x_{1}, x_{2}]=x_{4}, [x_{1}, x_{4}]=x_{5}, [x_{2}, x_{3}]=x_{5} 
    & x_5 & 3 & 4 \\
    \hline
  \g_{5,4} & [x_{1}, x_{2}]=x_{3}, [x_{1}, x_{3}]=x_{4}, [x_{2}, x_{3}]=x_{5} 
  & x_4,x_5 & 3 & 2 \\
  \hline  
  \g_{5,5} & [x_{1}, x_{2}]=x_{3}, [x_{1}, x_{3}]=x_{4}, [x_{1}, x_{4}]=x_{5}
 & x_5  & 4 & 2\\
    \hline
  \g_{5,6} & [x_{1}, x_{2}]=x_{3}, [x_{1}, x_{3}]=x_{4}, [x_{1}, x_{4}]=x_{5}, [x_{2}, x_{3}]=x_{5} & x_5 & 4 & 4\\
  \hline
\end{array}
$$
\caption{The nilpotent Lie algebras of dimension at most 5}
\label{table:dim5}
\end{table}

  \begin{longtable}{|l|l|l|l|l|}
    \hline
    $\g$ & \mbox{relations} & $C(\g)$ &  $\mbox{cl}(\g)$ & $r(\g)$\\
    \hline\hline
    $\g^{(2)}_{6,7}(\varepsilon)$ & \makecell[l]{$[x_1,x_2]=x_5$, $[x_1,x_3]=x_6$, $[x_2,x_4]=\varepsilon x_6$,\\ {}$[x_3,x_4]=x_5+x_6$} & $x_5$, $x_6$ & 2 & 4\\
    \hline
    $\g_{6,10}$ & $[x_{1}, x_{2}]=x_{3}$, $[x_{1} , x_{3}]=x_{6}$, $[x_{4} , x_{5}]=x_{6}$ 
    & $x_{6}$ &  3 & 4 \\
    \hline
    $\g_{6,11}$ & \makecell[l]{$[x_{1},x_{2}]=x_{3}$, $[x_{1} , x_{3}]=x_{4}$, \\
      {}$[x_{1} , x_{4}]=x_{6}$, $[x_{2} , x_{3}]=x_{6}$, $[x_{2}, x_{5}]=x_{6}$} 
      & $x_6$ &  4 & 4\\
    \hline
    $\g_{6,12}$ & \makecell[l]{$[x_{1} , x_{2}]=x_{3}$, $[x_{1} , x_{3}]=x_{4}$, 
    $[x_{1}, x_{4}]=x_{6}$, \\{}
    $[x_{2} , x_{5}]=x_{6}$} & $x_6$ & 4 & 4\\
    \hline
    $\g_{6,13}$ & \makecell[l]{$[x_{1} , x_{2}]=x_{3}$, $[x_{1} , x_{3}]=x_{5}$, 
    $[x_{1} , x_{5}]=x_{6}$,\\{} $[x_{2}, x_{4}]=x_{5}$, $[x_{3}, x_{4}]=x_{6}$} & $x_6$ &  4 & 4\\
    \hline
    $\g_{6,14}$ & \makecell[l]{$[x_{1}, x_{2}]=x_{3}$, $[x_{1}, x_{3}]=x_{4}$, 
    $[x_{1}, x_{4}]=x_{5}$, \\{}$[x_{2}, x_{3}]=x_{5}$, $[x_{2}, x_{5}]=x_{6}$, 
    $[x_{3}, x_{4}]=-x_{6}$} & $x_6$ &  5 & 4\\
    \hline
    $\g_{6,15}$ & \makecell[l]{$[x_{1}, x_{2}]=x_{3}$, $[x_{1}, x_{3}]=x_{4}$, 
    $[x_{1}, x_{4}]=x_{5}$,\\{} 
    $[x_{1}, x_{5}]=x_{6}$, $[x_{2}, x_{3}]=x_{5}$, $[x_{2}, x_{4}]=x_{6}$} 
      & $x_6$ &  5 & 4 \\
      \hline
    $\g_{6,16}$ & \makecell[l]{$[x_{1}, x_{2}]=x_{3}$, $[x_{1}, x_{3}]=x_{4}$, 
    $[x_{1}, x_{4}]=x_{5}$, \\{}
    $[x_{2}, x_{5}]=x_{6}$, $[x_{3}, x_{4}]=-x_{6}$}
   & $x_6$ &  5 & 4\\
      \hline
    $\g_{6,17}$ & \makecell[l]{$[x_{1}, x_{2}]=x_{3}$, $[x_{1}, x_{3}]=x_{4}$, 
    $[x_{1}, x_{4}]=x_{5}$, \\{}$[x_{1}, x_{5}]=x_{6}$, $[x_{2}, x_{3}]=x_{6}$} & 
    $x_6$ & 5 & 4\\
    \hline
    $\g_{6,18}$ & \makecell[l]{$[x_{1}, x_{2}]=x_{3}$, $[x_{1}, x_{3}]=x_{4}$, 
    $[x_{1}, x_{4}]=x_{5}$, \\{}$[x_{1}, x_{5}]=x_{6}$} & $x_{6}$ & 5&2 \\
    \hline
    \makecell[l]{$\g_{6,19}(\varepsilon)$\\$\varepsilon\in\F^*$}
     &\makecell[l]{$[x_{1}, x_{2}]=x_{4}$, $[x_{1}, x_{3}]=x_{5}$, $[x_{1}, x_{5}]=x_{6}$,\\{} 
     $[x_{2}, x_{4}]=x_{6}$, $[x_{3}, x_{5}]=\varepsilon x_{6}$}
    &$x_{6}$&3 &4\\
    \hline
    $\g_{6,20}$ & \makecell[l]{$[x_{1} , x_{2}]=x_{4}$, $[x_{1}, x_{3}]=x_{5}$, 
    $[x_{1}, x_{5}]=x_{6}$,\\{} $[x_{2}, x_{4}]=x_{6}$} &$x_{6}$&3&4\\
    \hline
    \makecell[l]{$\g_{6,21}(\varepsilon)$\\ $\varepsilon\in\F^*$} & 
    \makecell[l]{$[x_{1}, x_{2}]=x_{3}$, $[x_{1}, x_{3}]=x_{4}$, $[x_{1}, x_{4}]=x_{6}$,\\{} $[x_{2}, x_{3}]=x_{5}$, $[x_{2}, x_{5}]=\varepsilon x_{6}$} 
    &$x_{6}$& 4&4\\
    \hline
    \makecell[l]{$\g_{6,22}(\varepsilon)$\\ 
    $\varepsilon\in\F$} & \makecell[l]{$[x_{1}, x_{2}]=x_{5}$, $[x_{1}, x_{3}]=x_{6}$, $[x_{2}, x_{4}]=\varepsilon x_{6}$,\\{} $[x_{3}, x_{4}]=x_{5}$} &$x_{5}$, $x_{6}$&2&4\\
    \hline
    $\g_{6,23}$ & \makecell[l]{$[x_{1}, x_{2}]=x_{3}$, $[x_{1}, x_{3}]=x_{5}$, $[x_{1}, x_{4}]=x_{6}$,\\{} $[x_{2}, x_{4}]=x_{5}$} &$x_{5}$, $x_{6}$&3&4\\
    \hline
    \makecell[l]{$\g_{6,24}(\varepsilon)$\\ $\varepsilon\in\F$}& 
    \makecell[l]{$[x_{1}, x_{2}]=x_{3}$, $[x_{1}, x_{3}]=x_{5}$, $[x_{1}, x_{4}]=\varepsilon x_{6}$,\\{} $[x_{2}, x_{3}]=x_{6}$, $[x_{2}, x_{4}]=x_{5}$} &$x_{5}$, $x_{6}$&3&4\\
    \hline
    $\g_{6,25}$ & $[x_{1}, x_{2}]=x_{3}$, $[x_{1}, x_{3}]=x_{5}$, $[x_{1}, x_{4}]=x_{6}$
    &$x_{5}$, $x_{6}$& 3&2\\
    \hline
    $\g_{6,26}$ & $[x_{1}, x_{2}]=x_{4}$, $[x_{1}, x_{3}]=x_{5}$, $[x_{2}, x_{3}]=x_{6}$
    &$x_{4}$, $x_{5}$, $x_{6}$&2&2\\
    \hline
    $\g_{6,27}$ & $[x_{1}, x_{2}]=x_{3}$, $[x_{1}, x_{3}]=x_{5}$, $[x_{2}, x_{4}]=x_{6}$ &$x_{5}$, $x_{6}$&3&4\\
    \hline
    $\g_{6,28}$ & \makecell[l]{$[x_{1}, x_{2}]=x_{3}$, $[x_{1}, x_{3}]=x_{4}$, $[x_{1}, x_{4}]=x_{5}$,\\{} $[x_{2}, x_{3}]=x_{6}$} &$x_{5}$, $x_{6}$&4&4\\
    \hline
  \caption{The nilpotent Lie algebras of dimension 6}
  \label{table:dim6}
  \end{longtable}

Table~\ref{table:Zgpgen} contains the generators used in Theorem~\ref{th1} for each Lie algebra $\g$ appearing in Tables~\ref{table:dim5}
and~\ref{table:dim6}.

\begin{longtable}{|l|l|}
  \hline
  $\g$ & the generators $z_i$ \\
  \hline\hline
  $\g_4$ & $x_3^2-2x_2x_4$ \\
  \hline
  $\g_{5,2}$ & $x_2x_5-x_3x_4$ \\
  \hline
  $\g_{5,4}$ & $x_3^2 + 2x_1x_5 - 2x_2 x_4$ \\
  \hline  
  $\g_{5,5}$ & \makecell[l]{$x_4^2-2x_3 x_5$,\\
  $3x_2 x_5^2 - 3x_3 x_4 x_5 + x_4^3$}\\
    \hline
    $\g_{6,10}$ & $x_{3}^{2}-2x_{2}x_{6}$ \\
    \hline
    $\g_{6,11}$ & $x_{4}^{2}+2x_{5}x_{6}-2x_{3}x_{6}$ \\
    \hline
    $\g_{6,12}$ & $x_{4}^{2}-2x_{3}x_{6}$ \\
    \hline
    $\g_{6,13}$ & $x_{5}^{3}-3x_{3}x_{5}x_{6}+3x_{2}x_{6}^{2}$ \\
    \hline
    $\g_{6,14}$ & $2x_{5}^{3}+3x_{4}^{2}x_{6}-6x_{3}x_{5}x_{6}-6x_{1}x_{6}^{2}$ \\
    \hline
    $\g_{6,15}$ & $x_{5}^{3}-3x_{4}x_{5}x_{6}+3x_{3}x_{6}^{2}$ \\
      \hline
    $\g_{6,16}$ & $x_{4}^{2}-2x_{1}x_{6}-2x_{3}x_{5}$ \\
      \hline
    $\g_{6,17}$ & $x_{5}^{2}-2x_{4}x_{6}$ \\
    \hline
    $\g_{6,18}$ & \makecell[l]{$x_4^2+2x_2x_6-2x_3x_5$,\\ 
            $x_5^2-2x_4x_6$,\\
            $x_5^3-3x_4x_5x_6+3x_3x_6^2$}\\
    \hline
    \makecell[l]{$\g_{6,19}(\varepsilon)$\\$\varepsilon\in\F^*$}
     &$x_{5}^{2}+\varepsilon x_{4}^{2}+2\varepsilon x_{1}x_{6}-2x_{3}x_{6}$\\
    \hline
    $\g_{6,20}$ & $x_{5}^{2}-2x_{3}x_{6}$ \\
    \hline
    \makecell[l]{$\g_{6,21}(\varepsilon)$\\ $\varepsilon\in\F^*$} & 
    $x_{5}^{2}+\varepsilon x_{4}^{2}-2\varepsilon x_{3}x_{6}$ \\
    \hline
    $\g_{6,25}$ & \makecell[l]{$x_{3}^{2}-2x_{2}x_{5}$,\\ $x_{3}x_{6}-x_{4}x_{5}$}\\
    \hline
    $\g_{6,26}$ & $x_{1}x_{6}-x_{2}x_{5}+x_{3}x_{4}$\\
    \hline  
    \caption{The generators of $Z(\g)$ over $Z_p(\g)$ in large enough
    characteristic}
    \label{table:Zgpgen}
\end{longtable}



\begin{thebibliography}{CdGS20}

\bibitem[Bra07]{braun2}
Amiram Braun.
\newblock Factorial properties of the enveloping algebra of a nilpotent {L}ie
  algebra in prime characteristic.
\newblock {\em J. Algebra}, 308(1):1--11, 2007.

\bibitem[Bra18]{braun}
Amiram Braun.
\newblock The center of the enveloping algebra of the {$p$}-{L}ie algebras
  {$\mathfrak{sl}_n$}, {$\mathfrak{pgl}_n$}, {$\mathfrak{psl}_n$}, when {$p$}
  divides {$n$}.
\newblock {\em J. Algebra}, 504:217--290, 2018.

\bibitem[BS13]{ben-shimol}
Oz~Ben-Shimol.
\newblock On {D}ixmier-{D}uflo isomorphism in positive characteristic-the
  classical nilpotent case.
\newblock {\em J. Algebra}, 382:203--239, 2013.

\bibitem[BV09]{braun-vernik}
Amiram Braun and Gil Vernik.
\newblock On the center and semi-center of enveloping algebras in prime
  characteristic.
\newblock {\em J. Algebra}, 322(5):1830--1858, 2009.

\bibitem[CdGS12]{cgs12}
Serena Cical\`o, Willem~A. de~Graaf, and Csaba Schneider.
\newblock Six-dimensional nilpotent {L}ie algebras.
\newblock {\em Linear Algebra Appl.}, 436(1):163--189, 2012.

\bibitem[CdGS20]{cgscorr}
Serena Cical\`o, Willem~A. de~Graaf, and Csaba Schneider.
\newblock Corrigendum to ``{S}ix-dimensional nilpotent {L}ie algebras''
  [{L}inear {A}lgebra {A}ppl. 436 (2012) 163--189].
\newblock {\em Linear Algebra Appl.}, 604:507--508, 2020.

\bibitem[dG07]{deg}
Willem~A. de~Graaf.
\newblock Classification of 6-dimensional nilpotent {L}ie algebras over fields
  of characteristic not 2.
\newblock {\em J. Algebra}, 309(2):640--653, 2007.

\bibitem[Dix96]{dixmier96}
Jacques Dixmier.
\newblock {\em Enveloping algebras}, volume~11 of {\em Graduate Studies in
  Mathematics}.
\newblock American Mathematical Society, Providence, RI, 1996.
\newblock Revised reprint of the 1977 translation.

\bibitem[dJ21]{vanderlei}
Vanderlei~Lopes de~Jesus.
\newblock {\em O centro das álgebras envolventes universais de álgebras de
  {L}ie nilpotentes em característica prima}.
\newblock PhD thesis, Universidade Federal de Minas Gerais, 2021.

\bibitem[DK94]{finitealgebra}
Yurij~A. Drozd and Vladimir~V. Kirichenko.
\newblock {\em Finite-dimensional algebras}.
\newblock Springer-Verlag, Berlin, 1994.
\newblock Translated from the 1980 Russian original and with an appendix by
  Vlastimil Dlab.

\bibitem[Duf77]{Duflo}
Michel Duflo.
\newblock Op\'{e}rateurs diff\'{e}rentiels bi-invariants sur un groupe de
  {L}ie.
\newblock {\em Ann. Sci. \'{E}cole Norm. Sup. (4)}, 10(2):265--288, 1977.

\bibitem[GS]{M2}
Daniel~R. Grayson and Michael~E. Stillman.
\newblock Macaulay2, a software system for research in algebraic geometry.
\newblock Available at http://www.math.uiuc.edu/Macaulay2/.

\bibitem[Isa09]{Isaacs94}
I.~Martin Isaacs.
\newblock {\em Algebra: a graduate course}, volume 100 of {\em Graduate Studies
  in Mathematics}.
\newblock American Mathematical Society, Providence, RI, 2009.
\newblock Reprint of the 1994 original.

\bibitem[Kem11]{Kemper09}
Gregor Kemper.
\newblock {\em A course in commutative algebra}, volume 256 of {\em Graduate
  Texts in Mathematics}.
\newblock Springer, Heidelberg, 2011.

\bibitem[Kon03]{Kontsevich}
Maxim Kontsevich.
\newblock Deformation quantization of {P}oisson manifolds.
\newblock {\em Lett. Math. Phys.}, 66(3):157--216, 2003.

\bibitem[Lan02]{Lang02}
Serge Lang.
\newblock {\em Algebra}, volume 211 of {\em Graduate Texts in Mathematics}.
\newblock Springer-Verlag, New York, third edition, 2002.

\bibitem[Mat89]{matsumura86}
Hideyuki Matsumura.
\newblock {\em Commutative ring theory}, volume~8 of {\em Cambridge Studies in
  Advanced Mathematics}.
\newblock Cambridge University Press, Cambridge, second edition, 1989.
\newblock Translated from the Japanese by M. Reid.

\bibitem[Oom09]{Ooms1}
Alfons~I. Ooms.
\newblock Computing invariants and semi-invariants by means of {F}robenius
  {L}ie algebras.
\newblock {\em J. Algebra}, 321(4):1293--1312, 2009.

\bibitem[Oom12]{Ooms2}
Alfons~I. Ooms.
\newblock The {P}oisson center and polynomial, maximal {P}oisson commutative
  subalgebras, especially for nilpotent {L}ie algebras of dimension at most
  seven.
\newblock {\em J. Algebra}, 365:83--113, 2012.

\bibitem[SF88]{strade98}
Helmut Strade and Rolf Farnsteiner.
\newblock {\em Modular {L}ie algebras and their representations}, volume 116 of
  {\em Monographs and Textbooks in Pure and Applied Mathematics}.
\newblock Marcel Dekker, Inc., New York, 1988.

\bibitem[SS09]{SS}
Anurag~K. Singh and Irena Swanson.
\newblock An algorithm for computing the integral closure.
\newblock {\em Algebra Number Theory}, 3(5):587--595, 2009.

\bibitem[Vas05]{Vas}
Wolmer Vasconcelos.
\newblock {\em Integral closure}.
\newblock Springer Monographs in Mathematics. Springer-Verlag, Berlin, 2005.
\newblock Rees algebras, multiplicities, algorithms.

\bibitem[ŠW14]{sw}
Libor \v{S}nobl and Pavel Winternitz.
\newblock {\em Classification and identification of {L}ie algebras}, volume~33
  of {\em CRM Monograph Series}.
\newblock American Mathematical Society, Providence, RI, 2014.

\bibitem[Zas54]{zassenhaus53}
Hans Zassenhaus.
\newblock The representations of {L}ie algebras of prime characteristic.
\newblock {\em Proc. Glasgow Math. Assoc.}, 2:1--36, 1954.

\end{thebibliography}

\end{document}